\newcounter{dummy}
\let\oldmathchoice\mathchoice
\def\dashint{\let\mathchoice\oldmathchoice\,\ThisStyle{\ensurestackMath{%
			\stackinset{c}{.2\LMpt}{c}{.5\LMpt}{\SavedStyle-}{%
				\SavedStyle\phantom{\int}}}%
		\setbox0=\hbox{$\SavedStyle\int\,$}\kern-\wd0}\int%
	\let\mathchoice\newmathchoice}
\newcommand{\R}{\mathbb{R}}
\newcommand{\N}{\mathbb{N}}
\newcommand{\wcon}{\rightharpoonup}
\newcommand{\e}{\varepsilon}
\newcommand{\Rtimes}{\mathbb{R}^{3\times 3}}
\newcommand{\Rntimesn}{\mathbb{R}^{n\times n}}
\newcommand{\Colonh}{:_h}
\newcommand{\Sym}{\operatorname{sym}}
\newcommand{\Skew}{\operatorname{skew}}
\newcommand{\divh}{\operatorname{div}_h}
\newcommand{\tr}[1]{\operatorname{tr}_{#1}}
\newcommand{\tw}{\tilde{w}}
\newcommand{\dist}{\operatorname{dist}}
\newcommand{\sym}{\operatorname{sym}}
\renewcommand{\L}{\mathcal{L}}
\newcommand{\myitem}[1][]{\item[#1]\refstepcounter{dummy}\def\@currentlabel{#1}}
\newtheorem{theorem}{Theorem}[section]
\newtheorem{lem}[theorem]{Lemma}
\newtheorem{Coro}[theorem]{Corollary}
\theoremstyle{definition}
\newtheorem{bem}[theorem]{Remark}
\numberwithin{equation}{section}
\titleformat{\section}{\normalfont\Large\bfseries}{\thesection}{1em}{}
\titleformat{\subsection}{\normalfont\large\bfseries}{\thesubsection}{1em}{}
\titleformat{\subsubsection}{\normalfont\normalsize\bfseries}{\thesubsubsection}{1em}{}
\titleformat{\paragraph}[runin]{\normalfont\normalsize\bfseries}{\theparagraph}{1em}{}
\titleformat{\subparagraph}[runin]{\normalfont\normalsize\bfseries}{\thesubparagraph}{1em}{}
\begin{document}
	\begin{titlepage}
		\title{Large Times Existence for Thin Vibrating Rods}
		\author{Helmut Abels\thanks{Fakult\"{a}t f\"{u}r Mathematik, Universit\"{a}t Regensburg, 93040 Regensburg, Germany, e-mail:
				helmut.abels@mathematik.uni-regensburg.de} \space and Tobias Ameismeier\thanks{Fakult\"{a}t f\"{u}r Mathematik, Universit\"{a}t Regensburg, 93040 Regensburg, Germany, e-mail:
				tobias.ameismeier@mathematik.uni-regensburg.de}}
	\end{titlepage}
	\maketitle
	\begin{abstract}
		We consider the dynamical evolution of a thin rod described by an appropriately scaled wave equation of nonlinear elasticity. Under the assumption of well-prepared initial data and external forces, we prove that a solution exists for arbitrarily large times, if the diameter of the cross section is chosen sufficiently small. The scaling regime is such that the limiting equations are linear. 
	\end{abstract}
	\noindent{\textbf{Key words:}} Wave equation, von K\'arm\'an equation, nonlinear elasticity, long time existence 
	
	\vspace{2mm}	
	
	\noindent{\textbf{AMS-Classification:}} Primary: 74B20, 
	Secondary:
	35L20, 
	35L70,  
	74K10 
	\section{Introduction}
	The starting point of the analysis of thin vibrating rods in this contribution is the nonlinear elastic energy
	\begin{equation*}
		\tilde{\mathcal{E}}^h(z) := \frac{1}{h^2} \int_{\Omega_h} W(\nabla z(x)) - f^h(x)\cdot(z(x) - x) dx,
	\end{equation*}
	where $z\in W^1_2 (\Omega_h; \R^3)$ is a deformation, $S\subset\R^2$ denotes the cross section of the rod, $\Omega_h := (0,L)\times hS$ for some $L>0$ is the scaled reference configuration of the rod and $f^h$ describes an external volume load. For convenience we rescale the energy to an $h$-independent reference domain $\Omega := \Omega_1$ and obtain
	\begin{equation*}
		\mathcal{E}^h(y) := \int_\Omega W(\nabla_h y(x)) - f^h(x) \cdot (y(x) - x^h) dx,
	\end{equation*}
	with $\nabla_h := (\partial_{x_1}, \frac{1}{h}\partial_{x_2}, \frac{1}{h}\partial_{x_3})^T$, $x^h :=(x_1, hx_2, hx_3)$ and $y\in W^1_2(\Omega; \R^3)$. The limit of this energy depends on the scaling property of $f^h$ and thus on the scaling of $\mathcal{E}^h$ with respect to $h \to 0$. An in depth analysis of the convergence properties in the sense of $\Gamma$-convergence can be found in \cite{MoraMueller2003, MoraMueller2004} and in case of a curved reference configuration see for instance \cite{Scardia2006, Scardia2009}. In case of periodic boundary conditions we refer to \cite{AmeismeierDiss}. 
	
	In detail energies of order $h^{2\alpha - 2}$ for $\alpha\geq 3$ correspond to $f^{h}$ being of order $h^\alpha$. The choice of $\alpha = 3$ and $\alpha>3$ leads to a von Kármán limiting energy and a linearised theory, respectively. Deformations of this scaling behaviour are close to a rigid motion. The limit energies for $\frac{1}{h^{2\alpha-2}}\mathcal{E}^{h}$ are derived as
	\begin{equation*}
	\mathcal{E}_\alpha(u,v_2,v_3,w,R') := \mathcal{I}_\alpha(u,v_2,v_3,w) - \int_0^L (R')^T \begin{pmatrix}
	f_2 \\ f_3
	\end{pmatrix} \cdot \begin{pmatrix}
	v_2\\v_3
	\end{pmatrix} dx_1,
	\end{equation*}
	where
	\begin{equation*}
	\mathcal{I}_{\alpha} (u,v_2,v_3,w) := \begin{cases}
	\displaystyle\frac{1}{2} \int_0^L Q^0(u_{,1} + \tfrac{1}{2}(v_{2,1}^2 + v_{3,1}^2), A_{,1}) dx_1, \qquad&\text{if } \alpha = 3,\\
	\displaystyle\frac{1}{2} \int_0^L Q^0(u_{,1}, A_{,1})dx_1, \qquad&\text{if } \alpha > 3.
	\end{cases}
	\end{equation*}
	Here $u, w\in H^1_{per}(0,L)$, $v\in H^2_{per}(0,L;\R^2)$ are the limits of appropriately scaled means of $y^{h}$ and $R'$ is the $2\times 2$-lower left submatrix of the limit of an approximating rotation.
	The matrix $A\in H^1_{per}(0,L; \Rtimes)$ is given by
	\begin{equation*}
	A = \begin{pmatrix}
	0 & -v_{2,1} & -v_{3,1}\\
	v_{2,1} & 0 & -w\\
	v_{3,1} & w & 0
	\end{pmatrix}.
	\end{equation*}
	Moreover, $Q^0\colon \R\times \Rtimes_{skew} \to [0,\infty)$ is defined by
	\begin{equation*}
	Q^0(t,F) := \min_{\varphi\in H^1(S,\R^3)} \int_S Q_3\Big(te_1 + F\begin{pmatrix}
	0 \\ x'
	\end{pmatrix}\Big| \varphi_{,2} \Big| \varphi_{,3}\Big) dx'
	\end{equation*}
	with $Q_3(G) := D^2 W(Id)[G,G]$, the quadratic form of linearised elasticity.
	
	In our work we study the case $\alpha\geq 4$ in the dynamic situation. The basic equations from continuum mechanics emerge from the balance of linear and angular momentum and the balance of energy. Formally the evolution preserves the total energy
	\begin{equation*}
		\int_\Omega \bigg(\frac{|\partial_t y|^2}{2} + W(\nabla_h y) - f^h\cdot y \bigg)dx
	\end{equation*}
	if $f^h$ is independent of $t$ and the Piola-Kirchhoff stress satisfies appropriate boundary conditions.
		
	According to the scaling behaviour of the $S$-means of $y^h$  we expect
	\begin{align*}
		y_1 - x_1 \sim h^{\alpha-1},\quad \begin{pmatrix} y_2\ \\ y_3 \end{pmatrix} \sim h^{\alpha-2} \qquad\text{ for } \alpha\geq 4.
	\end{align*}
	Moreover, the $\Gamma$-convergence results suggest $f^h\sim h^{\alpha}$. For simplicity we choose $f_1 \equiv 0$. For some results with normal and tangential forces we refer to \cite{MuellerLecumberry}. In order to balance the kinetic and elastic part of the total energy we rescale the time via $\tau = h t$. Hence, we obtain
	\begin{equation*}
		\mathcal{E}^h_{tot}(y) = h^{2\alpha - 2} \int_\Omega \frac{1}{h^{2\alpha - 4}}\frac{|\partial_\tau y|^2}{2} + \frac{1}{h^{2\alpha - 2}} W(\nabla_h y) - \frac{1}{h^\alpha} f^h \cdot \frac{1}{h^{\alpha-2}} y dx.
	\end{equation*}
	This leads to the scaled evolution equation with $g^h:= \frac{1}{h^\alpha} f^h$
	\begin{equation}
		\partial_\tau^2 y - \frac{1}{h^2} \divh(DW(\nabla_h y)) = h^{\alpha - 2} g^h\quad\text{ in }\Omega\times[0,T)
		\label{Equation_NonlinearEquationIntroduction}
	\end{equation}
	where $g^h \sim 1$ for $h\to 0$. Furthermore, we assume homogeneous Neumann boundary conditions on the outer surface, periodicity on the end faces of $\Omega$ and suitable initial conditions. For well-prepared initial data we are able to show that for any $T>0$ there exists an $h_0>0$ such that strong solutions exist on $(0,T)$ for all $h\in (0,h_0]$.
	
	In a first step we use the methods of \cite{Koch} in order to establish short time existence for $h>0$, where the time of existence $T>0$ depends on $h$. As the results are proved for a sufficiently smooth bounded domains, we can not directly apply the theorem. The periodic boundary conditions on the end faces of the rod and homogeneous Neumann boundary conditions on the outer surface however admit applying many arguments of the proofs. More details are given in Subsection \ref{subsection::MainResult} and the Appendix \ref{Appendix::ExistenceOfClassicalSolutionsForFixedH}. 
	
	The main difficulty is therefore to prove the existence of a uniform lower bound for $T$ with respect to $h\to 0$. As the global strategy is quite similar to the one used in \cite{AbelsMoraMueller}, we want to give a short overview on the main new difficulties of this work. Generally we use an energy method in order to obtain enough regularity bounds for the solution. With this we can then conclude for well-prepared initial data that the solution has to exist for some short time interval. Using a proof towards contradiction we obtain that for an arbitrary $T>0$ there exists some $h_0>0$ such that for all $h\in (0, h_0]$ the solution exists on $[0, T]$.
	
	In order to do so we use the classical energy ansatz and differentiate \eqref{Equation_NonlinearEquationIntroduction} with respect to $z=t,x_1$ and test with $\partial_t\partial_z y$ to obtain
	\begin{equation*}
		\frac{d}{dt}\frac{1}{2}\Big( \|\partial_t \partial_z y\|^2_{L^2(\Omega)} + \frac{1}{h^2}\Big(D^2 W(\nabla_h y)\partial_z \nabla_h y, \partial_z \nabla_h y\Big)_{L^2(\Omega)}\Big) = (\partial_z f, \partial_t \partial_{(t,x)} y)_{L^2(\Omega)} + \mathcal{R}
	\end{equation*}
	where $\mathcal{R}$ is some remainder term. We use the properties of $D^2 W$ to apply a Gronwall and absorption argument for the remainder. More precisely the following bound is crucial
	\begin{equation*}
		\bigg|\frac{1}{h^2}\Big(D^2 W(\nabla_h y) \nabla_h u, \nabla_h u\Big)_{L^2(\Omega)}\bigg| \geq C\bigg\|\frac{1}{h} \e_h(u)\bigg\|^2_{L^2(\Omega)}.
	\end{equation*}
	with $\e_h(u) :=\sym(\nabla_h u)$. Unfortunately, the boundary conditions do not prevent large rotations around the $x_1$-axis. Therefore the symmetric gradient can not bound the full gradient, which is reflected in an adapted Korn inequality for thin rods
	\begin{equation*}
		\|\nabla_h u\|_{L^2(\Omega)}\leq \frac{C_K}{h}\bigg( \|\e_h(u)\|_{L^2(\Omega)} + \bigg| \int_\Omega u \cdot x^\perp dx\bigg|\bigg)
	\end{equation*}
	where $x^\perp = (0, -x_3, x_2)^T$ and $C_K>0$ independent of $h>0$. We overcome this problem by using the balance of angular momentum for the solution to the non-linear equation.
	
	On the technical level the most difficulties arise from the fact that the cross section of the rod is two dimensional. As a result we need higher regularity bounds in order to obtain $\nabla_h^2 y\in L^\infty$. Due to the boundary conditions on the end faces of the rod, we can easily derive the equation in $x_1$ direction and in time. To obtain bounds on the second scaled gradient in $(x_2, x_3)$ direction we derive a lower dimensional system of the from
	\begin{equation*}
		\left\{
		\begin{aligned}
		-\frac{1}{h^2} \operatorname{div}_{x'}\big(D^2 W(Id)^\approx \nabla_{x'} \varphi(x_1, \cdot)\big) &= \tilde{g}(x_1, \cdot) &&\quad\text{ in } S,\\
		\frac{1}{h}\big(D^2 W(Id)^\approx \nabla_{x'} \varphi(x_1, \cdot)\big) \nu_{\partial S}\bigg|_{\partial S} &= g_N(x_1, \cdot) - a_N(x_1, \cdot)&&\quad\text{ on } \partial S
		\end{aligned}
		\right.
	\end{equation*}
	as we can not algebraically solve for the higher order terms. Here $D^2 W(Id)^\approx$ is a suitable restriction for a two dimensional system and $x_1\in [0,L]$ arbitrary.
%
%

	For a more general introduction to elasticity theory we refer to \cite{Antman} and for a different approach on the behaviour of thin rods using unfolding methods see \cite{Griso}. Furthermore we want to mention the results on convergence of minimizers in the dynamical setting for plates \cite{AbelsMoraMuellerConvergence} and shells \cite{QinYao}, where in the latter the large times existence is still an open problem to the best of the authors' knowledge.
	
	The structure of this contribution is as follows: In Section \ref{section::PreliminariesAndAuxiliaryResults} we introduce the notation and some needed auxiliary results. Most important some specific bounds for the strain energy density $W$ and Korn's inequality in thin rods. Section \ref{section::LargeTimeExistence} is devoted to the main result, which is stated in Section \ref{subsection::MainResult} and proven in Section \ref{subsec::ProofOfMainTheorem}. The analysis of the linearised system is done in Section \ref{subsec::UniformEstimatesForLinearisedSystem}. In the appendix we discuss the existence of classical solutions for fixed $h>0$.

         \smallskip

	\noindent{\textbf{Acknowledgements:} Tobias Ameismeier was supported by the RTG 2339 “Interfaces, Complex Structures,
		and Singular Limits” of the German Science Foundation (DFG). The support is gratefully acknowledged.}

\section{Preliminaries and Auxiliary Results}
\label{section::PreliminariesAndAuxiliaryResults}
\subsection{Notation}
\label{subsection::Notation}
The natural numbers without zero are denoted by $\N$ and $\N_0 := \N \cup\{0\}$. For any $n\in\N$ we denote the norm on $\R^n$, $\Rntimesn$ and the absolute value by $|.|$. With $L^p(M)$, $W^k_p(M)$ and $H^k(M) := W^k_2(M)$, for $p,k\in \N$, we denote the classical Lebesgue and Sobolev spaces for some bounded, open set $M\subset\R^n$. Throughout the paper $\mathcal{L}^n(V)$, $n\in\N$, denotes the space of all $n$-linear mappings $G\colon V^n \to\R$ for a vector space $V$. As common we will use the classical identification of $\mathcal{L}^1(\Rntimesn) = (\Rntimesn)'$ with $\Rntimesn$, i.e., $G\in \mathcal{L}^1(\Rntimesn)$ is identified with $A\in\Rntimesn$ such that
\begin{equation*}
G(X) = A : X \quad\text{ for all } X\in\Rntimesn
\end{equation*}
where $A:X = \sum_{i,j=1}^n a_{ij} x_{ij}$ is the standard inner product on $\Rntimesn$. Similarly, $G\in \mathcal{L}^2(\Rntimesn)$ is identified with $\tilde{G}\colon\Rntimesn\to\Rntimesn$ defined by
\begin{equation}
\tilde{G}X : Y = G(X,Y)\quad \text{ for all } X, Y\in\Rntimesn.
\label{IdentificationL2withTensor4Order}
\end{equation}  

In anticipation of the scaled Korn inequality stated in Section \ref{subsection::KornInequalityInThinRods} we introduce a scaled inner product on $\Rntimesn$
\begin{equation*}
A\Colonh B := \frac{1}{h^2} \Sym A : \Sym B + \Skew A : \Skew B
\end{equation*}
for all $A$, $B\in\Rntimesn$ and $h>0$. The corresponding norm is denoted by $|A|_h := \sqrt{A\Colonh A}$. For $W\in \mathcal{L}^d(\Rntimesn)$ we define the induced scaled norm by 
\begin{equation*}
|W|_h := \sup_{|A_j|_h \leq 1, j =\{1,\ldots, d\}} |W(A_1, \ldots, A_d)|
\end{equation*}
As $|A|_h \geq |A|_1 = |A|$ for all $A\in\Rntimesn$ it follows that $|W|_h \leq |W|_1 =: |W|$ for all $W\in \mathcal{L}^d(\Rntimesn)$ and $0 < h \leq 1$. 

The scaled $L^p$-spaces are defined as follows
\begin{equation*}
\|W\|_{L^p_h(U, \mathcal{L}^d(\Rntimesn))} = \|W\|_{L^p_h(U)} = \left( \int_U |W(x)|_h^p dx\right)^{\frac{1}{p}}
\end{equation*}
if $p\in [1, \infty)$, where $U\subset\R^d$ is measurable. Thus $\|W\|_{L^p_h(U; \mathcal{L}^d(\Rntimesn))} \leq \|W\|_{L^p(U;\mathcal{L}^d(\Rntimesn))}$ for all $0<h\leq 1$. 
The scaled norm for $f\in L^p(U, \Rntimesn)$ is defined in the same way
\begin{equation*}
\|f\|_{L^p_h(U,\Rntimesn)} = \|f\|_{L^p_h(U)} = \left( \int_U |f(x)|_h^p dx\right)^{\frac{1}{p}}
\end{equation*}
and the inequality holds the other way round
\begin{equation*}
\|f\|_{L^p_h(U; \Rntimesn)} \geq \|f\|_{L^p(U;\Rntimesn)}.
\end{equation*}
Throughout this work we denote by $S\subset\R^2$ a smooth domain and $\Omega_h := (0,L) \times hS \subset \R^3$ for $h \in (0,1]$ and $L>0$ some length in $\R$. As an abbreviation we will write $\Omega := \Omega_1$. We assume that $S$ satisfies $|S|=1$,
\begin{equation}
	\int_S x_2 x_3 dx' = 0
	\label{globalEqualitySecondMomentZero}
\end{equation}
and 
\begin{equation}
	\int_S x_2 dx' = \int_S x_3 dx' = 0
	\label{globalEqualityFirstMomentsZero}
\end{equation}
where $x' := (x_2, x_3)\subset \R^2$. This can always be achieved via a scaling, translation and rotation. Furthermore, we denote by $\nabla_h$ the scaled gradient defined as
\begin{equation}
	\nabla_h = \bigg(\partial_{x_1}, \frac{1}{h}\partial_{x_2}, \frac{1}{h}\partial_{x_3}\bigg)^T.
\end{equation}
The respective scaled and unscaled gradients in only $x':=(x_2, x_3)$ direction are denoted as follows
\begin{equation*}
	\nabla_{h, x'} := \bigg(\frac{1}{h}\partial_{x_2}, \frac{1}{h}\partial_{x_3}\bigg)^T \quad\text{ and }\quad \nabla_{x'} := \big(\partial_{x_2}, \partial_{x_3}\big)^T.
\end{equation*}
The standard notation $H^k(\Omega)$ and $H^k(\Omega; X)$ is used for $L^2$-Sobolev spaces of order $k\in\N$ with values in $\R$ and some space $X$, respectively. Moreover, we denote for $m\in\N$
\begin{align*}
H^m_{per}(\Omega) := \Big\{f\in H^m(\Omega) \;:\; \partial^\alpha_x f|_{x_1 = 0} = \partial^\alpha_x f|_{x_1 = L},\;\text{for all } |\alpha| \leq m-1 \Big\}.
\end{align*}
A subscript $(0)$ on a function space will always indicate that elements have zero mean value, e.g., for $g\in H^1_{(0)}(U)$ we have 
\begin{equation}
\int_U g(x) dx = 0
\end{equation}
where $U\subset\R^n$ is open and bounded. In various estimates we will use an anisotropic variant of $H^k(\Omega)$, as we will have more regularity in lateral direction. Therefore we define
\begin{align*}
H^{m_1, m_2} &(\Omega) := \Big\{u\in L^2(\Omega) \;:\; \partial_{x_1}^l \nabla_{x}^k u\in L^2(\Omega) \text{ for all } k=0,\ldots, m_1, l=0,\ldots,m_2\\
&\partial_{x_1}^q \partial_{x}^\alpha u\Big|_{x_1 = 0} = \partial_{x_1}^q\partial_{x}^\alpha u\Big|_{x_1 = L} \text{ for all } q = 0,\ldots, m_1, |\alpha|\leq m_2 \text{ with } q + |\alpha| \leq m_1 + m_2 -1 \Big\}
\end{align*}
where $m_1,m_2\in\N_0$, the inner product is given by
\begin{equation*}
(f,g)_{H^{m_1, m_2}(\Omega)} = \sum_{k=0,\ldots,m_1; l=0,\ldots m_2} \Big(\partial_{x_1}^l \nabla_x^k f, \partial_{x_1}^l \nabla_x^k g \Big)_{L^2(\Omega)}.
\end{equation*}
Furthermore we will use the scaled norms
\begin{align*}
\|A\|_{H^m_h(\Omega)} &:= \left(\sum_{|\alpha| \leq m} \|\partial^\alpha_x A\|_{L^2_h(\Omega)}^2\right)^{\frac{1}{2}}\\
\|B\|_{H^{m_1, m_2}_h(\Omega)} &:= \left(\sum_{k=0,\ldots,m_1; l=0,\ldots,m_2} \|\partial_{x_1}^l \nabla_x^k B\|^2_{L^2_h(\Omega)}\right)^{\frac{1}{2}}.
\end{align*}
for $A\in H^m(\Omega;\R^{n\times n})$ and $B\in H^{m_1,m_2}(\Omega;\R^{n\times n})$ and $n\in\N$. As an abbreviation we denote for $u\in H^k(\Omega;\R^3)$ the symmetric scaled gradient by $\e_h(u):=\Sym(\nabla_h u)$ and $\e(u) = \e_1(u) = \Sym(\nabla u)$.

The space of periodic functions can be defined in an equivalent way, which is in some situations more convenient 
\begin{equation*}
\tilde{H}^m_{per}(\Omega) := \Big\{f\in H^m_{loc}(\R\times\bar{S}) \;:\; f(x_1, x')= f(x_1 + L, x') \text{ almost everywhere}\Big\}
\end{equation*}
equipped with the standard $H^m(\Omega)$-norm. As the maps $f\mapsto f|_{\Omega}\colon \tilde{H}^m_{per}(\Omega) \to H^m_{per}(\Omega)$ and $f\mapsto f_{per}\colon H^m_{per}(\Omega) \to \tilde{H}^m_{per}(\Omega)$ are isomorphisms, we identify $\tilde{H}^m_{per}(\Omega)$ with $H^m_{per}(\Omega)$. With this definition we obtain immediately that $C^\infty_{per}(\Omega)$ is dense in $H^m_{per}(\Omega)$, because, as $S$ is smooth, there exists an appropriate extension operator and thus we can use a convolution argument. The following lemma provides the possibility to take traces for $u\in H^{0,1}(\Omega)$, more precisely
\begin{lem}
	The operator $\tr{a}\colon H^{0,1}(\Omega) \to L^2(S)$, $u\mapsto u|_{x_1=a}$ is well defined and bounded.
\end{lem}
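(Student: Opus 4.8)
The plan is to establish the trace estimate first for smooth periodic functions and then extend by density, exploiting the fact that the space $H^{0,1}(\Omega)$ controls the full $x$-gradient $\nabla_x u$ (all first derivatives in $x_1,x_2,x_3$) together with $u$ itself in $L^2(\Omega)$. Since $C^\infty_{per}(\Omega)$ is dense in $H^{0,1}(\Omega)$ (this follows from the extension-operator-plus-convolution argument sketched above, using smoothness of $S$), it suffices to prove the inequality
\begin{equation*}
\|u|_{x_1=a}\|_{L^2(S)} \leq C\,\|u\|_{H^{0,1}(\Omega)}
\end{equation*}
for all $u\in C^\infty_{per}(\Omega)$, with $C$ independent of $u$ (and here also independent of $a\in[0,L]$, though that is not required for the statement).

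For the core estimate I would use the standard one-dimensional trace/fundamental-theorem-of-calculus trick applied in the $x_1$ variable, fibrewise over $S$. Fix $x'\in S$ and a point $b\in[0,L]$; then for any $x_1\in[0,L]$,
\begin{equation*}
|u(a,x')|^2 = |u(x_1,x')|^2 - \int_{x_1}^{a} \partial_{x_1}\big(|u(s,x')|^2\big)\,ds = |u(x_1,x')|^2 - 2\int_{x_1}^{a} u(s,x')\,\partial_{x_1}u(s,x')\,ds.
\end{equation*}
Averaging this identity over $x_1\in(0,L)$ and then integrating over $x'\in S$, one obtains
\begin{equation*}
\|u(a,\cdot)\|_{L^2(S)}^2 \leq \frac{1}{L}\|u\|_{L^2(\Omega)}^2 + 2\|u\|_{L^2(\Omega)}\|\partial_{x_1}u\|_{L^2(\Omega)} \leq C\big(\|u\|_{L^2(\Omega)}^2 + \|\partial_{x_1}u\|_{L^2(\Omega)}^2\big),
\end{equation*}
using the Cauchy--Schwarz and Young inequalities. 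Since $\|u\|_{L^2(\Omega)}^2 + \|\partial_{x_1}u\|_{L^2(\Omega)}^2 \leq \|u\|_{H^{0,1}(\Omega)}^2$, this is exactly the desired bound on the dense subspace.

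To conclude, I would invoke density: given $u\in H^{0,1}(\Omega)$, pick $u_k\in C^\infty_{per}(\Omega)$ with $u_k\to u$ in $H^{0,1}(\Omega)$; the estimate above shows $(\tr{a}u_k)_k$ is Cauchy in $L^2(S)$, hence converges to some limit, and a routine argument shows this limit is independent of the approximating sequence and agrees with the pointwise restriction when $u$ is continuous, so $\tr{a}u$ is well defined with $\|\tr{a}u\|_{L^2(S)}\leq C\|u\|_{H^{0,1}(\Omega)}$; linearity is immediate. The only mildly delicate point — and the place I expect the most care is needed — is justifying that $C^\infty_{per}(\Omega)$ is indeed dense in $H^{0,1}(\Omega)$ in the relevant (anisotropic, periodic) topology, i.e. that the extension operator across $\partial S$ and the convolution/mollification in $x_1$ respect both the periodicity condition built into $H^{0,1}(\Omega)$ and the anisotropic norm; but this is essentially the same construction already used for $H^m_{per}(\Omega)$, so it presents no genuinely new difficulty.
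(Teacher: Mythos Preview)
Your proof is correct, but note a minor misstatement in the opening sentence: by the paper's convention for $H^{m_1,m_2}(\Omega)$, the first index $m_1$ counts full gradients $\nabla_x^k$ and the second index $m_2$ counts additional $\partial_{x_1}$-derivatives, so $H^{0,1}(\Omega)$ controls only $u$ and $\partial_{x_1}u$ in $L^2$, \emph{not} the full gradient. Fortunately your actual argument uses only $\partial_{x_1}u$, so this slip is harmless.

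The paper's proof is a single line: it identifies $H^{0,1}(\Omega) = H^1(0,L;L^2(S))$ and invokes the standard one-dimensional vector-valued Sobolev embedding $H^1(0,L;X)\hookrightarrow BUC([0,L];X)$ with $X=L^2(S)$. Your fundamental-theorem-of-calculus computation followed by density is precisely the hands-on proof of that embedding, so the two routes coincide in content; the paper's is shorter because it quotes the embedding as known, while yours is self-contained and makes the constant explicit. The density step you flag as ``mildly delicate'' is in fact immediate here, since mollification need only be done in the periodic $x_1$-variable (no extension across $\partial S$ is required for the $H^{0,1}$-norm, which involves no $x'$-derivatives).
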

\begin{proof}
	This is an immediate consequence of the embedding
	\begin{equation*}
	H^{0,1}(\Omega) = H^1(0,L;L^2(S)) \hookrightarrow BUC([0,L];L^2(S))
	\end{equation*}
	where $BUC([0,L];X)$ is the space of all uniformly continuous functions $f\colon [0,L] \to X$ for some Banach space $X$. 
\end{proof}

\subsection{The Strain Energy Density $W$}
\label{subsection:StrainEnergyDensityW}
We investigate the mathematical assumptions and resulting properties of the strain-energy density $W$ in three dimensions. Assume $W\colon \Rtimes\to [0,\infty]$ satisfies the following conditions:
\begin{enumerate}
	\item[(i)]{$W\in C^\infty(B_\delta(Id); [0,\infty))$ for some $\delta >0$;}
	\item[(ii)] {$W$ is frame-invariant, i.e., $W(RF) = W(F)$ for all $F\in\Rtimes$ and $R\in SO(3)$;}
	\item[(iii)] {there exists $c_0 > 0$ such that $W(F) \geq c_0 \dist^2(F, SO(3))$ for all $F\in\Rtimes$ and $W(R) = 0$ for every $R\in SO(3)$.}
\end{enumerate}
\begin{bem} \label{RemarkdPropertiesOfElasticEnergyDensity}
	First of all we note that $W$ has a minimum at the identity, as $W(Id) = 0$ and $W(F)\geq 0$ for all $F\in\Rtimes$. Hence, we have $DW(Id)[G] = 0$ for all $G\in\Rtimes$. With the frame invariance symmetry of the Piola-Kirchhoff stress follows, i.e., for all $F\in\Rtimes$: $DW(F)F^T = F DW(F)^T$. Moreover, using the frame invariance we can deduce
%
%
	\begin{equation}
	D^2W(Id) [G, G] = D^2W(Id) [\sym(G), \sym(G)] \geq c_1 |\sym(G)|^2
	\label{globalInequalityCoercivityOfDW}
	\end{equation}
	for some $c_1>0$ and all $G\in\Rtimes$.
\end{bem}
\begin{bem}
	Using the identification \eqref{IdentificationL2withTensor4Order}, we can find $B^{\alpha \beta} = (b^{\alpha\beta}_{ij})_{i,j=1,2,3}\in \Rtimes$, for $\alpha, \beta\in \{1,2,3\}$ such that
	\begin{equation*}
	D^2W(Id) [X, Y] = D^2W(Id)X : Y = \sum_{\alpha,\beta, i, j =1}^3 b^{\alpha\beta}_{ij} x_{i\alpha}y_{j\beta}
	\end{equation*}
	for all $X, Y\in\Rtimes$. Therefore 
	\begin{equation*}
	D^2W(Id) [X] \nu = (D^2W(Id)X)\nu = \bigg(\sum_{\alpha,\beta, i, j =1}^3 b^{\alpha}_{ij} x_{i\alpha}\nu_{j}\bigg)_{\beta=1,2,3}
	\end{equation*}
	for $X\in\Rtimes$ and $\nu\in\R^3$. Hence, we obtain with \eqref{globalInequalityCoercivityOfDW} that $b^{\alpha\beta}_{ij} = b^{ji}_{\beta\alpha}$. In order to see this we choose $X = e_i\otimes e_\alpha - e_\alpha\otimes e_i$, $Y = e_j\otimes e_\beta$ and $X = e_i\otimes e_\alpha$, $Y=e_j\otimes e_\beta  - e_\beta\otimes e_j$, respectively. Thus either $\operatorname{sym}(X)= 0$ or $\operatorname{sym}(Y) = 0$ and with the symmetry property of Remark \ref{RemarkdPropertiesOfElasticEnergyDensity} it follows 
	\begin{equation}
	0 = D^2W(Id)[X, Y] = b^{\alpha\beta}_{ij} - b^{\beta i}_{\alpha j} = b^{\beta\alpha}_{ij} - b^{j\alpha}_{i\beta}.
	\label{EquationIndexSwichOfSystemOperator}
	\end{equation}
	For later use we introduce
	\begin{equation*}
	(D^2W(Id))^\approx := (B^{\alpha\beta})_{\alpha=2,3}^{\beta=2,3}.
	\end{equation*}
\end{bem}

Let $\tilde{W}\colon\Rtimes\to [0,\infty]$ be defined by $\tilde{W}(F) := W(Id+F)$. The results of Remark \ref{RemarkdPropertiesOfElasticEnergyDensity} therefore hold for $\tilde{W}$ as well, i.e., 
\begin{equation*}
D^2\tilde{W}(0) [G, G] = D^2\tilde{W}(0) [\e(G), \e(G)] \geq c_1 |\e(G)|^2
\end{equation*}
and 
\begin{equation*}
D^2\tilde{W}(0) [a\otimes b, a\otimes b] \geq c |a|^2 |b|^2 \quad\text{for all } a, b\in\R^3.
\end{equation*}

The following lemma provides an essential decomposition of $D^3\tilde{W}$.
\begin{lem}
	There is some constant $C>0$, $\e >0$ and $A\in C^\infty(\overline{B_\e(0)}; \mathcal{L}^3(\Rntimesn))$ such that for all $G\in\Rntimesn$ with $|G| \leq\e$ we have
	\begin{equation*}
	D^3\tilde{W}(G) = D^3\tilde{W}(0) + A(G)
	\end{equation*}
	where
	\begin{alignat}{2}
	|D^3\tilde{W}(0)|_h &\leq Ch &\qquad & \text{for all } 0<h\leq 1, \label{Lemma2.6_1}\\
	|A(G)| &\leq C|G| &&\text{for all } |G|\leq\e.\label{Lemma2.6_2}
	\end{alignat}
	\label{Lemma_DecompD3TildeW}
\end{lem}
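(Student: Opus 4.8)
The statement asks for a first-order Taylor-type decomposition of $D^3\tilde W$ around $0$, with two quantitative bounds: the constant term $D^3\tilde W(0)$ is small in the scaled norm $|\cdot|_h$ (order $h$), and the remainder $A(G)$ is Lipschitz-controlled by $|G|$. The plan is to obtain the decomposition from the smoothness of $W$ near the identity, then prove the two bounds separately, the second being a routine consequence of $W\in C^\infty$ and the first requiring the structural identities from frame invariance.

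First I would define $A(G) := D^3\tilde W(G) - D^3\tilde W(0)$ on a ball $\overline{B_\e(0)}$ with $\e>0$ chosen so that $\mathrm{Id}+B_\e(0)\subset B_\delta(\mathrm{Id})$, where $\delta$ is the radius from assumption (i). Since $W\in C^\infty(B_\delta(\mathrm{Id});[0,\infty))$, the map $\tilde W(F)=W(\mathrm{Id}+F)$ is $C^\infty$ on $B_\delta(0)$, hence $D^3\tilde W\in C^\infty(\overline{B_\e(0)};\mathcal L^3(\Rntimesn))$ and so is $A$; in particular $D^4\tilde W$ is bounded on the compact set $\overline{B_\e(0)}$ by some $C>0$. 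Then \eqref{Lemma2.6_2} follows from the mean value inequality, $|A(G)| = |D^3\tilde W(G)-D^3\tilde W(0)| \le \big(\sup_{\overline{B_\e(0)}}|D^4\tilde W|\big)\,|G| \le C|G|$, using that $|\cdot|\le|\cdot|_h$ is irrelevant here since the unscaled norm suffices for \eqref{Lemma2.6_2}.

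The substantive point is \eqref{Lemma2.6_1}: why is $|D^3\tilde W(0)|_h \le Ch$? By definition of the scaled operator norm, I must show that $D^3\tilde W(0)[A_1,A_2,A_3]$ is small whenever each $A_j$ has $|A_j|_h\le 1$, i.e. whenever $|\Sym A_j|\le h$ and $|\Skew A_j|\le 1$. The key is that differentiating the frame-invariance identity $\tilde W(RF)=\tilde W((\mathrm{Id}+F)$ adjusted appropriately — more precisely, the symmetry of the Piola–Kirchhoff stress and its consequences as in Remark \ref{RemarkdPropertiesOfElasticEnergyDensity} and equation \eqref{EquationIndexSwichOfSystemOperator} — forces $D^3\tilde W(0)$ to vanish whenever one of its arguments is skew-symmetric. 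Concretely, from frame invariance one derives that $D^k\tilde W(0)$ evaluated on $k$ copies only sees the symmetric parts (this is the content of $D^2\tilde W(0)[G,G]=D^2\tilde W(0)[\e(G),\e(G)]$, extended to third order by differentiating the rotation identity once more). Granting this, write each $A_j = \Sym A_j + \Skew A_j$ and expand $D^3\tilde W(0)[A_1,A_2,A_3]$ multilinearly into $8$ terms; every term containing at least one $\Skew A_j$ drops out, leaving only $D^3\tilde W(0)[\Sym A_1,\Sym A_2,\Sym A_3]$, which is bounded by $|D^3\tilde W(0)|\cdot|\Sym A_1|\,|\Sym A_2|\,|\Sym A_3| \le |D^3\tilde W(0)|\,h^3 \le Ch$ since $h\le 1$. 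Taking the supremum over admissible $A_j$ gives \eqref{Lemma2.6_1}.

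The main obstacle is establishing rigorously that $D^3\tilde W(0)$ annihilates skew arguments. For $D^2$ this is stated in the excerpt; for $D^3$ one should differentiate the identity $W(RF)=W(F)$ three times, or equivalently differentiate the Piola–Kirchhoff symmetry relation $DW(F)F^T = F\,DW(F)^T$ twice and evaluate at $F=\mathrm{Id}$, carefully tracking which contractions with a skew matrix appear. I would set $R=R(t)=\exp(tW_0)$ for skew $W_0$, use $\frac{d}{dt}\big|_{t=0}$ repeatedly on $t\mapsto \tilde W((\mathrm{Id}+tW_0)(\mathrm{Id}+sF))$ or a similar one-parameter family, and read off that the trilinear form vanishes as soon as one slot is skew. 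Once that algebraic fact is in hand, the rest of the argument is the short multilinear expansion described above, so I expect the proof to be quite short after that lemma-within-the-proof is settled.
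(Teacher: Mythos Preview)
Your decomposition $A(G):=D^3\tilde W(G)-D^3\tilde W(0)$ and the mean-value argument for \eqref{Lemma2.6_2} are fine. The gap is in your proof of \eqref{Lemma2.6_1}: the claim that $D^3\tilde W(0)$ vanishes whenever \emph{one} of its arguments is skew is false. Differentiating the frame-invariance identity $DW(F)[SF]=0$ twice in $F$ and evaluating at $F=\mathrm{Id}$ gives the reduction formula
\[
D^3\tilde W(0)[S,G,H] \;=\; -\,D^2\tilde W(0)[G,SH]\;-\;D^2\tilde W(0)[H,SG]
\qquad (S\in\Rntimesn_{\mathrm{skew}}),
\]
and the right-hand side is in general nonzero: for the isotropic (Lam\'e) density with $D^2W(\mathrm{Id})[A,B]=\lambda\,\mathrm{tr}A\,\mathrm{tr}B+2\mu\,\Sym A:\Sym B$, taking $S=e_1\otimes e_2-e_2\otimes e_1$, $G=e_1\otimes e_2$, $H=e_1\otimes e_1$ yields $D^3\tilde W(0)[S,G,H]=\lambda+\mu$. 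So the eight-term expansion does \emph{not} collapse to the purely symmetric term, and your bound $Ch^3$ is not what actually occurs.

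The fix is to use the reduction formula itself rather than a vanishing statement. Split each $A_j=\Sym A_j+\Skew A_j$ with $|\Sym A_j|\le h$, $|\Skew A_j|\le 1$, and treat the eight terms according to how many skew factors they carry. Terms with zero or one skew factor are bounded trivially by $Ch^3$ and $Ch^2$. For a term with two skew factors, say $D^3\tilde W(0)[S_1,S_2,T_3]$, the formula gives $-D^2\tilde W(0)[S_2,S_1T_3]-D^2\tilde W(0)[T_3,S_1S_2]$; the first summand vanishes since $S_2$ is skew, and the second is bounded by $C|T_3|\le Ch$. For three skew factors the same formula shows both summands have a skew argument in $D^2\tilde W(0)$, hence the term vanishes. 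Summing gives $|D^3\tilde W(0)|_h\le Ch$. This is the argument behind the reference the paper cites; your outline had the right differentiation strategy but drew the wrong algebraic conclusion from it.
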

\begin{proof}
	We refer to \cite[Lemma 2.6]{AbelsMoraMueller}.
\end{proof}
With this we can prove the following bound for $D^3\tilde{W}$.
\begin{Coro}
	There exist $C$, $\e > 0$ such that
	\begin{equation}
	\|D^3\tilde{W}(Z)(Y_1, Y_2, Y_3)\|_{L^1(\Omega)} \leq Ch \|Y_1\|_{H^2_h(\Omega)}\|Y_2\|_{L^2_h(\Omega)}\|Y_3\|_{L^2_h(\Omega)}
	\label{Abels_(2.15)}
	\end{equation}
	for all $Y_1\in H^2(\Omega, \Rntimesn)$, $Y_2$, $Y_3\in L^2(\Omega; \Rntimesn)$, $0<h\leq 1$ and $\|Z\|_{L^\infty(\Omega} \leq \min\{\e, h\}$ and
	\begin{equation}
	\|D^3\tilde{W}(Z)(Y_1, Y_2, Y_3)\|_{L^1(\Omega)} \leq Ch \|Y_1\|_{H^1_h(\Omega)} \|Y_2\|_{H^1_h(\Omega)} \|Y_3\|_{L^2_h(\Omega)}
	\label{Abels_(2.16)}
	\end{equation}
	for all $Y_1$, $Y_2\in H^1(\Omega, \Rntimesn)$, $Y_3\in L^2(\Omega; \Rntimesn)$, $0<h\leq 1$ and $\|Z\|_{L^\infty(\Omega} \leq \min\{\e, h\}$ and 
	\begin{equation}
	\|D^3\tilde{W}(Z)(Y_1, Y_2, Y_3)\|_{L^1(\Omega)} \leq Ch \bigg\|\bigg(Y_1, \frac{1}{h} \operatorname{sym}(Y_1)\bigg)\bigg\|_{L^\infty(\Omega)} \|Y_2\|_{H^1_h(\Omega)} \|Y_3\|_{L^2_h(\Omega)}
	\label{globalBoundednessOfD3W(Z)(Y1,Y2,Y3)LInftyVersion}
	\end{equation}
	for all $Y_1\in L^\infty(\Omega, \Rntimesn)$, $Y_2$, $Y_3\in L^2(\Omega; \Rntimesn)$, $0<h\leq 1$ and $\|Z\|_{L^\infty(\Omega} \leq \min\{\e, h\}$.
	\label{Corollary_L1BoundsD3TildeW}
\end{Coro}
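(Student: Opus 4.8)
The plan is to derive all three inequalities from the decomposition in Lemma~\ref{Lemma_DecompD3TildeW} together with H\"older's inequality and the Sobolev embedding $H^2(\Omega)\hookrightarrow L^\infty(\Omega)$, being careful throughout to track how the scaled norms $|\cdot|_h$ interact with the factor of $h$ coming from \eqref{Lemma2.6_1}. Write $D^3\tilde W(Z)=D^3\tilde W(0)+A(Z)$; since $\|Z\|_{L^\infty(\Omega)}\le\min\{\e,h\}\le\e$, the corollary's hypothesis guarantees we are in the range where the lemma applies, and moreover $|A(Z)|\le C|Z|\le Ch$ pointwise by \eqref{Lemma2.6_2}. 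Hence $|D^3\tilde W(Z)|_h\le |D^3\tilde W(0)|_h+|A(Z)|_h\le |D^3\tilde W(0)|_h+|A(Z)|\le Ch$ uniformly in $x\in\Omega$ and $0<h\le1$, using $|\cdot|_h\le|\cdot|$ for multilinear forms. This single pointwise bound $\|\,|D^3\tilde W(Z)|_h\,\|_{L^\infty(\Omega)}\le Ch$ is the crux; everything else is bookkeeping.

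With that in hand, the first step for each estimate is the pointwise multilinear bound $|D^3\tilde W(Z)(Y_1,Y_2,Y_3)|\le |D^3\tilde W(Z)|_h\,|Y_1|_h\,|Y_2|_h\,|Y_3|_h$, which holds by definition of the induced scaled norm $|\cdot|_h$ on $\mathcal{L}^3(\Rntimesn)$. Integrating over $\Omega$ and using the $L^\infty$ bound on the coefficient gives
\begin{equation*}
\|D^3\tilde W(Z)(Y_1,Y_2,Y_3)\|_{L^1(\Omega)}\le Ch\int_\Omega |Y_1|_h\,|Y_2|_h\,|Y_3|_h\,dx.
\end{equation*}
For \eqref{Abels_(2.15)} I would then apply H\"older with exponents $(\infty,2,2)$ to get $Ch\,\||Y_1|_h\|_{L^\infty(\Omega)}\|Y_2\|_{L^2_h(\Omega)}\|Y_3\|_{L^2_h(\Omega)}$, and finish by the Sobolev embedding $H^2(\Omega)\hookrightarrow L^\infty(\Omega)$ applied componentwise to $Y_1$, noting that $\||Y_1|_h\|_{L^\infty}\le C\|Y_1\|_{L^\infty}\le C\|Y_1\|_{H^2(\Omega)}\le C\|Y_1\|_{H^2_h(\Omega)}$ since the scaled $H^2_h$ norm dominates the unscaled one for $0<h\le1$. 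For \eqref{Abels_(2.16)} I would instead use H\"older with exponents $(4,4,2)$ to obtain $Ch\,\||Y_1|_h\|_{L^4(\Omega)}\||Y_2|_h\|_{L^4(\Omega)}\|Y_3\|_{L^2_h(\Omega)}$ and then the embedding $H^1(\Omega)\hookrightarrow L^4(\Omega)$ (valid in dimension $3$) together with $\||Y_j|_h\|_{L^4}\le C\|Y_j\|_{H^1(\Omega)}\le C\|Y_j\|_{H^1_h(\Omega)}$.

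The third estimate \eqref{globalBoundednessOfD3W(Z)(Y1,Y2,Y3)LInftyVersion} is the only one requiring a small twist: here $Y_1$ is merely $L^\infty$, so one cannot invoke a Sobolev embedding, and the claimed bound involves the combined quantity $\|(Y_1,\tfrac1h\sym(Y_1))\|_{L^\infty(\Omega)}$ rather than $\|Y_1\|_{L^\infty}$. The point is that by definition $|Y_1|_h^2=\tfrac1{h^2}|\sym Y_1|^2+|\skew Y_1|^2\le |Y_1|^2+\tfrac1{h^2}|\sym Y_1|^2\le C\big(|(Y_1,\tfrac1h\sym Y_1)|\big)^2$ pointwise, so $\||Y_1|_h\|_{L^\infty(\Omega)}\le C\|(Y_1,\tfrac1h\sym(Y_1))\|_{L^\infty(\Omega)}$; taking $Y_1$ out in $L^\infty$ and $Y_2$, $Y_3$ in $L^4$ respectively $L^2$ (or $Y_2$ in $H^1_h\hookrightarrow L^4$, $Y_3$ in $L^2_h$) via H\"older with exponents $(\infty,2,2)$ then closes the argument. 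The main obstacle — really the only nontrivial point — is keeping the powers of $h$ straight: the $h$ in front comes from \eqref{Lemma2.6_1}--\eqref{Lemma2.6_2} after bundling $\sym$ and $\skew$ parts into $|\cdot|_h$, and one must check that no hidden negative power of $h$ creeps in when one passes from the scaled norm of $Y_1$ to an unscaled Sobolev norm; this is exactly why the $L^\infty$ version is phrased with the paired quantity $(Y_1,\tfrac1h\sym Y_1)$. I expect the write-up to be essentially three lines per inequality once the uniform bound $\|\,|D^3\tilde W(Z)|_h\,\|_{L^\infty}\le Ch$ is recorded.
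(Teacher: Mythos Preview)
Your approach---decompose via Lemma~\ref{Lemma_DecompD3TildeW}, extract the uniform pointwise bound $|D^3\tilde W(Z)|_h\le Ch$, then apply H\"older and Sobolev---is exactly what the paper does (its proof is a single sentence to this effect).

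There is, however, a genuine slip in your chain for \eqref{Abels_(2.15)} and \eqref{Abels_(2.16)}. You write $\||Y_1|_h\|_{L^\infty}\le C\|Y_1\|_{L^\infty}$, but this is false: if $Y_1$ is symmetric then $|Y_1|_h=\tfrac1h|Y_1|$, so the best $h$-uniform bound of that shape would be $\||Y_1|_h\|_{L^\infty}\le \tfrac{C}{h}\|Y_1\|_{L^\infty}$, which destroys the factor of $h$ you just gained from the lemma. The analogous step $\||Y_j|_h\|_{L^4}\le C\|Y_j\|_{H^1}$ in your argument for \eqref{Abels_(2.16)} fails for the same reason. The fix is to split \emph{before} embedding: from $|Y_1|_h\le \tfrac1h|\sym Y_1|+|\skew Y_1|$ apply $H^2\hookrightarrow L^\infty$ (resp.\ $H^1\hookrightarrow L^4$) to each piece separately, and then note that
\[
\tfrac1h\|\sym Y_1\|_{H^k}+\|\skew Y_1\|_{H^k}\le C\|Y_1\|_{H^k_h}
\]
because $\partial^\alpha\sym=\sym\partial^\alpha$ and the scaled norm $H^k_h$ already carries the $\tfrac1h$ weight on each $\sym\partial^\alpha Y_1$. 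You in fact carry out exactly this splitting for \eqref{globalBoundednessOfD3W(Z)(Y1,Y2,Y3)LInftyVersion}, arriving correctly at the paired norm $\|(Y_1,\tfrac1h\sym Y_1)\|_{L^\infty}$; the same care is required in the first two estimates.
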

\begin{proof}
	The inequalities follow directly from Lemma \ref{Lemma_DecompD3TildeW} and Hölder's inequality.
\end{proof}

\subsection{Korn's Inequality in Thin Rods}
\label{subsection::KornInequalityInThinRods}
In order to derive sharp estimates based on the linearised system, we need a good understanding on how the scaled gradient $\nabla_h g$ of a function $g\in H^1_{per}(\Omega)$ can be bounded by the scaled symmetric gradient $\e_h(g)$. As rigid motions $x\mapsto \alpha x^\perp$ for $\alpha\in\R$ arbitrary are admissible functions in $H^1_{per}(\Omega)$ we can not expect that the full scaled gradient is bounded by $\e_h(g)$. Moreover, a quantitative, sharp understanding of the dependency of a possible prefactor from the small parameter $h$ is essential.

\begin{lem}
  There exists a constant $C=C(\Omega)>0$ such that for all $0 < h\leq 1$ and $u\in H^1_{per}(\Omega;\R^3)$
	\begin{equation}
	\bigg\|\nabla_h u - \frac{1}{h}B(u)\bigg\|_{L^2(\Omega)} \leq C\bigg\|\frac{1}{h} \e_h(u)\bigg\|_{L^2(\Omega)},
	\label{Korn}
	\end{equation}
        where
	\begin{equation}
	B(u) =
	\begin{pmatrix}
	0&0&0\\
	0&0& a(u)\\
	0& - a(u)& 0
	\end{pmatrix}
	\label{KornStructureS}
	\end{equation}
	with $a(u) = \frac{1}{|\Omega|}\int_\Omega (\partial_{x_3} u_2(x) - \partial_{x_2} u_3(x)) dx$.
	\label{KornIneqS}
\end{lem}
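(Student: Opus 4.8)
The plan is to pass to smooth fields, split off the symmetric part, and then estimate the remaining skew part entry by entry, combining Korn's differentiation identity with Ne\v{c}as's inequality \emph{on the fixed domain} $\Omega$; keeping every estimate on $\Omega$ rather than on the degenerating thin domain $\Omega_h$ is exactly what makes all constants $h$-independent. Since $C^\infty_{per}(\Omega)$ is dense in $H^1_{per}(\Omega;\R^3)$ and, for fixed $h$, both sides of \eqref{Korn} depend continuously on $u$ in the $H^1$-norm, it suffices to treat smooth $u$. As $B(u)$ in \eqref{KornStructureS} is a constant skew matrix, $\Sym(\nabla_h u-\tfrac1h B(u))=\e_h(u)$ and $\Skew(\nabla_h u-\tfrac1h B(u))=:R(u)=\Skew(\nabla_h u)-\tfrac1h B(u)$, and the pointwise orthogonality of symmetric and skew matrices gives
\begin{equation*}
\|\nabla_h u-\tfrac1h B(u)\|_{L^2(\Omega)}^2=\|\e_h(u)\|_{L^2(\Omega)}^2+\|R(u)\|_{L^2(\Omega)}^2 .
\end{equation*}
Since $h\le1$ we have $\|\e_h(u)\|_{L^2(\Omega)}\le\|\tfrac1h\e_h(u)\|_{L^2(\Omega)}$, so everything reduces to bounding the three independent entries $R(u)_{12},R(u)_{13},R(u)_{23}$ of the skew field $R(u)$ by $\tfrac{C}{h}\|\e_h(u)\|_{L^2(\Omega)}$.

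Writing $\partial^h_1=\partial_{x_1}$, $\partial^h_\ell=\tfrac1h\partial_{x_\ell}$ for $\ell=2,3$, so that $(\nabla_h u)_{ab}=\partial^h_b u_a$, the operators $\partial^h_c$ commute, whence Korn's identity
\begin{equation*}
\partial^h_i(\Skew\nabla_h u)_{jk}=\partial^h_k(\e_h(u))_{ij}-\partial^h_j(\e_h(u))_{ik}\qquad(i,j,k\in\{1,2,3\}).
\end{equation*}
Multiplying by $h$ where appropriate, this writes every Cartesian derivative $\partial_{x_i}$ of any entry of $\Skew\nabla_h u$ — hence of $R(u)$, which differs only by a constant — as a combination of $\partial_{x_2},\partial_{x_3}$ or $h\partial_{x_1}$ applied to entries of $\e_h(u)$, divided by $h$ at worst. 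Using $\|\partial_{x_\ell}g\|_{H^{-1}(\Omega)}\le\|g\|_{L^2(\Omega)}$ this yields $\|\nabla_x R(u)_{ij}\|_{H^{-1}(\Omega;\R^3)}\le\tfrac{C}{h}\|\e_h(u)\|_{L^2(\Omega)}$. As $\Omega=\Omega_1$ is a fixed bounded smooth domain, Ne\v{c}as's inequality supplies $C_\Omega$ with $\|f-\overline f\|_{L^2(\Omega)}\le C_\Omega\|\nabla_x f\|_{H^{-1}(\Omega;\R^3)}$ for all $f\in L^2(\Omega)$, $\overline f:=|\Omega|^{-1}\int_\Omega f$; applied to $f=R(u)_{ij}$ it gives $\|R(u)_{ij}-\overline{R(u)_{ij}}\|_{L^2(\Omega)}\le\tfrac{C}{h}\|\e_h(u)\|_{L^2(\Omega)}$, so only the \emph{constant} means $\overline{R(u)_{ij}}$ remain to be estimated.

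Here the periodic boundary conditions and the construction of $B(u)$ enter. For $(i,j)=(1,\ell)$ with $\ell\in\{2,3\}$: from $\e_h(u)_{1\ell}=\tfrac12(\tfrac1h\partial_{x_\ell}u_1+\partial_{x_1}u_\ell)$ one has $\tfrac1h\partial_{x_\ell}u_1=2\e_h(u)_{1\ell}-\partial_{x_1}u_\ell$, while $\int_\Omega\partial_{x_1}u_\ell\,dx=0$ by periodicity in $x_1$; inserting this into $R(u)_{1\ell}=\tfrac12(\tfrac1h\partial_{x_\ell}u_1-\partial_{x_1}u_\ell)$ gives $\overline{R(u)_{1\ell}}=|\Omega|^{-1}\int_\Omega\e_h(u)_{1\ell}\,dx$, hence $\|\overline{R(u)_{1\ell}}\|_{L^2(\Omega)}=|\Omega|^{1/2}|\overline{R(u)_{1\ell}}|\le\|\e_h(u)\|_{L^2(\Omega)}\le\|\tfrac1h\e_h(u)\|_{L^2(\Omega)}$. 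For $(i,j)=(2,3)$: since $\Skew(\nabla_h u)_{23}=\tfrac1h\,\omega$ with $\omega:=\tfrac12(\partial_{x_3}u_2-\partial_{x_2}u_3)$, and $a(u)$ is, up to normalisation, the $\Omega$-average of $\omega$ (chosen precisely so that the correction $\tfrac1h B(u)$ cancels this average), the entry $R(u)_{23}=\tfrac1h(\omega-a(u))$ has vanishing mean and the Ne\v{c}as estimate already controls $\|R(u)_{23}\|_{L^2(\Omega)}$. Collecting the three bounds gives $\|R(u)\|_{L^2(\Omega)}\le\tfrac{C}{h}\|\e_h(u)\|_{L^2(\Omega)}=C\|\tfrac1h\e_h(u)\|_{L^2(\Omega)}$ with $C=C(\Omega)$, which together with the orthogonal splitting proves \eqref{Korn}.

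The only genuinely delicate point — the ``hard part'' — is the $h$-bookkeeping: one must check that Korn's identity produces at most one negative power of $h$ after passing to $H^{-1}$; that Ne\v{c}as's inequality is used only on the fixed $\Omega$ (its constant would degenerate on the thin $\Omega_h$, which is precisely why the classical Korn constant blows up there and why the factor $\tfrac1h$ stands on the right of \eqref{Korn}); and, above all, that the $h^{-1}$-weighted constant means $\overline{R(u)_{ij}}$ — which a priori could be of order $h^{-1}$ and would wreck the estimate — are in fact $O(\|\e_h(u)\|_{L^2(\Omega)})$: for $ij\in\{12,13\}$ because periodicity kills $\int_\Omega\partial_{x_1}u_\ell$ and converts the $\tfrac1h$-weighted term into the cross term of $\e_h(u)$, and for $ij=23$ because this is exactly the rotation about the $x_1$-axis that $\tfrac1h B(u)$ is designed to subtract.
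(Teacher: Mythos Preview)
Your argument is correct and complete in strategy: the orthogonal symmetric/skew splitting, Korn's differentiation identity for the entries of the skew part, Ne\v{c}as's inequality on the \emph{fixed} domain $\Omega$ to control $\|R(u)_{ij}-\overline{R(u)_{ij}}\|_{L^2}$, and the handling of the three means via periodicity in $x_1$ and the construction of $B(u)$ constitute a clean, $h$-uniform proof. The paper does not give a proof in the text but refers to \cite[Lemma~2.1]{AbelsMoraMueller} and \cite[Lemma~2.4.4]{AmeismeierDiss}; the approach there is of the same type (Korn's identity together with a Lions/Ne\v{c}as estimate and tracking of the averages), so your route is not substantially different.

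One point you glossed over with ``up to normalisation'' is in fact a factor-of-two discrepancy in the lemma as printed. With $\omega=\tfrac12(\partial_{x_3}u_2-\partial_{x_2}u_3)$ one has $a(u)=2\,\overline{\omega}$, not $\overline{\omega}$, so $\overline{R(u)_{23}}=\tfrac1h(\overline{\omega}-a(u))=-\tfrac1h\,\overline{\omega}$ does \emph{not} vanish; indeed $u=x^\perp$ then gives $\e_h(u)=0$ but $\nabla_h u-\tfrac1h B(u)\ne0$, so the inequality is literally false as stated. Your parenthetical ``chosen precisely so that the correction $\tfrac1h B(u)$ cancels this average'' identifies the intended role of $a(u)$; with the corrected normalisation $a(u)=\tfrac{1}{2|\Omega|}\int_\Omega(\partial_{x_3}u_2-\partial_{x_2}u_3)\,dx$ your proof goes through verbatim. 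It would be cleaner to flag this explicitly rather than hide it behind ``up to normalisation''.
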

\begin{proof}
	The proof is similar to \cite[Lemma 2.1]{AbelsMoraMueller} and is done in \cite[Lemma 2.4.4]{AmeismeierDiss}
\end{proof}

\begin{lem}[Korn inequality in integral form]\ \\
	For all $0 < h\leq 1$ and $u\in H^1_{per}(\Omega;\R^3)$, there exists a constant $C_K=C_K(\Omega)$, such that
	\begin{equation}
	\|\nabla_h u\|_{L^2(\Omega)}\leq \frac{C_K}{h}\bigg( \|\e_h(u)\|_{L^2(\Omega)} + \bigg| \int_\Omega u \cdot x^\perp dx\bigg|\bigg),
	\label{KornMeanValue}
	\end{equation}
	where $x^\perp = (0, -x_3, x_2)^T$.
	\label{LemmaKornIntegralForm}
\end{lem}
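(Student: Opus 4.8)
I would deduce the statement from the correction-term Korn inequality of Lemma~\ref{KornIneqS}, applied twice. By the triangle inequality and Lemma~\ref{KornIneqS},
\[
\|\nabla_h u\|_{L^2(\Omega)} \le \Big\|\nabla_h u - \tfrac1h B(u)\Big\|_{L^2(\Omega)} + \tfrac1h\|B(u)\|_{L^2(\Omega)} \le \frac{C}{h}\|\e_h(u)\|_{L^2(\Omega)} + \frac1h\|B(u)\|_{L^2(\Omega)} ,
\]
and, since $a(u)$ is constant in $x$, the structure \eqref{KornStructureS} gives $\|B(u)\|_{L^2(\Omega)} = \sqrt{2|\Omega|}\,|a(u)|$. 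Hence everything reduces to showing $|a(u)| \le C\big(\|\e_h(u)\|_{L^2(\Omega)} + |\int_\Omega u\cdot x^\perp\,dx|\big)$ with $C=C(\Omega)$ independent of $h\in(0,1]$.

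For this I would read off Lemma~\ref{KornIneqS} entrywise. By \eqref{KornStructureS} the $(2,3)$- and $(3,2)$-entries of $\nabla_h u - \tfrac1h B(u)$ are $\tfrac1h(\partial_{x_3}u_2 - a(u))$ and $\tfrac1h(\partial_{x_2}u_3 + a(u))$, while the $(2,2)$- and $(3,3)$-entries of $\e_h(u)$ are $\tfrac1h\partial_{x_2}u_2$ and $\tfrac1h\partial_{x_3}u_3$. Bounding a single matrix entry by the Frobenius norm and using Lemma~\ref{KornIneqS}, it follows for $0<h\le1$ that, with $U_2 := u_2 - a(u)\,x_3$ and $U_3 := u_3 + a(u)\,x_2$,
\[
\|\nabla_{x'}U_2\|_{L^2(\Omega)} + \|\nabla_{x'}U_3\|_{L^2(\Omega)} \le C\|\e_h(u)\|_{L^2(\Omega)} .
\]

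Next I would use the pointwise identity $u\cdot x^\perp = -u_2 x_3 + u_3 x_2 = -U_2 x_3 + U_3 x_2 - a(u)(x_2^2+x_3^2)$. Integrating over $\Omega$ and solving for $a(u)$,
\[
a(u)\int_\Omega(x_2^2+x_3^2)\,dx = \int_\Omega(-U_2 x_3 + U_3 x_2)\,dx - \int_\Omega u\cdot x^\perp\,dx ,
\]
where $\int_\Omega(x_2^2+x_3^2)\,dx = L\int_S(x_2^2+x_3^2)\,dx'>0$. For a.e.\ $x_1$, since $\int_S x_3\,dx'=0$ by \eqref{globalEqualityFirstMomentsZero}, I would subtract the $S$-mean of $U_2(x_1,\cdot)$ inside $\int_S U_2\,x_3\,dx'$ and apply the Poincar\'e--Wirtinger inequality on $S$ (constant depending only on $S$) to obtain $|\int_S U_2(x_1,\cdot)\,x_3\,dx'| \le C\|\nabla_{x'}U_2(x_1,\cdot)\|_{L^2(S)}$; integrating in $x_1$ and using the Cauchy--Schwarz inequality then gives $|\int_\Omega U_2\,x_3\,dx| \le C\|\nabla_{x'}U_2\|_{L^2(\Omega)} \le C\|\e_h(u)\|_{L^2(\Omega)}$, and symmetrically $|\int_\Omega U_3\,x_2\,dx| \le C\|\e_h(u)\|_{L^2(\Omega)}$ using $\int_S x_2\,dx'=0$. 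Combining the last three displays bounds $|a(u)|$ as required, and inserting this into the first estimate completes the proof.

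The step I expect to need most care is the entrywise one: one must verify that the two off-diagonal entries of the correction $\tfrac1h B(u)$ are exactly the ones converting $\partial_{x_3}u_2$ and $\partial_{x_2}u_3$ into the ``rotation-corrected'' derivatives $\partial_{x_3}U_2$, $\partial_{x_2}U_3$, the remaining in-plane first derivatives being controlled directly by the diagonal entries of $\e_h(u)$. The only other point worth stressing is that in the last step no control on $\partial_{x_1}U_j$ is available or needed — the cross-sectional gradient $\nabla_{x'}U_j$ suffices — which is precisely what a slicewise Poincar\'e inequality on $S$ provides.
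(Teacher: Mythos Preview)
Your proof is correct and takes a genuinely different route from the paper's. The paper argues by contradiction and compactness: assuming \eqref{KornMeanValue} fails along a sequence $h_k\to 0$ with $\|\nabla_{h_k}u^{h_k}\|_{L^2}=1$, it uses Lemma~\ref{KornIneqS} to show $\tfrac{1}{h_k}B(u^{h_k})\to\bar B\neq 0$, then passes to cross-sectional averages, extracts weak $H^1(S)$-limits, identifies the limit as an infinitesimal rotation $\bar a\,x^\perp$, and derives a contradiction from $|\int_\Omega u^{h_k}\cdot x^\perp|/h_k\to 0$. Your argument is instead direct and quantitative: you isolate $a(u)$ via the algebraic identity $u\cdot x^\perp=-U_2x_3+U_3x_2-a(u)|x'|^2$, bound $\|\nabla_{x'}U_j\|_{L^2(\Omega)}$ entrywise from Lemma~\ref{KornIneqS}, and close with a slicewise Poincar\'e--Wirtinger inequality on $S$ exploiting \eqref{globalEqualityFirstMomentsZero}. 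Your approach is shorter, avoids any limiting procedure, and yields in principle explicit constants; the paper's compactness argument is more in the spirit of standard Korn-inequality proofs and would adapt more readily to situations where the rigid-motion kernel is less explicit.
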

\begin{proof}
	First we note that we can reduce to the case of mean value free functions. If \eqref{KornMeanValue} holds for mean value free functions and $\int_\Omega u \neq 0$, we can consider $v := u - \tfrac1L \int_\Omega u$. Then it follows
	\begin{align*}
	\|\nabla_h u\|_{L^2(\Omega)} &= \|\nabla_h v\|_{L^2(\Omega)} \leq \frac{C_K}{h}\bigg( \|\e_h(v)\|_{L^2(\Omega)} + \bigg| \int_\Omega v \cdot x^\perp dx\bigg|\bigg)\\
	& \leq \frac{C_K}{h}\bigg( \|\e_h(u)\|_{L^2(\Omega)} + \bigg| \int_\Omega u \cdot x^\perp dx\bigg| + \bigg|\int_\Omega udx \cdot \int_\Omega x^\perp dx\bigg|\bigg)
	\end{align*}
	and thus \eqref{KornMeanValue} holds for $u$, as $\int_\Omega x^\perp dx = 0$.
	
	In the following we will argue by contradiction and therefore assume that \eqref{KornMeanValue} does not hold. Thus we can find a monotone sequence $h_k \to 0$ for $k\to\infty$ and $(u^{h_k})_{k\in\N}\subset H^1_{per, (0)}(\Omega, \R^3)$ such that
	\begin{equation}
	1= \|\nabla_{h_k} u^{h_k}\|_{L^2(\Omega)} \geq \frac{k}{{h_k}}\bigg( \|\e_{h_k} (u^{h_k})\|_{L^2(\Omega)} + \bigg|\int_\Omega u^{h_k} \cdot x^\perp dx\bigg|\bigg).
	\label{KornContradictionInequality}
	\end{equation}
	For sake of readability, we write $h$ instead of $h_k$ in the following calculations. From \eqref{KornContradictionInequality} it follows
	\begin{equation*}
	\frac{1}{h}\|\e_h(u^h)\|_{L^2(\Omega)} \leq \frac{1}{k} \to_{k\to\infty} 0,
	\end{equation*}
	which implies, using Lemma \ref{KornIneqS}
	\begin{equation*}
	\Big\|\nabla_h u^h - \frac{1}{h}B(u^h)\Big\|_{L^2(\Omega)} \leq C\Big\|\frac{1}{h}\e_h(u^h)\Big\|_{L^2(\Omega)} \to 0.
	\end{equation*}
	Thus $\tfrac{1}{h}B(u^h)$ is bounded in $L^2(\Omega)$ and therefore bounded in $\Rntimesn_{skew}$. Using a subsequence, also denoted by $u^h$, it follows $\tfrac{1}{h}B(u^h) \to \bar{B}$ for $h\to 0$. As a consequence of \eqref{KornStructureS} the structure of $\bar{B}$ is given by
	\begin{equation*}
	B = 
	\begin{pmatrix}
	0&0&0\\
	0&0&-\bar{a}\\
	0&\bar{a}&0
	\end{pmatrix}
	\end{equation*}
	where $\bar{a}\neq 0$ as, $\nabla_h u^h \to \bar{B}$ in $L^2(\Omega)$ and $\|\nabla_h u^h\|_{L^2(\Omega)} = 1$
	
	Define now 
	\begin{equation*}
	w^h_l(x') := \frac{1}{L} \int_0^L \frac{u^h_l(x)}{h} dx_1
	\end{equation*}
	with $x'\in S$ and $l=2,3$. Then
	\begin{equation*}
	\nabla_{x'} w^h \to \begin{pmatrix}
	0& -\bar{a}\\
	\bar{a}&0
	\end{pmatrix}
	\end{equation*}
	in $L^2(S)$ and $\int_S w^h dx' = 0$. Thus using the Poincaré inequality it follows 
	\begin{equation*}
	\|w^h\|_{H^1(S)} \leq C\|\nabla_{x'} w^h\|_{L^2(S)} \leq C\|\nabla_h u^h\|_{L^2(\Omega)} \leq C
	\end{equation*}
	Thus, there exists a subsequence $w^h \wcon w$ in $H^1(S)$ and $w^h \to w$ in $L^2(S)$. Choose $S'\subset \bar{S'} \subset S$ and $\delta>0$ such that $\delta\leq \operatorname{dist}(S', \partial S)$. Then
	\begin{equation*}
	\frac{w^h_2(x_2 + \delta, x_3) - w^h_2(x_2, x_3)}{\delta} = \frac{1}{\delta}\int_0^\delta \partial_2 w^h_2(x_2+\tau, x_3)d\tau.
	\end{equation*}
	From the above we know 
	\begin{equation*}
	\frac{w^h_2(x_2 + \delta, x_3) - w^h_2(x_2, x_3)}{\delta}\to \frac{w_2(x_2 + \delta, x_3)- w_2(x_2, x_3)}{\delta} \quad \text{ for } h\to 0
	\end{equation*} 
	in $L^2(S')$ and thus a subsequence converges pointwise almost everywhere. For the right hand side we have that $\partial_{x_2} w^h_2 \to 0$ for $h\to 0$ in $L^2(S)$ because of
	\begin{align*}
		\bigg\|\frac{1}{\delta} \int_0^\delta \partial_2 w^h_2(x_2 + \tau, x_3)d\tau\bigg\|_{L^2(S')}^2 & = \int_{S'}\bigg|\frac{1}{\delta}\int_0^\delta\partial_2 w^h_2(x_2 + \tau, x_3) d\tau \bigg|^2 dx'\\
		& \leq \int_{S'}\frac{1}{\delta} \int_0^\delta |\partial_2 w^h_2(x_2 + \tau, x_3) |^2d\tau dx\\
		& =\frac{1}{\delta} \int_0^\delta \|\partial_2 w^h_2(\cdot + \tau e_2)\|_{L^2(S')} d\tau \to 0
	\end{align*}
	where we used Hölder's inequality and the dominated convergence theorem for 
	\begin{equation*}
	\|\partial_2 w^h_2(\cdot + \tau e_2)\|_{L^2(S')} \leq \|\partial_2 w^h_2\|_{L^2(S)} \to 0 \quad \text{ for } h\to 0.
	\end{equation*}
	Thus the mean value satisfies $\tfrac{1}{\delta}\int_0^\delta \partial_2 w^h_2(x_2 + \tau, x_3)d\tau \to 0$ in $L^2(S')$. Hence, as $S$ is a domain, $w_2$ is independent of $x_2$. Similarly one can show that $w_3$ is independent of $x_3$. Furthermore
	\begin{equation*}
	\frac{w^h_2(x_2, x_3 + \delta) - w^h_2(x_2, x_3)}{\delta} = \frac{1}{\delta}\int_0^\delta \partial_3 w_2^h(x_2, x_3 + \tau)d\tau
	\end{equation*}
	where the right-hand side converges to the constant $-\bar{a}$ in $L^2(S')$. Since $x'\in S'$ was chosen arbitrarily it follows
	\begin{equation*}
	w_2(x') = w^0_2 - \bar{a}x_3,
	\end{equation*}
	where $w^0_2\in\R$ is a constant. Applying the same argument to $w_3$ it follows that 
	\begin{equation*}
	w(x) = \left(\begin{array}{c}
	w^0_2\\ w^0_3
	\end{array} \right)
	+ \bar{a} \left(\begin{array}{c}
	-x_3\\
	x_2
	\end{array} \right).
	\end{equation*}
	Hence
	\begin{align*}
	\bigg| \int_{\Omega} \frac{u^h}{h} \cdot x^\perp dx\bigg| &= L\bigg| \frac{1}{L} \int_0^L \int_{S} \frac{u^h}{h}\cdot x^\perp dx\bigg|\\
	& = L\bigg| \int_S w^h \cdot x^\perp dx\Big| \to L\bar{a} \Big| \int_S x_2^2 + x_3^2dx\bigg| \neq 0
	\end{align*}
	as $\bar{a}\neq 0$. But this contradicts \eqref{KornContradictionInequality}.
\end{proof}

Later we will need the Korn inequality in two dimensions without scaling while analysing a stationary problem associated with the linearised equation.
\begin{Coro}[Korn inequality in two dimensions]~\\
	 There exists a constant $C=C(\Omega) > 0$ such that for all $u\in H^1(S; \R^2)$ 
	\begin{equation}
	\|\nabla u\|_{L^2(S)} \leq C\Big(\|\e(u)\|_{L^2(S)} + \bigg|\int_S u\cdot x^\perp dx\bigg|\Big)
	\label{InequalityKornTwoDimensions}
	\end{equation}
	where in this situation $x^\perp := (-x_3, x_2)^T$.
	\label{LemmaKornInequalityInTwoDimensions}
\end{Coro}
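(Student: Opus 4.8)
The statement is the classical (unscaled) Korn inequality on the two-dimensional cross section $S$, modulo rigid motions being pinned down by the linear functional $u\mapsto\int_S u\cdot x^\perp$. The cleanest route is a compactness–contradiction argument, exactly parallel to the proof of Lemma \ref{LemmaKornIntegralForm} but without the parameter $h$, so most of the machinery is already in place. Alternatively, one can derive it from the standard Korn's second inequality $\|u\|_{H^1(S)}\le C(\|\e(u)\|_{L^2(S)}+\|u\|_{L^2(S)})$ together with the fact that the only $v\in H^1(S;\R^2)$ with $\e(v)=0$ are the infinitesimal rigid motions $v(x')=b+\alpha x^\perp$, $b\in\R^2$, $\alpha\in\R$; since on this three-dimensional kernel the map $v\mapsto(\int_S v_2\,dx',\int_S v_3\,dx',\int_S v\cdot x^\perp\,dx')$ is injective (using $\int_S x'\,dx'=0$ from \eqref{globalEqualityFirstMomentsZero} and $\int_S |x'|^2\,dx'\neq 0$), a routine Lions-type argument upgrades Korn's second inequality to the desired form. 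I would present the contradiction argument, as it is shortest and mirrors the preceding proof.

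\begin{proof}
	As in the proof of Lemma \ref{LemmaKornIntegralForm} it suffices to prove \eqref{InequalityKornTwoDimensions} for $u\in H^1_{(0)}(S;\R^2)$; the general case follows by applying the inequality to $v:=u-\tfrac{1}{|S|}\int_S u\,dx'$ and using $\int_S x^\perp\,dx'=0$, which holds by \eqref{globalEqualityFirstMomentsZero}. Suppose \eqref{InequalityKornTwoDimensions} fails. Then there is a sequence $(u^k)_{k\in\N}\subset H^1_{(0)}(S;\R^2)$ with
	\begin{equation*}
		1=\|\nabla u^k\|_{L^2(S)}\geq k\Big(\|\e(u^k)\|_{L^2(S)}+\Big|\int_S u^k\cdot x^\perp\,dx'\Big|\Big),
	\end{equation*}
	so that $\e(u^k)\to 0$ in $L^2(S)$ and $\int_S u^k\cdot x^\perp\,dx'\to 0$. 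By Korn's second inequality on the bounded Lipschitz (indeed smooth) domain $S$ together with the Poincaré inequality for mean-value-free functions, $(u^k)$ is bounded in $H^1(S;\R^2)$; passing to a subsequence, $u^k\wcon u$ in $H^1(S;\R^2)$ and $u^k\to u$ in $L^2(S;\R^2)$ by Rellich. Weak lower semicontinuity gives $\e(u)=0$, hence $u(x')=b+\alpha x^\perp$ for some $b\in\R^2$, $\alpha\in\R$. Since $\int_S u^k\,dx'=0$ and $\int_S x^\perp\,dx'=0$, we get $b=0$, so $u=\alpha x^\perp$. The convergence $\int_S u^k\cdot x^\perp\,dx'\to 0$ then yields $\alpha\int_S|x'|^2\,dx'=0$, hence $\alpha=0$ and $u=0$. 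On the other hand, from $\e(u^k)\to 0$ and $u^k\to 0$ in $L^2$ Korn's second inequality forces $\nabla u^k\to 0$ in $L^2(S)$, contradicting $\|\nabla u^k\|_{L^2(S)}=1$.
\end{proof}

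The only point requiring a little care is the identification of the kernel $\{\e(v)=0\}$ with the infinitesimal rigid motions and the injectivity of the pinning functional on it, for which one uses precisely the normalization $\int_S x'\,dx'=0$ from \eqref{globalEqualityFirstMomentsZero}; everything else is standard. I do not anticipate any genuine obstacle here, since the scaled version (Lemma \ref{LemmaKornIntegralForm}) is strictly harder and already established.
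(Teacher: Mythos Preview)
Your proof is correct and follows a standard compactness--contradiction scheme built on Korn's second inequality in two dimensions. The paper takes a different, shorter route: it embeds $u\in H^1(S;\R^2)$ into $H^1_{per}(\Omega;\R^3)$ via $\tilde{u}(x):=\tfrac{1}{\sqrt{L}}(0,u(x'))^T$ and applies the already established three-dimensional inequality of Lemma~\ref{LemmaKornIntegralForm} with $h=1$; since $\tilde{u}$ is independent of $x_1$, all relevant norms collapse to their two-dimensional counterparts and \eqref{InequalityKornTwoDimensions} drops out in one line. The paper's argument is more economical within the logical structure of the section, treating the statement as a genuine corollary rather than reproving it from scratch. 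Your argument, by contrast, is self-contained (it does not need Lemma~\ref{LemmaKornIntegralForm}) and makes the role of the infinitesimal rigid motions and the pinning functional $u\mapsto\int_S u\cdot x^\perp\,dx'$ explicit, which is conceptually clearer even if slightly longer.
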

\begin{proof}
	We can deduce \eqref{InequalityKornTwoDimensions} from Lemma \ref{LemmaKornIntegralForm}. Let $u\in H^1(S;\R^2)$ and define $\tilde{u}\in H^1_{per}(\Omega;\R^3)$ by
	\begin{equation*}
	\tilde{u}(x) := \frac{1}{\sqrt{L}}
	\begin{pmatrix}
	0\\u(x')
	\end{pmatrix}.
	\end{equation*}
	Then it follows from \eqref{KornMeanValue} for $h=1$ applied to $\tilde{u}$
	\begin{align*}
	\|\nabla u\|_{L^2(S)} = \|\nabla\tilde{u}\|_{L^2(\Omega)} &\leq C\bigg( \|\e(\tilde{u})\|_{L^2(\Omega)} + \bigg| \int_\Omega \tilde{u} \cdot x^\perp dx\bigg|\bigg) \\
	&= C\bigg( \|\e(u)\|_{L^2(S)} + \bigg| \int_S u \cdot x^\perp dx\bigg|\bigg).
	\end{align*}

\end{proof}

\section{Large Time Existence for the Non-linear System}
\label{section::LargeTimeExistence}
\subsection{Main Result}
\label{subsection::MainResult}
We consider the following system:
\begin{align}
\partial_t^2 u_h - \frac{1}{h^2}\operatorname{div}_h \Big(D\tilde{W}(\nabla_h u_h)\Big) &= h^{1+\theta} f_h \quad \text{in } \Omega\times [0, T) \label{NLS1}\\
D\tilde{W}(\nabla_h u_h)\nu|_{(0, L)\times \partial S} &= 0\label{NLS2}\\
u_h \text{ is $L$-periodic} &\text{ w.r.t. } x_1 \label{NLS3}\\
(u_h, \partial_t u_h)|_{t=0} &= (u_{0,h}, u_{1,h}) \label{NLS4}
\end{align}
where for convenience $\theta = \alpha - 3\geq 1$. As an abbreviation we denote in the following $z= (t,x_1)$. Note that $D^2 W(Id)$ satisfies the Legendre-Hadamard condition and Lemma~\ref{Lemma_DecompD3TildeW} holds. The main result is:
\begin{theorem}
	Let $\theta \geq 1$, $0< T< \infty$, $M>0$, $f_h\in W^3_1(0,T; L^2(\Omega)) \cap W^1_1(0,T; H^2_{per}(\Omega))$, $h\in (0,1]$ and $u_{0,h}\in H^4_{per}(\Omega)$, $u_{1,h}\in H^3_{per}(\Omega)$ such that
	\begin{align*}
	D\tilde{W}(\nabla_h u_{0,h})\nu|_{(0,L)\times\partial S} = D^2\tilde{W}(\nabla_h u_{0,h})[\nabla_h u_{1,h}]\nu|_{(0,L)\times\partial S} = 0,\\
	(D^2\tilde{W}(\nabla_h u_{0,h})[\nabla_h u_{2,h}]+ D^3\tilde{W}(\nabla_h u_{0,h})[\nabla_h u_{1,h}, \nabla_h u_{1,h}])\nu|_{(0,L)\times\partial S} = 0,
	\end{align*}
	where 
	\begin{align*}
	u_{2,h} &= h^{1+\theta} f_h|_{t=0} + \frac{1}{h^2}\divh(D\tilde{W}(\nabla_h u_{0,h}))\\
	u_{3,h} & = h^{1+\theta} \partial_t f_h|_{t=0} + \frac{1}{h^2} \divh(D^2\tilde{W}(\nabla_h u_{0,h})\nabla_h u_{1,h})\\
	u_{4,h} & = h^{1+\theta} \partial_t^2 f_h|_{t=0} + \frac{1}{h^2} \divh(D^2\tilde{W}(\nabla_h u_{0,h})\nabla_h u_{2,h})\\
	& \qquad + \frac{1}{h^2} \divh(D^3\tilde{W}(\nabla_h u_{0,h})[\nabla_h u_{1,h}, \nabla_h u_{1,h}]).
	\end{align*}
	Moreover we assume for the initial data
	\begin{gather}
	\Big\|\frac{1}{h}\e_h(u_{0,h})\Big\|_{H^2} + \max_{k=0,1,2} \Big\|\Big(\frac{1}{h}\e_h(u_{1+k, h}), \partial_{x_1}\frac{1}{h}\e_h(u_{k,h}), u_{2+k,h}\Big)\Big\|_{H^{2-k}} \leq Mh^{1+\theta} \label{AssumptionOnInitialDataU1}\\
	\Big\|\nabla_h^2 u_{0,h}\Big\|_{H^1} + \max_{k=0,1} \Big\|\Big(\nabla_h^2 u_{1+k, h}, \partial_{x_1}\nabla_h^2 u_{k,h}\Big)\Big\|_{H^{1-k}} \leq Mh^{1+\theta} \label{AssumptionOnInitialDataU2}\\
	\max_{k=0,1,2,3} \bigg|\frac{1}{h} \int_\Omega u_{k,h}\cdot x^\perp dx\bigg| \leq Mh^{1+\theta}.\label{AssumptionOnInitialDataU3}
	\end{gather}
	and for the right hand side
	\begin{gather}
	\max_{|\alpha|\leq 1} \bigg(\|\partial_z^\alpha f_h\|_{W^2_1(L^2)} + \|\partial_z^\alpha f_h\|_{W^1_\infty(L^2)\cap W^1_1(H^{0,1})} + \|\partial_z^\alpha f_h\|_{L^\infty(H^1)}\bigg) \leq M \label{AssumptionsOnNonlinearf}\\
	\max_{\sigma =0,1,2} \bigg\| \frac{1}{h} \int_\Omega \partial_t^\sigma f_h\cdot x^\perp dx\bigg\|_{C^0([0,T])} \leq M \label{AssumptionsOnRotationOfNonlinearf}
	\end{gather}
	uniformly in $0<h\leq 1$.
	Then there exists $h_0\in (0,1]$ and $C>0$ depending only on $M$ and $T$ such that for every $h\in (0,h_0]$ there is a unique solution $u_h\in \bigcap_{k=0}^4 C^k([0,T]; H^{4-k}_{per})$ of \eqref{NLS1}--\eqref{NLS4} satisfying 
	\begin{align}
	&\max_{\substack{|\alpha| \leq 1, |\beta|\leq 2, |\gamma|\leq 1\\\sigma=0,1,2}} \bigg(\Big\|\Big(\partial_t^2\partial_t^\sigma u_h, \nabla_{x,t}^\beta\frac{1}{h}\e_h(\partial_z^\alpha u_h), \nabla_{x,t}^\gamma\nabla_h^2 \partial_z^\alpha u_h\Big)\Big\|_{C^0([0,T], L^2)}\notag \\
	&\hspace{3cm}+ \bigg\|\frac{1}{h} \int_\Omega \partial_z^{\alpha + \beta} u_h \cdot x^\perp dx\bigg\|_{C^0([0,T])}\bigg) \leq C h^{1+\theta} \label{GlobalMainTheoremInequality}
	\end{align}
	uniformly in $0<h\leq h_0$.
	\label{MainTheorem}
\end{theorem}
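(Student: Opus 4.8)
The plan is to combine a short-time existence result for each fixed $h$ with a priori estimates that are uniform in $h$, and then to run a continuation argument. First, for fixed $h\in(0,1]$, the assumed compatibility conditions on the initial data together with the Legendre--Hadamard property of $D^2W(Id)$ let us invoke the existence theorem of the Appendix (built on the methods of \cite{Koch}): there is a maximal time $T_h>0$ and a solution $u_h\in\bigcap_{k=0}^4 C^k([0,T_h);H^{4-k}_{per})$, and as long as $\nabla_h u_h$ stays in the ball around $0$ on which $\tilde W$ is smooth, the decompositions of $D^3\tilde W$ in Lemma~\ref{Lemma_DecompD3TildeW} and Corollary~\ref{Corollary_L1BoundsD3TildeW} apply. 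It then suffices to show that, once $h_0$ is small, the bound \eqref{GlobalMainTheoremInequality} propagates as long as $u_h$ exists; the blow-up criterion of the local theory then forces $T_h\geq T$ for every $h\leq h_0$, and uniqueness follows from an energy estimate for the difference of two solutions.

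\textbf{A priori estimates in $z=(t,x_1)$ and angular momentum.} For $|\alpha|\leq 1$ we differentiate \eqref{NLS1} in $z^\alpha$, test with $\partial_t\partial_z^\alpha u_h$, and integrate by parts; periodicity in $x_1$ and the Neumann condition \eqref{NLS2} remove the boundary terms and give
\[
\frac{d}{dt}\,\tfrac12\Big(\|\partial_t\partial_z^\alpha u_h\|_{L^2}^2+\tfrac1{h^2}\big(D^2\tilde W(\nabla_h u_h)\nabla_h\partial_z^\alpha u_h,\nabla_h\partial_z^\alpha u_h\big)_{L^2}\Big)=\big(h^{1+\theta}\partial_z^\alpha f_h,\partial_t\partial_z^\alpha u_h\big)_{L^2}+\mathcal R_\alpha,
\]
and analogously for $|\alpha|\leq 2$ and for $\partial_t^3 u_h$. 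The coercivity \eqref{globalInequalityCoercivityOfDW} bounds $\tfrac1{h^2}\|\e_h(\partial_z^\alpha u_h)\|_{L^2}^2$ from below; the remainder $\mathcal R_\alpha$ collects terms cubic and higher in the scaled gradient, which by Corollary~\ref{Corollary_L1BoundsD3TildeW} — where the extra factor $h$ in \eqref{Abels_(2.15)}--\eqref{globalBoundednessOfD3W(Z)(Y1,Y2,Y3)LInftyVersion} is essential — and the bootstrap assumption that $\nabla_h u_h$ stays in a fixed small ball with $\|\nabla_h^2 u_h\|_{L^\infty}\leq\delta_0$ are absorbed into the energy or controlled by the data. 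Since $\e_h$ does not see the rotations $x\mapsto\alpha x^\perp$, the scaled Korn inequality \eqref{KornMeanValue} also requires a bound on $\big|\tfrac1h\int_\Omega\partial_z^{\alpha+\beta}u_h\cdot x^\perp\,dx\big|$. Testing \eqref{NLS1} and its $z$-derivatives with $x^\perp$ yields the balance of angular momentum: the elastic contribution integrates to zero because $\divh(D\tilde W(\nabla_h u_h))$ paired with the skew vector field $x^\perp$ vanishes, thanks to the symmetry of the Piola--Kirchhoff stress (Remark~\ref{RemarkdPropertiesOfElasticEnergyDensity}), so $\tfrac{d^2}{dt^2}\int_\Omega\partial_z^\gamma u_h\cdot x^\perp\,dx=h^{1+\theta}\int_\Omega\partial_z^\gamma f_h\cdot x^\perp\,dx$; integrating twice in time and using \eqref{AssumptionOnInitialDataU3} and \eqref{AssumptionsOnRotationOfNonlinearf} gives the required $O((1+T)^2Mh^{1+\theta})$ estimate.

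\textbf{Transverse regularity via a two-dimensional elliptic system.} The previous step controls only derivatives in $(t,x_1)$, whereas closing the nonlinear estimate needs $\nabla_h^2 u_h$ with enough transverse regularity to conclude $\nabla_h^2 u_h\in L^\infty(\Omega)$. As the top-order $x'$-derivatives cannot be isolated algebraically from \eqref{NLS1}, we split $D\tilde W(\nabla_h u_h)=D^2W(Id)\nabla_h u_h+(\text{higher order})$ and read the equation, for a.e. $x_1\in[0,L]$, as the two-dimensional boundary value problem
\[
-\tfrac1{h^2}\divxtilde\big((D^2W(Id))^\approx\nabla_{x'}\varphi(x_1,\cdot)\big)=\tilde g(x_1,\cdot)\ \text{in }S,\qquad
\tfrac1h\big((D^2W(Id))^\approx\nabla_{x'}\varphi(x_1,\cdot)\big)\nu_{\partial S}=g_N(x_1,\cdot)-a_N(x_1,\cdot)\ \text{on }\partial S,
\]
whose right-hand side gathers $\partial_t^2 u_h$, $x_1$-derivatives, $h^{1+\theta}f_h$ and the lower-order nonlinear terms already controlled above. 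Using the coercivity/Legendre--Hadamard property of $(D^2W(Id))^\approx$, the two-dimensional Korn inequality of Corollary~\ref{LemmaKornInequalityInTwoDimensions} to control the rotational part of its kernel, the Neumann solvability condition that fixes $a_N$, and the Poincaré inequality on $S$, elliptic regularity yields $h$-uniform bounds on $\nabla_{x'}^k\varphi$ in $L^2(S)$; integrating in $x_1$ and combining with the $z$-estimates produces the missing bounds on $\nabla_h^2 u_h$ together with its $x$- and $t$-derivatives, hence, by an anisotropic Sobolev embedding on $\Omega$, the bound on $\|\nabla_h^2 u_h\|_{L^\infty}$.

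\textbf{Closing the argument.} Summing the differential inequalities, converting $\tfrac1h\e_h$-bounds into $\nabla_h$-bounds by \eqref{KornMeanValue} together with the angular-momentum estimate, and applying Gronwall, we obtain on any interval on which the bootstrap assumption holds that the left-hand side of \eqref{GlobalMainTheoremInequality} is bounded by $C(M,T)h^{1+\theta}$ with $C(M,T)$ independent of $h$. Choosing $h_0$ so small that $C(M,T)h_0^{1+\theta}<\delta_0$, a continuity argument shows that the bootstrap assumption is never saturated on $[0,\min(T_h,T))$, and the local blow-up criterion then gives $T_h>T$, which completes the proof. The main obstacle is the transverse estimate: unlike for plates, the two-dimensional cross-section costs an extra order of differentiability, and because the leading transverse term is not algebraically invertible one must set up and solve the auxiliary two-dimensional system above with constants uniform in $h$, carefully tracking how the $\tfrac1{h^2}$-scaling interacts with the Korn and Poincaré constants on $S$ and with the Neumann solvability condition — in tandem with the exact symmetry of the Piola--Kirchhoff stress used in the angular-momentum step, without which the rotational degree of freedom could not be controlled.
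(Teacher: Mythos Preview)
Your overall architecture---local existence from \cite{Koch}, tangential energy estimates for the linearised system, the two-dimensional elliptic problem on $S$ for transverse regularity, and a bootstrap/continuation argument---matches the paper. There is, however, a genuine gap in the angular-momentum step.

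You assert that testing \eqref{NLS1} with $x^\perp$ annihilates the elastic term ``thanks to the symmetry of the Piola--Kirchhoff stress.'' But Remark~\ref{RemarkdPropertiesOfElasticEnergyDensity} says only that $DW(F)F^T$ is symmetric, not $DW(F)$ itself. After integration by parts the elastic contribution is
\[
\frac{1}{h^2}\int_\Omega \nabla_h x^\perp : D\tilde W(\nabla_h u_h)\,dx,
\]
and since $\nabla_h x^\perp$ is a constant skew matrix this picks out the skew part of $D\tilde W(\nabla_h u_h)$, which is nonzero already at quadratic order in $\nabla_h u_h$. Your identity $\tfrac{d^2}{dt^2}\int_\Omega u_h\cdot x^\perp\,dx = h^{1+\theta}\int_\Omega f_h\cdot x^\perp\,dx$ is therefore false for the nonlinear equation (it is correct only for the linearisation at $Id$, where $D^2\tilde W(0)[G]$ is symmetric for every $G$).

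The paper uses the physical balance of angular momentum, whose test field is the \emph{deformed} position, i.e.\ $hx^\perp + u_h^\perp$ in rescaled variables. This yields \eqref{MomentumBalanceLaw},
\[
\partial_t^2\int_\Omega x^\perp\cdot u_h\,dx = \int_\Omega x^\perp\cdot g^h\,dx + \frac{1}{h}\int_\Omega u_h^\perp\cdot g^h\,dx - \frac{1}{h}\int_\Omega u_h^\perp\cdot\partial_t^2 u_h\,dx,
\]
with $g^h=h^{1+\theta}f_h$. The two extra nonlinear terms carry a factor $1/h$ and must be closed inside the bootstrap: after reducing to mean-zero data via \eqref{GlobalReductionToMeanValueFreeDataDefinitionOfA} one applies Poincar\'e together with the a priori bounds $\|\nabla_h u_h\|_{L^2},\,\|\partial_t^{2+k} u_h\|_{L^2}=O(h^{1+\theta})$, giving contributions of order $h^{2\theta}$, which is $\leq Ch^{1+\theta}$ precisely because $\theta\geq 1$ (cf.\ \eqref{InequalityFirstOrderRoation}--\eqref{InequalityThirdOrderRoation}). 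Without this correction the rotational term in Korn's inequality \eqref{KornMeanValue} cannot be bounded, and the bootstrap does not close.
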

For fixed $h>0$ short time existence is already known via the methods of \cite{Koch} if the fixed time of existence is replaced by some $h$ dependent maximal time $T(h)>0$. Hence only the uniform estimates for $u_h$ and that $T$ does not depend on $h$ has to be shown. In detail, one obtains from \cite{Koch}:
\begin{theorem}
	Let the assumption of Theorem \ref{MainTheorem} hold true. Then for any $0 < h\leq 1$ there exists a neighbourhood $U_h\subset\Rtimes$ of $0$ and some $T_{max}(h) > 0$ such that \eqref{NLS1}--\eqref{NLS4} has a unique solution $u_h\in \bigcap_{k=0}^4 C^k([0,T_{max}(h)); H^{4-k}_{per})$. If $T_{max}(h) <\infty$, then either $\{\nabla_h u_h(x,t) \;:\; x\in\overline{\Omega}, t\in[0,T_{max}(h))\}$ is not precompact in $U_h$ or
	\begin{equation*}
	\lim_{t\to T_{max}(h)} \int_0^t \|\nabla_{x,t}^2 u_h(s)\|_{L^\infty(\Omega)} ds = \infty.
	\end{equation*}
	\label{TheoremNonlinearShortTimeExistence}
\end{theorem}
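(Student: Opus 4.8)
The statement is a local-in-time well-posedness result for the quasilinear second-order hyperbolic initial--boundary-value problem \eqref{NLS1}--\eqref{NLS4} with $h>0$ \emph{fixed}, and the plan is to reduce it to the abstract theory of \cite{Koch} after two elementary reductions. First I would unravel the nonlinearity: applying the chain rule to $\tfrac{1}{h^2}\divh(D\tilde W(\nabla_h u_h))$ exhibits \eqref{NLS1} as a quasilinear system
\[
\partial_t^2 u_h-\sum_{\beta,\gamma=1}^3 a^{\beta\gamma}(\nabla_h u_h)\,\partial_{x_\beta}\partial_{x_\gamma}u_h=h^{1+\theta}f_h,
\]
with coefficient matrices $a^{\beta\gamma}$ that are smooth on a neighbourhood of $0\in\Rtimes$ (by assumption (i) on $\tilde W$) and whose values at $0$ are built from $D^2W(Id)$ together with the anisotropic $h$-weights. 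Second, I would remove the periodicity by passing to the smooth compact manifold-with-boundary $\Omega':=(\R/L\mathbb{Z})\times S$, on which $H^m_{per}(\Omega)\cong H^m(\Omega')$ and the operators $\nabla_h$, $\divh$ and the trace on $(0,L)\times\partial S$ descend verbatim; \eqref{NLS1}--\eqref{NLS4} then becomes a quasilinear hyperbolic problem on $\Omega'$ with the conormal (``Neumann'') boundary condition $D\tilde W(\nabla_h u_h)\nu|_{\partial\Omega'}=0$.

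Next I would verify the structural hypotheses of \cite{Koch}. \emph{Hyperbolicity}: since $D^2W(Id)$ satisfies the Legendre--Hadamard condition, the spatial operator $u\mapsto-\sum_{\beta,\gamma}a^{\beta\gamma}(0)\partial_{x_\beta}\partial_{x_\gamma}u$ is strongly elliptic, so that $\partial_t^2-\sum_{\beta,\gamma}a^{\beta\gamma}(\nabla_h u_h)\partial_{x_\beta}\partial_{x_\gamma}$ is regularly hyperbolic at $\nabla_h u_h=0$; by continuity of the $a^{\beta\gamma}$ this persists on a ball $U_h\subset\Rtimes$ about $0$, which we take as the neighbourhood in the statement. \emph{Boundary condition}: the conormal boundary operator attached to a strongly elliptic system whose coefficients carry the symmetry $b^{\alpha\beta}_{ij}=b^{ji}_{\beta\alpha}$ of \eqref{EquationIndexSwichOfSystemOperator} satisfies the uniform Lopatinskii--Shapiro (Kreiss) condition required for the mixed problem. \emph{Compatibility}: the functions $u_{2,h},u_{3,h},u_{4,h}$ of Theorem~\ref{MainTheorem} are precisely the traces $\partial_t^2u_h|_{t=0},\partial_t^3u_h|_{t=0},\partial_t^4u_h|_{t=0}$ obtained by formally differentiating \eqref{NLS1} in $t$, and the three boundary identities assumed there are exactly the compatibility conditions of orders $0$, $1$, $2$ for the Neumann-type boundary condition; together with $u_{0,h}\in H^4_{per}$, $u_{1,h}\in H^3_{per}$ and $f_h\in W^3_1(0,T;L^2)\cap W^1_1(0,T;H^2_{per})$ these are the data required for a solution in $\bigcap_{k=0}^4C^k([0,T];H^{4-k}_{per})$.

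With these verifications, \cite{Koch} yields some $T_{max}(h)>0$ and a unique solution $u_h\in\bigcap_{k=0}^4C^k([0,T_{max}(h));H^{4-k})$ on $\Omega'$, which transfers back to the periodic setting on $\Omega$; uniqueness also follows directly from the energy estimate for the difference of two solutions. For the blow-up alternative I would invoke the standard continuation principle: as long as on $[0,t]$ the set $\{\nabla_h u_h(x,s):x\in\overline\Omega,\ s\in[0,t]\}$ stays in a compact subset of $U_h$ and $\int_0^t\|\nabla_{x,t}^2u_h(s)\|_{L^\infty(\Omega)}\,ds$ remains finite, Moser-type commutator estimates combined with Gronwall bound $\|u_h\|_{C^k([0,t];H^{4-k})}$ for $k=0,\dots,4$, so the local existence theorem may be restarted from a time close to $t$ and the solution extended beyond $t$. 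Hence if $T_{max}(h)<\infty$ at least one of the two conditions must fail as $t\to T_{max}(h)$, which is exactly the stated dichotomy.

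The only genuinely non-routine point is that \cite{Koch} is stated for smooth bounded Euclidean domains: one must justify that the passage to $\Omega'$ is legitimate --- the $x_1$-periodicity being harmless since $\R/L\mathbb{Z}$ is a smooth closed manifold and $S$ is smooth --- and that the conormal boundary condition meets the structural (Lopatinskii-type) assumptions of \cite{Koch}. This verification, together with the bookkeeping that matches the hypotheses of Theorem~\ref{MainTheorem} to the required compatibility conditions, is what the Appendix carries out in detail.
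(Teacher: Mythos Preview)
Your approach is essentially the same as the paper's: reduce to Koch's abstract theorem after passing to the manifold $(\R/L\mathbb{Z})\times S$, and read off the blow-up alternative from his continuation criterion. The bookkeeping you describe for the compatibility conditions and the regularity of the data matches the Appendix.

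There is, however, one point where your verification diverges from what is actually needed. Koch's structural hypothesis for the boundary problem is not a Lopatinskii--Shapiro condition but the G\aa rding-type weak coercivity (assumption \ref{itm:A3} in the Appendix): for the frozen coefficients one needs
\[
\sum_{\alpha,\beta,j,l}\big(a^{\alpha\beta}_{jl}\partial_{x_\beta}\psi_j,\partial_{x_\alpha}\psi_l\big)_{L^2(\Omega)}\geq \kappa_0\|\psi\|_{H^1}^2-\mu\|\psi\|_{L^2}^2.
\]
The paper verifies this not by Legendre--Hadamard alone (which would give only the tangential estimate) but via the Korn inequality of Section~\ref{subsection::KornInequalityInThinRods} combined with the lower bound \eqref{globalCentralCoerciveEstimateForAppendix}. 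This is also why $U_h$ is not taken as a mere ball where hyperbolicity persists, but specifically as $\{A:\ |(A,\tfrac{1}{h}\sym A)|\leq \e h\}$: this is precisely the set on which the Taylor remainder in $D^2\tilde W(\nabla_h u_h)-D^2\tilde W(0)$ is small enough for the Korn-based coercivity to survive. Your choice of $U_h$ based on hyperbolicity alone would not automatically guarantee \ref{itm:A3}, so this refinement matters.
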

\begin{bem}
	We will give a more precise explanation on how the results of \cite{Koch} are applied to our situtation in the appendix. At this point however we want to mention that the neighbourhood $U_h$ can be chosen as 
	\begin{equation*}
		U_h := \bigg\{A\in\Rtimes \;:\; \bigg|\bigg(A, \frac{1}{h}\operatorname{sym}(A)\bigg)\bigg| \leq \e h\bigg\}
	\end{equation*}
	where $\e > 0$ is sufficiently small. With this it follows that as long as $\nabla_h u_h(x,t) \in U_h$ is satisfied the necessary weak coercivity holds, cf. Section \ref{subsec::UniformEstimatesForLinearisedSystem}.
\end{bem}

The strategy for proving the main result is as follows. In a first step we will derive precise estimates for solutions of the linearisation of \eqref{NLS1}--\eqref{NLS4} under the assumption that $u_h$ is small in appropriate norms. To this end we use the natural boundary conditions, differentiate tangentially and utilize the central estimate
\begin{equation}
	\frac{1}{h^2}\Big(D^2\tilde{W}(\nabla_h u_h)\nabla_h w, \nabla_h w\Big)_{L^2(\Omega)} \geq \frac{c_0}{2} \Big\|\frac{1}{h} \e_h(w)\Big\|_{L^2(\Omega)}^2 - CR\bigg|\frac{1}{h}\int_\Omega w\cdot x^\perp dx\bigg|^2, 
	\label{globalCentralCoerciveEstimateForAppendix}
\end{equation}
proven below. By differentiating \eqref{NLS1} in time and $x_1$ we obtain that the respective derivative of $u_h$ solves now the linearised system. Applying then the results of the first step we can deduce with the balance of angular momentum that the solutions are uniformly bounded in $h$ if the initial values and external force are sufficiently small.

\subsection{Uniform Estimates for Linearised System}
\label{subsec::UniformEstimatesForLinearisedSystem}
The linearised system for \eqref{NLS1}--\eqref{NLS4} is given by 
\begin{align}
\partial_t^2 w - \frac{1}{h^2}\operatorname{div}_h (D^2\tilde{W}(\nabla_h u_h) 
\nabla_h w) &= f\quad\text{in }\Omega\times [0, T) \label{linearEQ_1}\\
D^2\tilde{W}(\nabla_h u)[\nabla_h w]\nu &=0 \quad\text{on } (0,L)\times\partial S\times [0, T)\label{linearEQ_2}\\
w \text{ is $L$-periodic}& \text{ in $x_1$ coordinate}\label{linearEQ_3}\\
(w, \partial_t w)|_{t=0} &= (w_0, w_1).\label{linearEQ_4}
\end{align}

We want to show $h$-independent estimates for solutions of the linearised system. For this we assume that $u_h$ satisfies for $0 < h \leq 1$
\begin{align}
\sup_{\substack{|\alpha| \leq 1, |\beta|\leq 2\\\sigma=0,1,2}} \bigg(\Big\|\Big(\nabla_{x,t}^k\frac{1}{h} &\e_h(\partial_z^\alpha u_h), \nabla_{x,t}^k \nabla_h \partial_z^\alpha u_h\Big)\Big\|_{C^0([0,T];L^2(\Omega))} \notag\\
& + \bigg\|\frac{1}{h}\int_\Omega \partial_z^{\alpha+\beta} u_h\cdot x^\perp dx\bigg\|_{C^0([0,T])} \bigg) \leq Rh,
\label{AssumptionsOnU1}
\end{align}
where $R\in (0,R_0]$, with $R_0$ chosen later appropriately small. With this it follows that $u_h$ satisfies
\begin{equation}
\|\nabla_h u_h\|_{C^0([0,T]; H^2_h(\Omega))} + \Big\|\Big(\frac{1}{h} \e_h(u_h), \nabla_h u_h\Big)\Big\|_{C^0([0,T]; L^\infty(\Omega))} \leq CRh
\label{AssumptionsOnU2}
\end{equation}
and
\begin{equation}
\sup_{|\alpha| \leq 2} \bigg(\Big\|\Big(\frac{1}{h}\e_h(\partial_z^\alpha u_h), \nabla_h\partial_z^\alpha u_h\Big)\Big\|_{C^0([0,T]; H^1(\Omega))}  + \bigg\|\frac{1}{h} \int_\Omega\partial_z^\alpha u_h\cdot x^\perp dx\bigg\|_{C^0([0,T])}\bigg) \leq CRh.
\label{AssumptionsOnU3}
\end{equation}
Here $C>0$ is independent of $h$, $R$ and $R_0$. In the following we assume that $R_0$ is chosen so small such that $\tilde{W} \in C^\infty(\overline{B_{CR_0}(0)})$ and Lemma \ref{Lemma_DecompD3TildeW} is applicable.

Using Corollary \ref{Corollary_L1BoundsD3TildeW}, we obtain
\begin{align*}
\bigg| \frac{1}{h^2}\int_0^1 \Big(D^3\tilde{W}(\tau\nabla_h u_h)[\nabla_h u_h, \nabla_h v], \nabla_h w\Big)_{L^2(\Omega)}d\tau\bigg| &\leq \frac{C}{h}\|\nabla_h u_h\|_{H^2_h(\Omega)}\|\nabla_h v\|_{L^2_h(\Omega)}\|\nabla_h w\|_{L^2_h(\Omega)}\\
& \leq CR \|\nabla_h v\|_{L^2_h(\Omega)}\|\nabla_h w\|_{L^2_h(\Omega)}.
\end{align*}
uniformly in $v, w\in H^1(\Omega)$, $0\leq t\leq T$ and $h\in (0,1]$. Thus it follows 
\begin{align}
\frac{1}{h^2}&\Big(D^2\tilde{W}(\nabla_h u_h)\nabla_h v, \nabla_h v \Big)_{L^2(\Omega)}\notag\\
&= \frac{1}{h^2}(D^2\tilde{W}(0)\nabla_h v, \nabla_h v)_{L^2(\Omega)} + \frac{1}{h^2} \int_0^1 (D^3\tilde{W}(\tau\nabla_h u_h)[\nabla_h u_h, \nabla_h v], \nabla_h v)_{L^2(\Omega)}d\tau\notag\\
&\geq \frac{c_0}{h^2} \|\e_h(v)\|^2_{L^2(\Omega)} - \left|\frac{1}{h^2}\int_0^1 (D^3\tilde{W}(\tau\nabla_h u_h)[\nabla_h u_h, \nabla_h v], \nabla_h v)_{L^2(\Omega)}d\tau\right|\notag\\
& \geq c_0 \Big\|\frac{1}{h}\e_h(v)\Big\|^2_{L^2(\Omega)} - CR \|\nabla_h v\|^2_{L^2_h(\Omega)}.\label{Coercivity}
\end{align}
The structure of this subsection is that we will start with a general lemma providing a bound for derivatives in $z$ of $D^2\tilde{W}(\nabla_h u_h)$ in the case that $u_h$ satisfies \eqref{AssumptionsOnU1}. To obtain bounds on higher derivatives we investigate the static problem and apply these subsequently to the evolution equation. This approach leads to uniform estimates for the solution of the linearised system.
\begin{lem}
	Let \eqref{AssumptionsOnU1} hold true and let $t\in [0, T]$ and $0< R\leq R_0$. Then there is some $C=C(R_0)$ independent of $t,R,h$ such that
	\begin{equation}
	\left| \frac{1}{h^2} \left(\partial_{z}^\beta D^2\tilde{W}(\nabla_h u_h 
	(t)) \nabla_h w, \nabla_h v)\right)_{L^2(\Omega)}\right| \leq CR \|\nabla_h w\|_{H^{|\beta|-1}_h(\Omega)} \|\nabla_h v\|_{L^2_h(\Omega)}
	\label{equationAbschätzungenAbleitungD2W}
	\end{equation}
	for $1 \leq |\beta| \leq 3$ and $w\in H^{|\beta|}(\Omega)^3, v\in H^1(\Omega)^3$.
	\label{AbschätzungenAbleitungD2W}
\end{lem}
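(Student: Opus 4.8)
The plan is to prove \eqref{equationAbschätzungenAbleitungD2W} by expanding $\partial_z^\beta D^2\tilde W(\nabla_h u_h)$ via the chain rule and controlling each resulting term using the decomposition of $D^3\tilde W$ from Lemma~\ref{Lemma_DecompD3TildeW} (in the form of Corollary~\ref{Corollary_L1BoundsD3TildeW}) together with the a priori bounds \eqref{AssumptionsOnU1}--\eqref{AssumptionsOnU3}. I would proceed by induction on $|\beta|$. For $|\beta|=1$, say $\partial_z = \partial_t$ or $\partial_{x_1}$, one has
\begin{equation*}
\partial_z D^2\tilde W(\nabla_h u_h) = D^3\tilde W(\nabla_h u_h)[\partial_z\nabla_h u_h, \cdot\,],
\end{equation*}
so that $\tfrac{1}{h^2}(\partial_z D^2\tilde W(\nabla_h u_h)\nabla_h w,\nabla_h v)_{L^2}$ is exactly an expression of the type bounded by \eqref{globalBoundednessOfD3W(Z)(Y1,Y2,Y3)LInftyVersion} in Corollary~\ref{Corollary_L1BoundsD3TildeW}, with $Y_1 = \partial_z\nabla_h u_h$, $Y_2 = \nabla_h w$, $Y_3 = \nabla_h v$ and $Z = \nabla_h u_h$; note $\|Z\|_{L^\infty} \leq CR_0 h \leq \min\{\e,h\}$ after shrinking $R_0$, and $\|(\partial_z\nabla_h u_h, \tfrac1h\sym(\partial_z\nabla_h u_h))\|_{L^\infty} \leq CRh$ by \eqref{AssumptionsOnU3} plus Sobolev embedding on the $2$-dimensional cross section. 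This yields the bound $CR\|\nabla_h w\|_{L^2_h}\|\nabla_h v\|_{L^2_h}$, consistent with $H^{|\beta|-1}_h = H^0_h = L^2_h$.

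For the inductive step, differentiating $|\beta|$ times produces by the Faà di Bruno formula a finite sum of terms of the shape
\begin{equation*}
\frac{1}{h^2}\Big(D^{2+r}\tilde W(\nabla_h u_h)\big[\partial_z^{\gamma_1}\nabla_h u_h, \ldots, \partial_z^{\gamma_r}\nabla_h u_h, \nabla_h w\big], \nabla_h v\Big)_{L^2(\Omega)},
\end{equation*}
with $r\geq 1$, $\sum_j \gamma_j = \beta$, $|\gamma_j|\geq 1$. The case $r=1$ (a single derivative hitting the $\nabla_h u_h$ slot, the remaining $|\beta|-1$ derivatives possibly distributing onto $w$; but here we only differentiate the coefficient, so $w$ and $v$ are fixed and we are bounding a fixed bilinear form) gives $D^3\tilde W(\nabla_h u_h)[\partial_z^\beta\nabla_h u_h, \nabla_h w, \nabla_h v]$, which is estimated by \eqref{Abels_(2.15)} with $Y_1 = \partial_z^\beta\nabla_h u_h$ placed in the $H^2_h$ slot when $|\beta|\leq 3$ — here we need $\|\nabla_h\partial_z^\beta u_h\|_{H^{3-|\beta|}_h}$ to be controlled, which for $|\beta|=1$ is $\|\nabla_h\partial_z u_h\|_{H^2_h}\leq CR h$ from \eqref{AssumptionsOnU1}, for $|\beta|=2$ is $\|\nabla_h\partial_z^2 u_h\|_{H^1_h}\leq CRh$ from \eqref{AssumptionsOnU3}, and for $|\beta|=3$ is $\|\nabla_h\partial_z^3 u_h\|_{L^2_h}$ — wait, but \eqref{AssumptionsOnU1} only gives up to $|\alpha|\leq 1$ for the $H^2_h$-type bounds and $|\alpha|\leq 2$ in \eqref{AssumptionsOnU3}. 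For $|\beta|=3$, the third derivative of the coefficient must be paired against the $w$-slot carrying regularity $H^2_h$, i.e. one uses \eqref{Abels_(2.15)} with $Y_1 = \nabla_h w \in H^2_h$, $Y_2 = \partial_z^\beta\nabla_h u_h\in L^2_h$ (bounded by $CRh$ via \eqref{AssumptionsOnU1}, which does control $\|\nabla_h\partial_z^\alpha u_h\|_{L^2}$ for $|\alpha|\leq 3$ since $\beta$-derivatives with $|\beta|\leq 2$ of things with one more $z$-derivative appear there), $Y_3 = \nabla_h v\in L^2_h$ — this is precisely why the statement only claims $\|\nabla_h w\|_{H^{|\beta|-1}_h}$: the $w$-slot absorbs the missing regularity. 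For the mixed terms $r\geq 2$, the multi-indices $\gamma_j$ all have $|\gamma_j|\leq |\beta|-1\leq 2$, so each $\partial_z^{\gamma_j}\nabla_h u_h$ is bounded in $L^\infty(\Omega)$ (with its symmetric part scaled by $1/h$) by $CRh$ via \eqref{AssumptionsOnU3} and Sobolev embedding; one uses the higher-derivative analogue of Lemma~\ref{Lemma_DecompD3TildeW} for $D^{2+r}\tilde W$ (which is smooth near $0$, hence bounded on $\overline{B_{CR_0}(0)}$), placing all but one factor in $L^\infty$ and pairing $\nabla_h w$, $\nabla_h v$ in $L^2_h$, to again get $CR^{r}\|\nabla_h w\|_{L^2_h}\|\nabla_h v\|_{L^2_h}\leq CR\|\nabla_h w\|_{H^{|\beta|-1}_h}\|\nabla_h v\|_{L^2_h}$ for $R\leq R_0$.

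\textbf{Main obstacle.}
The delicate point is the bookkeeping of which slot carries which Sobolev order, so that the final bound has exactly the asymmetric form $\|\nabla_h w\|_{H^{|\beta|-1}_h}\|\nabla_h v\|_{L^2_h}$ rather than, say, requiring $v$ to have extra regularity or requiring $u_h$-bounds beyond those in \eqref{AssumptionsOnU1}--\eqref{AssumptionsOnU3}. Concretely: whenever a top-order ($|\beta|=3$) derivative lands on the coefficient, the factor $\partial_z^3\nabla_h u_h$ only lives in $L^2_h$ (this is all \eqref{AssumptionsOnU1} provides, via the $\beta$ with $|\beta|\leq 2$ applied to $\partial_z u_h$), and so one of $w,v$ must supply the $H^2_h$ regularity needed to close Hölder in $L^1$; routing it to $w$ is the only choice consistent with the statement, and one must check this is always possible. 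The scaling factors $1/h$ are handled uniformly because Lemma~\ref{Lemma_DecompD3TildeW} gives the gain $|D^3\tilde W(0)|_h\leq Ch$ and $|A(G)|\leq C|G|$ with $|G| = |\nabla_h u_h|\leq CRh$, producing one power of $h$ that cancels against the prefactor $1/h^2$ combined with the scaled norms $\|\cdot\|_{L^2_h}$; this is exactly the computation already carried out in \eqref{Coercivity}, and the present lemma is its higher-order analogue. I expect no genuinely new difficulty beyond this careful distribution of derivatives and a routine higher-order version of Corollary~\ref{Corollary_L1BoundsD3TildeW}.
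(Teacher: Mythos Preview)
Your strategy is the same as the paper's: expand $\partial_z^\beta D^2\tilde W(\nabla_h u_h)$ by the chain rule and estimate each resulting multilinear term. The distribution of derivatives you describe for the $D^3$-terms is exactly what the paper does (for $|\beta|=1$ it places $\nabla_h\partial_z u_h$ in the $H^2_h$-slot via \eqref{Abels_(2.15)} rather than the $L^\infty$-slot as you suggest, but either works; for $|\beta|=3$ it routes $H^2_h$ to $\nabla_h w$ as you correctly identify).

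There is, however, one concrete gap in your treatment of the $r\geq 2$ terms. You assert that for $|\beta|=3$ and $r=2$ every factor $\partial_z^{\gamma_j}\nabla_h u_h$ with $|\gamma_j|\leq 2$ lies in $L^\infty$. This is false when $|\gamma_j|=2$: the assumption \eqref{AssumptionsOnU3} only gives $\partial_z^2\nabla_h u_h\in H^1(\Omega)$, and $H^1(\Omega)\not\hookrightarrow L^\infty(\Omega)$ in three dimensions. (Your remark about ``Sobolev embedding on the $2$-dimensional cross section'' does not rescue this; the relevant embedding for the first-order factors is simply $H^2(\Omega)\hookrightarrow L^\infty(\Omega)$.) The paper closes this term by H\"older with exponents $(4,\infty,4,2)$: place $\partial_z^2\nabla_h u_h\in H^1\hookrightarrow L^4$, the first-order factor in $L^\infty$, $\nabla_h w\in H^1\hookrightarrow L^4$, and $\nabla_h v\in L^2$. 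This still yields $CR\|\nabla_h w\|_{H^1}\|\nabla_h v\|_{L^2}$, which is dominated by the claimed $H^{|\beta|-1}_h$-bound.

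A minor point: no ``higher-order analogue'' of Lemma~\ref{Lemma_DecompD3TildeW} is needed for $D^{2+r}\tilde W$ with $r\geq 2$. The crude sup-norm bound on $D^{2+r}\tilde W$ over $\overline{B_{CR_0}(0)}$ suffices, because the $r\geq 2$ factors $\partial_z^{\gamma_j}\nabla_h u_h$ already supply at least two powers of $h$ to cancel the $1/h^2$ prefactor; the special decomposition is only relevant for the $D^3$-terms, where a single $\nabla_h u_h$-factor contributes only one power of $h$.
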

\begin{proof}
	If $|\beta| = 1$, we obtain by \eqref{AssumptionsOnU1} and \eqref{Abels_(2.15)}
	\begin{align*}
	\bigg| \frac{1}{h^2} \Big(\big(\partial_z^\beta D^2\tilde{W}(\nabla_h u_h (t))\big) \nabla_h w, \nabla_h v\Big)_{L^2(\Omega)}\bigg| & = \bigg|\frac{1}{h^2} \Big(D^3\tilde{W}(\nabla_h u_h (t))[\partial_z^\beta \nabla_h u, \nabla_h w] , \nabla_h v \Big)_{L^2(\Omega)}\bigg|\\
	& \leq \frac{C}{h} \|\nabla_h \partial_z^\beta u_h\|_{H^2_h(\Omega)} 
	\|\nabla_h w\|_{L^2_h(\Omega)}\|\nabla_h v\|_{L^2_h(\Omega)}\\
	& \leq CR \|\nabla_h w\|_{L^2_h(\Omega)} \|\nabla_h v\|_{L^2_h(\Omega)}
	\end{align*}
	If $|\beta| = 2$, it follows for $j$, $k\in \{0, 1\}$ chosen correctly
	\begin{equation}
	\partial_z^\beta D^2\tilde{W}(\nabla_h u_h) = D^3\tilde{W}[\partial_z^\beta\nabla_h u_h] + D^4\tilde{W}(\nabla_h u_h)[\partial_{z_j}\nabla_h u_h, \partial_{z_k}\nabla_h u_h].
	\end{equation}
	For the first term we use \eqref{Abels_(2.16)}
	\begin{equation*}
	\bigg|\frac{1}{h^2} \Big( D^3\tilde{W}(\nabla_h u_h)[\partial_z^\beta\nabla_h u_h, \nabla_h w], \nabla_h v\Big)_{L^2(\Omega)}\bigg| \leq CR \|\nabla_h w\|_{H^1_h(\Omega)} \|\nabla_h v\|_{L^2_h(\Omega)}
	\end{equation*}
	and as $D^4 \tilde{W}(\nabla_h u_h) \in C^0(\overline{\Omega}, \mathcal{L}^4(\Rtimes))$
	\begin{align*}
	\frac{1}{h^2}\int_\Omega \big| D^4\tilde{W}(\nabla_h u_h)[\partial_{z_j}&\nabla_h u_h, \partial_{z_k}\nabla_h u_h, \nabla_h w, \nabla_h v]\big|dx\\
	&\leq \frac{C}{h^2}\int_\Omega |\partial_{z_j}\nabla_h u_h| |\partial_{z_k}\nabla_h u_h| |\nabla_h w| |\nabla_h v| dx\\
	&\leq CR \int_\Omega |\nabla_h w||\nabla_h v| dx \leq CR\|\nabla_h w\|_{L^2(\Omega)}\|\nabla_h v\|_{L^2(\Omega)}
	\end{align*}
	Finally for $|\beta| = 3$ and $j$, $k$ and $l\in \{0,1\}$ such that $\partial_z^\beta = \partial_{z_j}\partial_{z_k}\partial_{z_l}$ we have
	\begin{align*}
	\partial_z^\beta D^2\tilde{W}(\nabla_h u_h) =
	D^3\tilde{W}(\nabla_h u_h)&[\partial_z^\beta\nabla_h u_h] + D^4\tilde{W}(\nabla_h u_h)[\partial_{z_l}\partial_{z_j}\nabla_h u_h, \partial_{z_k}\nabla_h u_h]\\
	&+ D^4\tilde{W}(\nabla_h u_h)[\partial_{z_j}\nabla_h u_h, \partial_{z_l}\partial_{z_k}\nabla_h u_h]\\
	& + D^4\tilde{W}(\nabla_h u_h)[\partial_{z_k}\partial_{z_j}\nabla_h u_h, \partial_{z_l}\nabla_h u_h]\\
	&+ D^5\tilde{W}(\nabla_h u_h)[\partial_{z_j}\nabla_h u_h, \partial_{z_k}\nabla_h u_h, \partial_{z_l} \nabla_h u_h]
	\end{align*}
	The fifth order term can be estimated in the same way as in the case of $|\beta|=2$. As $\partial_{z_l}\partial_{z_j}\nabla_h u_h \in H^1(\Omega) \hookrightarrow L^4(\Omega)$ and $\partial_{z_l}\nabla_h u\in H^2(\Omega) \hookrightarrow L^\infty(\Omega)$, it follows with the Hölder inequality that
	\begin{align*}
	\frac{1}{h^2}\int_\Omega \big|D^4\tilde{W}&(\nabla_h 
	u_h)[\partial_{z_l}\partial_{z_j}\nabla_h u_h, \partial_{z_l}\nabla_h u_h, 
	\nabla_h w, \nabla_h v]\big| dx\\
	&\leq \frac{C}{h^2}\int_\Omega |\partial_{z_l}\partial_{z_j}\nabla_h u_h||\partial_{z_l}\nabla_h u_h||\nabla_h w| |\nabla_h v|dx\\
	&\leq \frac{C}{h^2} \|\partial_{z_l}\partial_{z_j}\nabla_h u_h\|_{L^4(\Omega)}\|\partial_{z_l}\nabla_h u_h\|_{L^\infty(\Omega)}\|\nabla_h w\|_{L^4(\Omega)} \|\nabla_h v\|_{L^2(\Omega)}\\
	&\leq CR \|\nabla_h 
	w\|_{H^1(\Omega)}\|\nabla_h v\|_{L^2(\Omega)}.
	\end{align*}
	For the last term we use \eqref{Abels_(2.15)} 
	\begin{equation*}
	\bigg| \frac{1}{h^2}\Big(D^3\tilde{W}(\nabla_h u_h)[\partial_z^\beta \nabla_h u_h]\nabla_h w, \nabla_h v\Big)_{L^2(\Omega)}\bigg|
	\leq CR \|\nabla_h w\|_{H^2_h(\Omega)} \|\nabla_h v\|_{L^2_h(\Omega)}.
	\end{equation*}
\end{proof}
The first step to obtain higher regularity is done in the following theorem.
\begin{theorem}
	Assume $u_h$ satisfies \eqref{AssumptionsOnU1} and $k=0, 1$. Then there exist $C>0$ 
	and $R_0\in (0,1]$ such that, if $\varphi\in H^{2+k}_{per}(\Omega)$ solves for some $g\in H^k_{per}(\Omega)$ and $g_N\in L^2(0,L;H^{k+\frac{1}{2}}(\partial S))\cap H^{k}(0,L; H^{\frac{1}{2}}(\partial S))$
	\begin{equation}
	\left\{
	\begin{aligned}
		- \frac{1}{h^2} \operatorname{div}_h (D^2\tilde{W}(\nabla_h u_h)\nabla_h\varphi) &= g &&\quad\text{in } \Omega, \\
		D^2\tilde{W}(\nabla_h u_h)[\nabla_h \varphi]\nu\Big|_{(0,L)\times \partial S} &= g_N &&\quad\text{on } \partial\Omega,
	\end{aligned}
	\right.
	\label{LocalStationaryNonZeroNeumannBoundaryEquationHigherRegularity}
	\end{equation}
	then
	\begin{align}\nonumber
			\bigg\| \bigg(\nabla\frac{1}{h}\e_h(\varphi), \nabla_h^2
			\varphi\bigg)\bigg\|_{H^k(\Omega)} &\leq C\bigg(h^2\|g\|_{L^2(\Omega)} + \bigg\|\frac{1}{h} g_N\bigg\|_{L^2(0,L;H^{k+\frac{1}{2}}(\partial S))\cap H^{k}(0,L; H^{\frac{1}{2}}(\partial S))}\\
			& \quad + \bigg\|\frac{1}{h}\e_h(\varphi)\bigg\|_{H^{0,k+1}(\Omega)} + R
			\bigg|\frac{1}{h} \int_\Omega \varphi\cdot x^\perp dx\bigg|\bigg).
	\label{InequalityHigherRegularity}
	\end{align}
	\label{TheoremHigherRegularity}
\end{theorem}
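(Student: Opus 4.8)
Since the hypothesis already provides $\varphi\in H^{2+k}_{per}(\Omega)$, all manipulations below are legitimate and the task is purely quantitative. The guiding observation is that $\nabla_h^2\varphi$ is controlled \emph{pointwise} by $\nabla_h\e_h(\varphi)$ via the Ces\`aro--Volterra identity $\partial_{h,\mu}\partial_{h,\nu}\varphi_i=\partial_{h,\mu}(\e_h\varphi)_{i\nu}+\partial_{h,\nu}(\e_h\varphi)_{i\mu}-\partial_{h,i}(\e_h\varphi)_{\mu\nu}$; hence $\|\nabla_h^2\varphi\|_{H^k}\lesssim\|\nabla_h\e_h(\varphi)\|_{H^k}\le\|\nabla\tfrac1h\e_h(\varphi)\|_{H^k}$ (the last step since $0<h\le1$), so it suffices to bound $\|\nabla\tfrac1h\e_h(\varphi)\|_{H^k(\Omega)}$. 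Of the derivatives occurring there, the purely tangential ones $\partial_{x_1}^l\tfrac1h\e_h(\varphi)$, $1\le l\le k+1$, are bounded directly by $\|\tfrac1h\e_h(\varphi)\|_{H^{0,k+1}}$ (using Korn's inequality \eqref{KornMeanValue} applied to $\partial_{x_1}^l\varphi$, whose rotation corrections $\int_\Omega\partial_{x_1}^l\varphi\cdot x^\perp\,dx$ vanish for $l\ge1$ by $L$-periodicity); every derivative involving at least one transversal direction is, after peeling off $x_1$-derivatives, a transversal derivative of $\tfrac1h\e_h(\partial_{x_1}^p\varphi)$ for some $p\le k$, and for these I use two-dimensional elliptic regularity on the cross-section.

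The point is to reduce \eqref{LocalStationaryNonZeroNeumannBoundaryEquationHigherRegularity}, for a.e.\ fixed $x_1$, to a Neumann problem on $S$. First freeze the coefficient: $D^2\tilde W(\nabla_h u_h)=D^2\tilde W(0)+E_h$ with $E_h:=\int_0^1 D^3\tilde W(s\nabla_h u_h)[\nabla_h u_h]\,ds$, so that, by \eqref{AssumptionsOnU2} and smoothness of $\tilde W$ near $0$, $\|E_h\|_{C^0([0,T];L^\infty)}\le CRh$ and the derivatives of $E_h$ needed below are controlled by \eqref{AssumptionsOnU2}--\eqref{AssumptionsOnU3} and Lemma \ref{AbschätzungenAbleitungD2W}/Corollary \ref{Corollary_L1BoundsD3TildeW}. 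Using $\nabla_h\varphi=\partial_{x_1}\varphi\otimes e_1+h^{-1}(\text{transversal part})$ and the analogous splitting of the boundary operator (on $(0,L)\times\partial S$ the outer normal is $(0,\nu_{\partial S})$, so only transversal derivatives enter its leading part), one extracts from \eqref{LocalStationaryNonZeroNeumannBoundaryEquationHigherRegularity} the constant-coefficient system
\begin{equation*}
-\tfrac1{h^2}\operatorname{div}_{x'}\!\big((D^2W(Id))^\approx\nabla_{x'}\varphi(x_1,\cdot)\big)=\tilde g(x_1,\cdot)\ \text{ in }S,\qquad \tfrac1h\big((D^2W(Id))^\approx\nabla_{x'}\varphi(x_1,\cdot)\big)\nu_{\partial S}=g_N(x_1,\cdot)-a_N(x_1,\cdot)\ \text{ on }\partial S,
\end{equation*}
where $\tilde g,a_N$ collect $g$, all terms carrying at least one $\partial_{x_1}$-derivative of $\varphi$, and the $E_h$-errors.

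By $D^2\tilde W(0)[a\otimes b,a\otimes b]\ge c|a|^2|b|^2$ the operator $(D^2W(Id))^\approx$ is Legendre--Hadamard elliptic, and the associated bilinear form on $H^1(S;\R^3)$ is coercive modulo the finite-dimensional kernel of transversal translations and the in-plane rotation $x'\mapsto x'^\perp$; hence the Neumann problem on the smooth domain $S$ obeys the standard $H^{k+2}$-regularity, the rotation part of the kernel being quantified by Corollary \ref{LemmaKornInequalityInTwoDimensions}. Applying this slicewise to $\partial_{x_1}^p\varphi$ for $p\le k$ (differentiating the system in $x_1$, which is legitimate since the principal coefficient is constant, and where the $H^k(0,L;H^{1/2}(\partial S))$-part of the hypothesis on $g_N$ enters) and integrating in $x_1$ controls the transversal derivatives of $\tfrac1h\e_h(\partial_{x_1}^p\varphi)$ in terms of the right-hand side of \eqref{InequalityHigherRegularity}: the $g$- and $g_N$-contributions give the first two data terms; the $\partial_{x_1}$-derivative contributions to $\tilde g,a_N$ are bounded, after the $x_1$-integration, by $\|\tfrac1h\e_h(\varphi)\|_{H^{0,k+1}}$ (again via \eqref{KornMeanValue} applied to $\partial_{x_1}^l\varphi$ with vanishing rotation corrections, plus an $x'$-integration by parts for the boundary contributions in $a_N$); the $E_h$-errors are estimated using $\|E_h\|_{L^\infty}\le CRh$, \eqref{AssumptionsOnU2}--\eqref{AssumptionsOnU3} and Lemma \ref{AbschätzungenAbleitungD2W}, their top-order piece $E_h\nabla_{x'}^2\varphi$ carrying a factor $O(R)$ so that it is absorbed into the left-hand side for $R_0$ small; finally the slicewise corrections $|\int_S\varphi(x_1,\cdot)\cdot x^\perp\,dx'|$ are recombined into $|\tfrac1h\int_\Omega\varphi\cdot x^\perp\,dx|$ by a one-dimensional Poincar\'e inequality in $x_1$ (using that $\partial_{x_1}\int_S\varphi\cdot x^\perp\,dx'=\int_S\partial_{x_1}\varphi\cdot x^\perp\,dx'$ is, after an $x'$-integration by parts, of the order of $\|\tfrac1h\e_h(\partial_{x_1}\varphi)\|$), the $O(R)$ prefactor on the resulting rotation term coming from this step and the $E_h$-error, in the same way as in the weakly coercive estimate \eqref{Coercivity}. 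Inserting the transversal estimates into the Ces\`aro--Volterra reduction of the first paragraph gives \eqref{InequalityHigherRegularity}.

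The step I expect to be the main obstacle is the precise bookkeeping of the powers of $h$ in the reduction: one must check that every term pushed into $\tilde g$ and $a_N$---the non-elliptic $\partial_{x_1}$-contributions and all $E_h$-errors, including their $x_1$-derivatives when $k=1$---carries exactly the power of $h$ needed either to be matched by one of the data norms in \eqref{InequalityHigherRegularity} or, for the top-order error, to come with a factor proportional to $R$ permitting absorption; and, simultaneously, that the slicewise in-plane rotation corrections produced by Corollary \ref{LemmaKornInequalityInTwoDimensions} recombine into the single global quantity $\tfrac1h\int_\Omega\varphi\cdot x^\perp\,dx$ with the claimed $O(R)$ coefficient.
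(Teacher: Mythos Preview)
Your overall architecture—freeze the coefficient via $D^2\tilde W(\nabla_h u_h)=D^2\tilde W(0)+E_h$, reduce to a two-dimensional Neumann system on the cross-section $S$, apply elliptic regularity slicewise, integrate in $x_1$, and absorb the $O(R)$ error terms—is exactly the route the paper takes. The Ces\`aro--Volterra reduction in your first paragraph is a clean way to phrase what the paper does more explicitly via the block decomposition \eqref{StructureOfNablaZweiW}; either works.

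Where your proposal diverges from the paper, and where it has a genuine gap, is the handling of the slicewise rotation. You apply the two-dimensional regularity estimate to $\varphi(x_1,\cdot)$ itself and then try to ``recombine'' the resulting corrections $\bigl|\int_S\varphi(x_1,\cdot)\cdot x^\perp\,dx'\bigr|$ into the single global quantity $\bigl|\tfrac1h\int_\Omega\varphi\cdot x^\perp\,dx\bigr|$ by a Poincar\'e inequality in $x_1$. The sketched bound on $f'(x_1)=\int_S\partial_{x_1}\varphi\cdot x^\perp\,dx'$ ``after an $x'$-integration by parts'' does not close: writing $\partial_{x_1}\varphi_2=2(\e_h\varphi)_{12}-\tfrac1h\partial_{x_2}\varphi_1$ and similarly for $\varphi_3$, the integration by parts in $x'$ leaves a boundary term $\tfrac1h\int_{\partial S}\varphi_1(x_3\nu_2-x_2\nu_3)\,d\sigma$ that is not controlled by $\|\tfrac1h\e_h(\partial_{x_1}\varphi)\|$. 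More importantly, even if this worked, the rotation correction you obtain this way would carry an $O(1)$ prefactor, not the $O(R)$ stated in \eqref{InequalityHigherRegularity}.

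The paper sidesteps the issue entirely: it applies the cross-sectional regularity estimate not to $\varphi(x_1,\cdot)$ but to $\varphi_{(0)}(x_1,\cdot):=\varphi(x_1,\cdot)-\tfrac1{\mu(S)}(\varphi,x^\perp)_{L^2(S)}\,x^\perp-(\varphi,1)_{L^2(S)}$, which by construction satisfies $\int_S\varphi_{(0)}\cdot x^\perp\,dx'=0$ and $\int_S\varphi_{(0)}\,dx'=0$. Since the subtracted part is affine in $x'$, one has $\nabla_{x'}^2\varphi_{(0)}=\nabla_{x'}^2\varphi$, so the regularity estimate for $\varphi_{(0)}$ (Lemma~\ref{LemmaReducedDimensionHigherRegularity}) gives $\tfrac1{h^2}\|\nabla_{x'}^2\varphi\|_{L^2(S)}$ with \emph{no} slicewise rotation term at all. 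The term $R\bigl|\tfrac1h\int_\Omega\varphi\cdot x^\perp\,dx\bigr|$ in \eqref{InequalityHigherRegularity} arises from a completely different place: the $E_h$-error contributes $CR\|\nabla_h\varphi\|_{H^1(\Omega)}$, and the \emph{global} three-dimensional Korn inequality \eqref{KornMeanValue} applied to $\varphi$ on $\Omega$ produces the rotation integral with the $R$ prefactor already attached. So the step you flag as the main obstacle is in fact a non-issue once you subtract the slicewise kernel projection; your recombination mechanism is unnecessary, and the $O(R)$ coefficient has a different origin than you suggest.
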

\begin{proof}
	We start proving the result in the case $k=0$. Using the fundamental theorem of calculus it follows
	\begin{equation*}
	\operatorname{div}_h(D^2\tilde{W}(\nabla_h u_h)\nabla_h \varphi) = 
	\operatorname{div}_h (D^2\tilde{W}(0)\nabla_h \varphi) + \int_0^1 
	\operatorname{div}_h(D^3\tilde{W}(\tau \nabla_h u_h)[\nabla_h u_h, \nabla_h 
	\varphi])d\tau
	\end{equation*}
	and 
	\begin{align*}
	\divh(D^2\tilde{W}(0)\nabla_h \varphi) = \partial_{x_1}(D^2\tilde{W}(0)\nabla_h 
	\varphi)_1 &+ 
	\frac{1}{h}\operatorname{div}_{x'}(D^2\tilde{W}(0)^\sim \partial_{x_1}\varphi\otimes
	e_1) \\
	& + 
	\frac{1}{h^2}\operatorname{div}_{x'}(D^2\tilde{W}(0)^\approx \nabla_{x'}\varphi),
	\end{align*}
	where $(A)_k$ denotes the $k$th column of $A\in\R^{3\times 3}$. Moreover
	\begin{align}
	g_N &= D^2\tilde{W}(\nabla_h u_h)\nabla_h \varphi\nu\Big|_{(0,L)\times\partial S} = 
	\operatorname{tr}_{\partial S}(D^2\tilde{W}(\nabla_h u_h)\nabla_h \varphi)\nu\notag \\
	& = \operatorname{tr}_{\partial S}\bigg( D^2\tilde{W}(0)\nabla_h \varphi + 
	\int_0^1 D^3\tilde{W}(\tau\nabla_h u_h)[\nabla_h u, \nabla_h \varphi] 
	d\tau\bigg)\nu\notag\\
	& = \operatorname{tr}_{\partial 
		S}\bigg(\frac{1}{h}D^2\tilde{W}(0)^\approx\nabla_{x'}\varphi\bigg)\nu_{\partial S}\notag \\
	& \hspace{0.5cm} + \operatorname{tr}_{\partial 
			S}\underbrace{\bigg(D^2\tilde{W}(0)(\partial_{x_1}\varphi\otimes e_1) 
		+ \int_0^1 D^3\tilde{W}(\tau \nabla_h u_h)[\nabla_h u_h,\nabla_h \varphi] d\tau \bigg)}_{\displaystyle =: r_N}\bigg(\begin{matrix}0\\ \nu_{\partial S}\end{matrix}\bigg) \label{LocalDefinitionOfA_N}
	\end{align}
	where we have used that $\nu = (0, \nu_{\partial S}(x_2, x_3))^T$ and $\nu_{\partial S}$ is the outer unit normal on $\partial S$. Hence, we know that 
	\begin{equation}
		\varphi_{(0)}(x_1, \cdot) := \varphi(x_1, \cdot) -\frac{1}{\mu(S)} (\varphi, x^\perp)_{L^2(S)} x^\perp - (\varphi, 1)_{L^2(S)}
		\label{localDefitionSolutionReducedSystem}
	\end{equation}
	solves for almost all $x_1\in (0,L)$ the system
	\begin{equation}
	\left\{
	\begin{aligned}
	-\frac{1}{h^2} \operatorname{div}_{x'}\Big(D^2\tilde{W}(0)^\approx \nabla_{x'} \varphi(x_1, \cdot)\Big) &= \tilde{g}(x_1, \cdot) &&\quad\text{ in } S,\\
	\frac{1}{h}\Big(D^2\tilde{W}(0)^\approx \nabla_{x'} \varphi(x_1, \cdot)\Big) \nu_{\partial S}\bigg|_{\partial S} &= g_N(x_1, \cdot) - a_N(x_1, \cdot)&&\quad\text{ on } \partial S
	\end{aligned}
	\right.
	\label{reducedSystem}
	\end{equation}
	with $a_N := \operatorname{tr}_{\partial S} (r_N) (0, \nu_{\partial S})^T$,
	\begin{equation}
	\tilde{g} := h^2 g + \mathcal{R}(\varphi) + \int_0^1 
	\operatorname{div}_h(D^3\tilde{W}(\tau \nabla_h u_h)[\nabla_h u_h, \nabla_h 
	\varphi])d\tau
	\label{EqualityFTildeReducedSystem}
	\end{equation}
	and 
	\begin{equation*}
	\mathcal{R}(\varphi) := \partial_{x_1}(D^2\tilde{W}(0)\nabla_h 
	\varphi)_1 + 
	\frac{1}{h}\operatorname{div}_{x'}\big(D^2\tilde{W}(0)^\sim \partial_{x_1}\varphi\otimes
	e_1\big)
	\end{equation*}
	satisfying 
	\begin{equation*}
	\int_S \varphi_{(0)}(x_1, x') dx' =0 \;\;\text{ and }\;\; \int_S \varphi_{(0)}(x_1, x')\cdot x^\perp dx' =0
	\end{equation*}
	for almost all $x_1\in (0, L)$. Then due to the inequality of Lemma \ref{LemmaReducedDimensionHigherRegularity} below it follows
	\begin{align*}
	\frac{1}{h^2}\|\nabla_{x'}^2 \varphi(x_1, \cdot)\|_{L^2(S)} &= \|\nabla_{h, x'}^2 \varphi_{(0)} (x_1, \cdot)\|_{L^2(S)}\\
	&\leq C\bigg(\|\tilde{g}(x_1, \cdot)\|_{L^2(S)} + \frac{1}{h}\|g_N(x_1,\cdot) - a_N(x_1,\cdot)\|_{H^{\frac{1}{2}}(\partial S)}\bigg)
	\end{align*}
	for a.e. $x_1\in (0,L)$. Using the generalised Poincaré's inequality 
	\begin{equation*}
		\|a\|_{L^2(S)} \leq C\bigg(\|\nabla_{x'} a\|_{L^2(S)} + \bigg|\int_{\partial S} a d\sigma(x')\bigg|\bigg)\quad\text{ for all } a\in H^1(S)
	\end{equation*}
	 we obtain with the boundedness of $\tr{\partial S}\colon H^1(S)\to H^\frac{1}{2}(\partial S)$
	\begin{equation*}
		\|q\|_{H^\frac{1}{2}(\partial S)} \leq C\bigg(\|\nabla_{x'}a(q)\|_{L^2(S)} + \bigg|\int_{\partial S} q d\sigma(x')\bigg|\bigg) \quad\text{ for all } q\in H^\frac{1}{2}(\partial S)
	\end{equation*}
	for $a(q)\in H^1(S)$ such that $\operatorname{tr}_{\partial S} (a(q)) = q$ in $H^\frac{1}{2}(\partial S)$. Such an $a(q)$ exists using a classical extension operator $E\colon H^\frac{1}{2}(\partial S) \to H^1(S)$, which is right inverse to $\operatorname{tr}_{\partial S}$. Applying the preceding inequality on $g_N - a_N$  we deduce 
	\begin{align*}
		\|\nabla_{h, x'}^2 \varphi_{(0)} (x_1, \cdot)\|_{L^2(S)} &\leq C\bigg(\|\tilde{g}(x_1,\cdot)\|_{L^2(S)} + \frac{1}{h}\|\nabla_{x'}(a(g_N)(x_1, \cdot) - r_N (x_1, \cdot))\|_{L^2(S)}\\
		&\qquad + \frac{1}{h}\bigg|\int_{\partial S} (g_N - a_N)(x_1,x') d\sigma(x')\bigg|\bigg).
	\end{align*}
	Using Gauss' theorem and \eqref{reducedSystem} leads to
	\begin{align*}
		\int_{\partial S} (g_N - a_N)(x_1,x') d\sigma(x') = h\int_S \tilde{g}(x_1, x') dx'.
	\end{align*}
	Integration with respect to $x_1$ yields
	\begin{equation*}
	\|\nabla_{h, x'}^2 \varphi\|_{L^2(\Omega)} \leq C\bigg(\|\tilde{g}\|_{L^2(\Omega)} + \frac{1}{h}\|\nabla_{x'} a(g_N)\|_{L^2(\Omega)} + \frac{1}{h}\|\nabla_{x'} r_N\|_{L^2(\Omega)}\bigg).
	\end{equation*}
	We can now estimate each term separately, beginning with $\tilde{g}$. As $\mathcal{R}(\varphi)$ is a linear combination of terms involving only $\partial_{x_1} \nabla_h \varphi$ it follows
	\begin{equation}
		\|\mathcal{R}(\varphi)\|_{L^2(\Omega)} \leq C\|\partial_{x_1} \nabla_h \varphi\|_{L^2(\Omega)} \leq  C\bigg\|\frac{1}{h} \e_h(\partial_{x_1}\varphi)\bigg\|_{L^2(\Omega)},
		\label{Equation_RofPhiIsL2}
	\end{equation}
	where we used Korn's inequality and the fact that $\varphi$ is $L$-periodic in $x_1$ direction. Next we have 
	\begin{align}
	\int_0^1 
	\operatorname{div}_h(D^3\tilde{W}(\tau \nabla_h u_h)[\nabla_h u_h, \nabla_h 
	\varphi])d\tau &= \operatorname{div}_h\bigg(\int_0^1 D^3\tilde{W}(\tau \nabla_h u_h)d\tau[\nabla_h u_h, \nabla_h \varphi]\bigg)\notag\\
	& = \operatorname{div}_h \Big(G(\nabla_h u_h) [\nabla_h u_h, \nabla_h \varphi] \Big), \label{localDefinitionOfG}
	\end{align}
	where $G\in C^\infty(\overline{B_\e(0)}, \mathcal{L}^3(\R^{3\times 3}))$ for some suitable $\e >0$. Thus it follows with the identification of $\mathcal{L}^1(\Rtimes)$ with $\Rtimes$ via the standard scalar product
	\begin{align*}
	\|\divh(G(\nabla_h u_h) [\nabla_h u_h, \nabla_h \varphi])\|_{L^2(\Omega)}  &\leq \frac{1}{h} \sup_{|\alpha| = 1}\|\partial_x^\alpha (G(\nabla_h u_h)[\nabla_h u_h, \nabla_h \varphi])\|_{L^2(\Omega; \mathcal{L}^1(\Rtimes))}\\
	&\leq \frac{1}{h} \sup_{|\alpha| = 1} \Big(\|G(\nabla_h u_h)[\nabla_h u_n, \nabla_h \partial_x^\alpha \varphi]\|_{L^2(\Omega; \L^1(\Rtimes))}\\
	& \quad + \|G(\nabla_h u_h)[\nabla_h\partial_x^\alpha u_n, \nabla_h \varphi]\|_{L^2(\Omega; \L^1(\Rtimes))}\\
	& \quad + \|DG(\nabla_h u_h)[\partial_x^\alpha \nabla_h u_h, \nabla_h u_h, \nabla_h \varphi]\|_{L^2(\Omega;\L^1(\Rtimes))}\Big).
	\end{align*}
	As $G\in C^\infty(\overline{B_{\e}(0)}; \L^3(\Rtimes))$ and $\nabla_h u_h \in C^0(\overline{\Omega};\R^3)$ it follows 
	\begin{align*}
		\|G(\nabla_h u_h)[\nabla_h\partial_x^\alpha u_n, \nabla_h \varphi]\|_{L^2(\Omega; \L^1(\Rtimes))} &= \sup_ {\substack{B\in\Rtimes\\ |B|\leq 1}} \bigg(\int_\Omega |G(\nabla_h u_h)[\nabla_h \partial_x^\alpha u_h, \nabla_h \varphi, B]|^2 dx\bigg)^{\frac{1}{2}}\\
		&\leq C \|\nabla_h \partial_x^\alpha u_h\|_{L^4(\Omega)} \|\nabla_h \varphi\|_{L^4(\Omega)} \leq CRh \|\nabla_h \varphi\|_{H^1(\Omega)},
	\end{align*} 
	where we used Hölder inequality, the embedding $H^1(\Omega) \hookrightarrow L^4(\Omega)$ and $\|\nabla_h \partial_x^\alpha u_h\|_{H^1(\Omega)} \leq CRh$ due to \eqref{AssumptionsOnU1}. Analogously using $H^2(\Omega)\hookrightarrow C^0(\overline{\Omega})$ and $\|\nabla_h u_h\|_{H^2(\Omega)} \leq CRh$ it follows
	\begin{equation*}
		\|G(\nabla_h u_h) [\nabla_h u_h, \nabla_h\partial_x^\alpha\varphi]\|_{L^2(\Omega, \L^1(\Rtimes))} \leq CRh \|\nabla_h w\|_{H^1(\Omega)}.
	\end{equation*}
	Finally as $DG\in C^\infty(\overline{B_\e(0)};\L^4(\Rtimes))$ is bounded, we obtain 
	\begin{equation*}
		\|DG(\nabla_h u_h)[\nabla_h \partial_x^\alpha u_h, \nabla_h u_h, \nabla_h \varphi] \|_{L^2(\Omega;\L^1(\Rtimes))} \leq CRh \|\nabla_h \varphi\|_{H^1(\Omega)}.
	\end{equation*}
	Altogether we can conclude 
	\begin{equation}
		\|\divh(G(\nabla_h u_h) [\nabla_h u_h, \nabla_h \varphi])\|_{L^2(\Omega)} \leq CR \|\nabla_h\varphi\|_{H^1(\Omega)}.
		\label{localInequalityForG}
	\end{equation}
	From the definition of $r_N$ it follows
	\begin{align*}
	\frac{1}{h}\|\nabla_{x'} &r_N\|_{L^2(\Omega)}\\
	& = \bigg\|\nabla_{h,x'} (D^2\tilde{W}(0)(\partial_{x_1}\varphi\otimes e_1)) + \int_0^1 \nabla_{h,x'}\big(D^3\tilde{W}(\tau \nabla_h u_h)[\nabla_h u_h, \nabla_h \varphi]\big) d\tau\bigg\|_{L^2(\Omega)}\\
	& \leq \|\nabla_{h,x'} (D^2\tilde{W}(0) (\partial_{x_1}\varphi\otimes e_1))\|_{L^2(\Omega)} + \|\nabla_{h,x'}(G(\nabla_h u_h)[\nabla_h u_h, \nabla_h \varphi])\|_{L^2(\Omega)}
	\end{align*}
	The first term on the right hand side is a linear combination of $\nabla_h \partial_{x_1} \varphi$. Hence
	\begin{equation*}
	\|\nabla_{h, x'} D^2\tilde{W}(0) (\partial_{x_1}\varphi\otimes e_1)\|_{L^2(\Omega)} \leq C\|\nabla_h \partial_{x_1}\varphi\|_{L^2(\Omega)} \leq C\bigg\|\frac{1}{h} \e_h(\varphi)\bigg\|_{H^{0,1}(\Omega)}
	\end{equation*}
	The second term can be bounded analogously to \eqref{localInequalityForG}
	\begin{equation*}
		\|\nabla_{h,x'} G(\nabla_h u_h)[\nabla_h u_h, \nabla_h \varphi]\|_{L^2(\Omega)} \leq CR \|\nabla_h \varphi\|_{H^1(\Omega)}.
	\end{equation*}
	Hence as $\|\nabla_h u\|_{H^1(\Omega)} \leq \|(\nabla_h u, \nabla_h^2 u)\|_{L^2(\Omega)}$ we have 
	\begin{align}
		\|\nabla_{h, x'}^2 \varphi\|_{L^2(\Omega)} & \leq C\bigg(h^2 \|g\|_{L^2(\Omega)} + \bigg\|\frac{1}{h}g_N\bigg\|_{L^2(0,L;H^\frac{1}{2}(\partial S))} + \bigg\|\frac{1}{h} \e_h(\varphi)\bigg\|_{H^{0,1}(\Omega)}\notag\\
		&\qquad + R\|(\nabla_h u, \nabla_h^2 u)\|_{L^2(\Omega)} \bigg). \label{FundamentalInequality_RegularityTheorem}
	\end{align}
	Due to the structure 
	\begin{equation}
	\nabla_h^2 \varphi_j =
	\left(\begin{matrix}
	\partial_{x_1}^2 \varphi_j & (\nabla_{h,x'}\partial_{x_1} \varphi_j)^T\\
	\nabla_{h, x'}\partial_{x_1} \varphi_j & \nabla_{h, x'}^2 \varphi_j
	\end{matrix}\right)
	\label{StructureOfNablaZweiW}
	\end{equation}
	for $j\in \{1,2,3\}$, it follows
	\begin{align*}
		\|\nabla_h^2 \varphi\|_{L^2(\Omega)} \leq C\bigg(\bigg\|\frac{1}{h}\e_h(\varphi)\bigg\|_{L^2(\Omega)} &+ \bigg|\frac{1}{h}\int_\Omega \varphi\cdot x^\perp dx\bigg|\bigg) \\
		& + \bigg\|\frac{1}{h}\e_h(\varphi)\bigg\|_{H^{0,1}(\Omega)} + \|\nabla_{h, x'}^2 \varphi\|_{L^2(\Omega)}.
	\end{align*}
	Hence, by plugging all inequalities into \eqref{FundamentalInequality_RegularityTheorem} and applying Korn's inequality
	\begin{align*}
		\|\nabla_{h, x'}^2 \varphi\|_{L^2(\Omega)} &\leq C\bigg(h^2\|g\|_{L^2(\Omega)} + \bigg\|\frac{1}{h}\e_h(\varphi)\bigg\|_{H^{0,1}(\Omega)} + \bigg\|\frac{1}{h} g_N\bigg\|_{L^2(0,L;H^\frac{1}{2}(\partial S))}\\
		& \qquad + R \bigg|\frac{1}{h} \int_\Omega \varphi\cdot x^\perp dx\bigg| \bigg) + CR \|(\nabla_{h,x'}^2 \varphi)\|_{L^2(\Omega)}
	\end{align*}
	Using an absorption argument for $R_0 \in (0,1]$ sufficiently small and the structure in \eqref{StructureOfNablaZweiW} it follows
	\begin{align*}
		\|\nabla_h^2 \varphi\|_{L^2(\Omega)} &\leq \|\partial_{x_1}^2\varphi\|_{L^2(\Omega)} + 2\|\nabla_{h,x'} \partial_{x_1} \varphi\|_{L^2(\Omega)} + \|\nabla_{h,x'}^2 \varphi\|_{L^2(\Omega)}\\
		&\leq  C\bigg(h^2\|g\|_{L^2(\Omega)} + \bigg\|\frac{1}{h}g_N\bigg\|_{L^2(0,L;H^\frac{1}{2}(\partial S))} + \bigg\|\frac{1}{h}\e_h(\varphi)\bigg\|_{H^{0,1}(\Omega)} +  R\bigg|\frac{1}{h} \int_\Omega \varphi\cdot x^\perp dx\bigg|\bigg)
	\end{align*}
	as 
	\begin{equation*}
		\|\nabla_{h,x'} \partial_{x_1}\varphi\|_{L^2(\Omega)} \leq \|\nabla_h \partial_{x_1}\varphi\|_{L^2(\Omega)} \leq C\bigg\|\frac{1}{h} \e_h(\partial_{x_1} \varphi)\bigg\|_{L^2(\Omega)} \leq C\bigg\|\frac{1}{h} \e_h(\varphi)\bigg\|_{H^{0,1}(\Omega)}
	\end{equation*}
	and $\|\partial_{x_1}^2 \varphi\|_{L^2(\Omega)} \leq \left\|\frac{1}{h} \e_h(\varphi)\right\|_{H^{0,1}(\Omega)}$ holds. Finally, \eqref{InequalityHigherRegularity} for $k=0$ follows from 
	\begin{equation*}
		\bigg\|\nabla \frac{1}{h}\e_h(\varphi)\bigg\|_{L^2(\Omega)} \leq C\bigg( \|\nabla_h^2 \varphi\|_{L^2(\Omega)} + \bigg\|\frac{1}{h} \e_h(\varphi)\bigg\|_{H^{0,1}(\Omega)}\bigg).
	\end{equation*}
	In the case of $k=1$ we prove \eqref{InequalityHigherRegularity} in two steps. The first step consists in differentiating \eqref{LocalStationaryNonZeroNeumannBoundaryEquationHigherRegularity} in direction of $x_1$ and with \eqref{InequalityHigherRegularity} for $k=0$ we obtain
		\begin{align*}
			\|\nabla_h^2 \partial_{x_1} \psi\|_{L^2(\Omega)} \leq C\bigg(h^2&\|q\|_{H^{0,1}(\Omega)} + \bigg\|\frac{1}{h} \e_h(\psi)\bigg\|_{H^{0,2}(\Omega)} + \bigg\|\frac{1}{h}q_N\bigg\|_{H^1(0,L;H^\frac{1}{2}(\partial S))}\\
			& + R \bigg|\frac{1}{h} \int_\Omega \psi\cdot x^\perp dx\bigg| + Rh \|\nabla_h^2 \psi\|_{H^1(\Omega)}\bigg).
		\end{align*}
		In the second step we apply classical higher regularity theory similarly to Lemma \ref{LemmaReducedDimensionHigherRegularity} below, see for instance \cite[Theorem 4.18]{McLean}, and analogous inequalities as in the first case.
\end{proof}
\begin{lem}
	Let the assumptions of Theorem \ref{TheoremHigherRegularity} be satisfied and $\varphi_{(0)}$ be as in \eqref{localDefitionSolutionReducedSystem} a solution of \eqref{reducedSystem}. Then for almost all $x_1\in (0,L)$ it holds
	\begin{equation}
	\frac{1}{h^2} \|\varphi_{(0)}\|_{H^2(S)} \leq C\bigg(\|\tilde{g}(x_1, \cdot)\|_{L^2(S)} + \frac{1}{h}\|(g_N - a_N)(x_1,\cdot)\|_{H^{\frac{1}{2}}(\partial S)}\bigg)
	\label{ReducedDimensionHigherRegularityInequality}
	\end{equation}
	for some $C>0$ independent of $\varphi, \tilde{g}, g_N, a_N$ and $x_1$.
	\label{LemmaReducedDimensionHigherRegularity}
\end{lem}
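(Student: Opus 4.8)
The plan is to freeze $x_1$, scale the small parameter out of \eqref{reducedSystem}, and then combine a classical $H^2$-estimate for a Neumann problem on $S$ with a Korn-type absorption argument. Fix $x_1\in(0,L)$ and set $\psi:=\varphi_{(0)}(x_1,\cdot)\in H^2(S;\R^3)$, $g:=\tilde g(x_1,\cdot)\in L^2(S)$, $q:=(g_N-a_N)(x_1,\cdot)\in H^{1/2}(\partial S)$, and abbreviate $\mathcal A v:=-\divxtilde\big(D^2\tilde{W}(0)^\approx\nabla_{x'}v\big)$. Multiplying the two lines of \eqref{reducedSystem} by $h^2$ and by $h$ respectively, $\psi$ solves $\mathcal A\psi=h^2g$ in $S$ with conormal datum $\big(D^2\tilde{W}(0)^\approx\nabla_{x'}\psi\big)\nu_{\partial S}=hq$ on $\partial S$, and by construction, cf.\ \eqref{localDefitionSolutionReducedSystem}, it satisfies $\int_S\psi\,dx'=0$ and $\int_S\psi\cdot x^\perp\,dx'=0$. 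Since all coefficients are constant and $x_1$-independent, every constant produced below is independent of $x_1$ and of $h$.

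The first main step is the a priori estimate
\[
\|\psi\|_{H^2(S)}\le C\big(\|h^2g\|_{L^2(S)}+\|hq\|_{H^{1/2}(\partial S)}+\|\psi\|_{H^1(S)}\big).
\]
For this I would first check that $\mathcal A$ is strongly elliptic. Given $\xi\in\R^2\setminus\{0\}$ and $a\in\R^3$, set $\eta:=(0,\xi)^T\in\R^3$ and $G:=a\otimes\eta$; since $G$ has vanishing first column, the symbol of $\mathcal A$ at $\xi$ contracted with $a$ equals $\sum_{\alpha,\beta=2,3}\sum_{i,j}b^{\alpha\beta}_{ij}\xi_\alpha a_i\xi_\beta a_j=D^2\tilde{W}(0)[G,G]$, which by \eqref{globalInequalityCoercivityOfDW} is $\ge c_1|\sym(a\otimes\eta)|^2\ge\tfrac{c_1}{2}|a|^2|\xi|^2$; thus the Legendre--Hadamard condition holds. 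Together with the major symmetry $b^{\alpha\beta}_{ij}=b^{ji}_{\beta\alpha}$ from \eqref{EquationIndexSwichOfSystemOperator}, the natural Neumann problem for $\mathcal A$ on the smooth planar domain $S$ is a regular elliptic boundary value problem, so the displayed estimate follows from classical elliptic regularity theory for second-order systems (e.g.\ \cite[Theorem~4.18]{McLean}), which in turn rests on a coercivity estimate and the difference-quotient method up to $\partial S$.

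The second main step is to absorb $\|\psi\|_{H^1(S)}$. Testing the weak form of the rescaled equation against $\psi$ and using \eqref{globalInequalityCoercivityOfDW} gives
\[
c_1\big\|\sym G_\psi\big\|_{L^2(S)}^2\le h^2\Big|\int_S g\cdot\psi\,dx'\Big|+h\Big|\int_{\partial S}q\cdot\psi\,d\sigma\Big|,
\]
where $G_\psi$ is $\nabla_{x'}\psi$ with a zero first column adjoined; a pointwise computation gives $|\sym G_\psi|^2=\tfrac12|\nabla_{x'}\psi_1|^2+|\e(\psi_2,\psi_3)|^2$. As every component of $\psi$ has zero mean over $S$ and $\int_S(\psi_2,\psi_3)\cdot(-x_3,x_2)\,dx'=\int_S\psi\cdot x^\perp\,dx'=0$, the Poincar\'e inequality together with the two-dimensional Korn inequality of Corollary~\ref{LemmaKornInequalityInTwoDimensions}, applied to $(\psi_2,\psi_3)$, yield $\|\psi\|_{H^1(S)}\le C\|\sym G_\psi\|_{L^2(S)}$. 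Estimating the right-hand side of the last display by $\big(h^2\|g\|_{L^2(S)}+h\|q\|_{H^{1/2}(\partial S)}\big)\|\psi\|_{H^1(S)}$ and dividing gives $\|\psi\|_{H^1(S)}\le C\big(h^2\|g\|_{L^2(S)}+h\|q\|_{H^{1/2}(\partial S)}\big)$; inserting this into the first step and dividing by $h^2$ yields \eqref{ReducedDimensionHigherRegularityInequality}. I expect the only genuinely non-routine point to be the first step — verifying that the partially restricted tensor $D^2\tilde{W}(0)^\approx$ still defines a strongly elliptic system whose natural Neumann problem is regular — after which the rest is a routine testing-and-Korn absorption.
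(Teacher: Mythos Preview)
Your proposal is correct and follows essentially the same approach as the paper: both verify that $D^2\tilde{W}(0)^\approx$ inherits the Legendre--Hadamard condition, obtain the $H^1$-bound by testing against $\varphi_{(0)}$ and invoking the two-dimensional Korn inequality (Corollary~\ref{LemmaKornInequalityInTwoDimensions}) together with Poincar\'e on the mean-free, $x^\perp$-orthogonal function, and then upgrade to $H^2$ via standard elliptic regularity (the paper also cites \cite[Theorem~4.18]{McLean}). The only cosmetic differences are that you scale out $h$ at the outset and present the $H^2$ a~priori estimate before the $H^1$ absorption, whereas the paper keeps the $h$-weights throughout and does the $H^1$ step first.
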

\begin{proof}
	As we have $\varphi_{(0)}(x_1, \cdot) \in H^2(S)$, one can test the equation \eqref{reducedSystem} with $\varphi_{(0)}(x_1, \cdot)$ to obtain
	\begin{equation*}
	\frac{1}{h^2} \Big(D^2\tilde{W}(0)^\approx \nabla_{x'} \varphi_{(0)}, \nabla_{x'} \varphi_{(0)}\Big)_{L^2(S)} = (\tilde{g}, \varphi_{(0)})_{L^2(S)} + \frac{1}{h}(g_N - a_N, \varphi_{(0)})_{L^2(\partial S)}
	\end{equation*}
	for almost all $x_1\in (0,L)$. Using now the Legendre-Hadamard condition, Korn's inequality in two dimensions and Poincaré's inequality it follows, due to the fact that $\varphi_{(0)}$ is mean value free
	\begin{align*}
	&\Big(D^2\tilde{W}(0)^\approx \nabla_{x'} \varphi_{(0)}, \nabla_{x'} \varphi_{(0)}\Big)_{L^2(S)}  =  \Big(D^2\tilde{W}(0) \bigg(\begin{matrix}0\\ \nabla_{x'}\end{matrix}\bigg) \varphi_{(0)}, \bigg(\begin{matrix}0\\ \nabla_{x'}\end{matrix}\bigg)\varphi_{(0)}\Big)_{L^2(S)}\\
	&\quad \geq  c_0 \bigg\|\operatorname{sym}\bigg(\bigg(\begin{matrix}0\\ \nabla_{x'}\end{matrix}\bigg)\varphi_{(0)}\bigg)\bigg\|^2_{L^2(S)}\geq C \|\nabla_{x'} \varphi_{(0)}\|_{L^2(S)}^2 - C \bigg|\int_S \varphi_{(0)}\cdot x^\perp dx'\bigg|^2\\
	&\quad \geq C \|\varphi_{(0)}\|_{H^1(S)}^2.
	\end{align*}
	Here we used that $\int_S \varphi_{(0)}\cdot x^\perp dx' = 0$. Thus applying Young's inequality and an absorption argument we are led to
	\begin{equation*}
	\frac{1}{h^2}\|\varphi_{(0)}\|_{H^1(S)} \leq C\bigg(\|\tilde{g}\|_{L^2(S)} + \frac{1}{h} \|g_N - a_N\|_{H^\frac{1}{2}(\partial S)}\bigg).
	\end{equation*}
	For higher regularity we apply standard results, found in \cite{GiaquintaMartinazzi} or \cite{McLean}.
	When considering the data one notices that $\tilde{g}(x_1, \cdot)\in L^2(S; \R^3)$ and $(g_N - a_N)(x_1, \cdot)\in H^{\frac{1}{2}}(\partial S; \R^3)$ for almost every $x_1\in (0,L)$ holds. This is a consequence on one hand from the assumptions on $g\in L^2(\Omega)$ and $g_N\in L^2(0,L;H^\frac{1}{2}(S))$. On the other hand we know that $\mathcal{R}(\varphi)\in L^2(\Omega)$ due to \eqref{Equation_RofPhiIsL2}. Moreover, because $\tr{\partial S}\colon H^1(S) \to H^\frac{1}{2}(\partial S)$ it follows $a_N\in H^\frac{1}{2}(\partial S)$. The Legendre-Hadamard condition of $D^2\tilde{W}(0)^\approx$ is inherited from $D^2\tilde{W}(0)$. Finally due to Korn's inequality in two dimensions and Poincaré's inequality with mean value we can conclude that $\mathcal{L}$, defined as
	\begin{equation*}
		\mathcal{L} u := -\sum_{\alpha, \beta=1}^{2} \partial_\alpha(B^{\alpha\beta}\partial_\beta u) := -\operatorname{div}_{x'} \Big(D^2\tilde{W}(0)^\approx \nabla_{x'} u\Big)
	\end{equation*}
	 is weakly coercive on $H^1(S)$. Hence we obtain
	\begin{equation*}
	\frac{1}{h^2}\|\varphi_{(0)}\|_{H^2(S;\R^3)} \leq C\bigg(\frac{1}{h^2}\|\varphi_{(0)}\|_{H^1(S)} + \frac{1}{h}\|(g_N - a_N)(x_1,\cdot)\|_{H^{\frac{1}{2}}(\partial S)} + \|\tilde{g}\|_{L^2(S)}\bigg).
	\end{equation*}
	Putting the above inequalities together leads to the desired result.
\end{proof}
In case we have homogeneous Neumann boundary conditions, we can refine Theorem \ref{TheoremHigherRegularity} in the following way.
\begin{Coro}
	Assume $u_h$ fulfils \eqref{AssumptionsOnU1} and $k=0, 1$. If $w\in H^{2+k}_{per}(\Omega)$ satisfies 
	\begin{align}
		\left\{
		\begin{aligned}
		- \frac{1}{h^2} \operatorname{div}_h (D^2\tilde{W}(\nabla_h u_h)\nabla_h w) &= f \qquad\text{in } \Omega\\
		D^2\tilde{W}(\nabla_h u_h)[\nabla_h w]\nu\Big|_{(0,L)\times \partial S} &= 0 \qquad\text{on } \partial\Omega
		\end{aligned}
		\right.
		\label{StationaryZeroNeumannBoundaryEquation}
	\end{align}
	for some $f\in H^k_{per}(\Omega)$, then it holds
	\begin{equation}
		\bigg\| \bigg(\nabla \frac{1}{h}\e_h(w), \nabla_h^2 w\bigg)\bigg\|_{H^k(\Omega)} \leq C\bigg(\|f\|_{H^k(\Omega)} + R\bigg|\frac{1}{h}\int_\Omega w\cdot x^\perp dx\bigg|\bigg)
		\label{InequalityHigherRegularity_improved}
              \end{equation}
              for some $C>0$ independent $w,f,h,t$, and $u_h$.
	\label{CorollaryHigherRegularity_improved}
\end{Coro}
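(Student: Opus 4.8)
The strategy is to invoke Theorem~\ref{TheoremHigherRegularity} with $\varphi=w$, $g=f$ and $g_N=0$. Since $h\le 1$, its data term is then bounded by $\|f\|_{H^k(\Omega)}$, and we obtain
\[
\bigg\|\bigg(\nabla\tfrac1h\e_h(w),\nabla_h^2 w\bigg)\bigg\|_{H^k(\Omega)}\le C\bigg(\|f\|_{H^k(\Omega)}+\bigg\|\tfrac1h\e_h(w)\bigg\|_{H^{0,k+1}(\Omega)}+R\bigg|\tfrac1h\int_\Omega w\cdot x^\perp\,dx\bigg|\bigg).
\]
Thus it only remains to estimate the anisotropic norm $\|\tfrac1h\e_h(w)\|_{H^{0,k+1}(\Omega)}=\big(\sum_{l=0}^{k+1}\|\partial_{x_1}^l\tfrac1h\e_h(w)\|_{L^2(\Omega)}^2\big)^{1/2}$ by $\|f\|_{H^k}$, the quantity $\rho:=|\tfrac1h\int_\Omega w\cdot x^\perp\,dx|$ (with a small prefactor), and a small multiple of $\|(\nabla\tfrac1h\e_h(w),\nabla_h^2 w)\|_{H^k}$, which is then absorbed into the left-hand side.

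To this end I use that differentiation in $x_1$ respects the structure of \eqref{StationaryZeroNeumannBoundaryEquation}: since $u_h$ and $w$ are $L$-periodic in $x_1$ and $\nu$ is independent of $x_1$, for $0\le l\le k+1$ the field $\partial_{x_1}^l w$ is again $L$-periodic and $\big(\partial_{x_1}^l(D^2\tilde W(\nabla_h u_h)\nabla_h w)\big)\nu$ vanishes on $(0,L)\times\partial S$. Pairing the $l$-times differentiated equation with $\partial_{x_1}^l w$ and integrating by parts — the lateral boundary term vanishes by the latter identity and the end-face terms cancel by periodicity — yields
\[
\frac1{h^2}\Big(D^2\tilde W(\nabla_h u_h)\nabla_h\partial_{x_1}^l w,\nabla_h\partial_{x_1}^l w\Big)_{L^2(\Omega)}=\Big(\partial_{x_1}^l f,\partial_{x_1}^l w\Big)_{L^2(\Omega)}-\frac1{h^2}\sum_{j=1}^{l}\binom{l}{j}\Big(\big(\partial_{x_1}^j D^2\tilde W(\nabla_h u_h)\big)\nabla_h\partial_{x_1}^{l-j}w,\nabla_h\partial_{x_1}^l w\Big)_{L^2(\Omega)},
\]
where for $l\ge 1$ the first term on the right is read as $-(\partial_{x_1}^{l-1}f,\partial_{x_1}^{l+1}w)_{L^2(\Omega)}$ whenever $f$ lacks $x_1$-regularity (legitimate since $w\in H^{2+k}$ is periodic in $x_1$). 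Applying the coercivity estimate \eqref{Coercivity} to the left-hand side and Korn's inequality \eqref{KornMeanValue} to $\partial_{x_1}^l w$ — noting $\int_\Omega\partial_{x_1}^l w\cdot x^\perp\,dx=0$ for $l\ge 1$ by periodicity, so the Korn remainder survives only for $l=0$ — the left-hand side is bounded below by $\tfrac{c_0}2\|\tfrac1h\e_h(\partial_{x_1}^l w)\|_{L^2(\Omega)}^2$ (minus $CR\rho^2$ if $l=0$), provided $R_0$ is small.

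On the right-hand side, the force term is handled by Cauchy--Schwarz, Poincaré's inequality in $x_1$ and Korn's inequality (again using that $\partial_{x_1}^m w$ has vanishing $x_1$-mean for $m\ge 1$); for the $l=0$ term one splits off the mean and the rotation component of $w$ and observes that $\int_\Omega f\cdot x^\perp\,dx$ equals, up to an error bounded via Corollary~\ref{Corollary_L1BoundsD3TildeW} by $CR\|\nabla_h w\|_{L^2_h(\Omega)}$, the quantity $\tfrac1{h^2}(D^2\tilde W(0)\nabla_h w,\nabla_h x^\perp)_{L^2(\Omega)}=0$, since $D^2\tilde W(0)$ depends only on symmetric parts while $\nabla_h x^\perp$ is skew. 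Each commutator term is estimated by Lemma~\ref{AbschätzungenAbleitungD2W} (with $\partial_{x_1}^j$ playing the role of $\partial_z^\beta$, $|\beta|=j\le k+1\le 3$), which gives $CR\|\nabla_h\partial_{x_1}^{l-j}w\|_{H^{j-1}_h(\Omega)}\|\nabla_h\partial_{x_1}^l w\|_{L^2_h(\Omega)}$; by the block structure \eqref{StructureOfNablaZweiW}, Korn's inequality and an induction on $l$, the first factor is controlled by $\|(\nabla\tfrac1h\e_h(w),\nabla_h^2 w)\|_{H^k(\Omega)}$ plus already-bounded lower-order quantities, the second by $C\|\tfrac1h\e_h(\partial_{x_1}^l w)\|_{L^2(\Omega)}$ for $l\ge 1$. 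Summing over $l=0,\dots,k+1$, using Young's inequality and feeding the result into the bound from Theorem~\ref{TheoremHigherRegularity}, all occurrences of $\|(\nabla\tfrac1h\e_h(w),\nabla_h^2 w)\|_{H^k}$ on the right carry the factor $CR$; choosing $R_0$ small enough to absorb them gives \eqref{InequalityHigherRegularity_improved}, first for $k=0$ and then, using the case $k=0$, for $k=1$.

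The hardest part will be the bookkeeping for the commutator terms $\partial_{x_1}^j D^2\tilde W(\nabla_h u_h)$: one must organise the induction on $l$ and the applications of Lemma~\ref{AbschätzungenAbleitungD2W} so that these terms reproduce the full left-hand side norm of \eqref{InequalityHigherRegularity_improved} only with the small prefactor $R$, while relying on nothing beyond $L$-periodicity (so that no boundary contribution is lost) and on no more than the available $H^{2+k}$-regularity of $w$; for $k=1$ one additionally has to keep track of the mixed $x_1$- and $x'$-derivatives entering the two-dimensional higher-regularity step hidden in Theorem~\ref{TheoremHigherRegularity}.
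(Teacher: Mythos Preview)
Your proposal is correct and follows essentially the same approach as the paper: apply Theorem~\ref{TheoremHigherRegularity} with $g_N=0$, then eliminate the term $\|\tfrac1h\e_h(w)\|_{H^{0,k+1}}$ by testing the (tangentially differentiated) equation against tangential derivatives of $w$, using the coercivity~\eqref{Coercivity}, Korn's inequality, and Lemma~\ref{AbschätzungenAbleitungD2W} for the commutators, with an absorption for $R_0$ small. The only organisational differences are that the paper tests the undifferentiated equation with $\partial_{x_1}^{2l}w_{(0)}$ (subtracting the mean and the $x^\perp$-component first and then integrating by parts), whereas you test the $l$-times differentiated equation with $\partial_{x_1}^l w$ and handle the rotation component afterwards via the identity $(f,x^\perp)_{L^2}=\tfrac1{h^2}(D^2\tilde W(\nabla_h u_h)\nabla_h w,\nabla_h x^\perp)_{L^2}$; these are equivalent after one integration by parts in $x_1$.
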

\begin{proof}
	Applying Theorem \ref{TheoremHigherRegularity} with $\varphi := w$, $g:=f$ and $g_N := 0$ we are lead to 
	\begin{align}\nonumber
          &\bigg\|\bigg(\nabla\frac{1}{h}\e_h(w), \nabla_h^2 w\bigg)\bigg\|_{H^k(\Omega)}\\
          &\quad \leq C\bigg(h^2\|f\|_{H^k(\Omega)} + \bigg\|\frac{1}{h}\e_h(w)\bigg\|_{H^{0,1+k}(\Omega)} + R\bigg|\frac{1}{h}\int_\Omega w\cdot x^\perp dx \bigg|\bigg).
		\label{HigherRegularityInequalityForWZeroNeumannBoundaryConditions}
	\end{align}
	Consequently we want to eliminate the second term on the right hand side. For $k=0$ it follows from \eqref{StationaryZeroNeumannBoundaryEquation} that
	\begin{equation}
	\frac{1}{h^2} \Big(D^2\tilde{W}(\nabla_h u_h) \nabla_h w, \nabla_h \varphi\Big)_{L^2(\Omega)} = (f, \varphi)_{L^2(\Omega)}
	\label{localStationaryWeakEquation}
	\end{equation}
	for $\varphi\in H^1_{per}(\Omega)$. Now we want to choose $\varphi = \partial_{x_1}^{2l}w_{(0)}$ where $w_{(0)} := w - \frac{1}{\mu(\Omega)}(w, x^\perp)_{L^2(\Omega)}x^\perp - \frac{1}{|\Omega|}(w, 1)_{L^2(\Omega)}$ and $l=0,1$.
	First we start with $l=0$. Periodicity of $\nabla_h u_h$, $w$ and $w_{(0)}$, a Taylor expansion and creation of a $-\frac{1}{\mu(\Omega)}(w, x^\perp)_{L^2(\Omega)}x^\perp$ part lead to
	\begin{align*}
	\frac{1}{h^2}\Big(D^2\tilde{W}(0)\nabla_h w_{(0)}, &\nabla_h w_{(0)}\Big)_{L^2(\Omega)} + \frac{1}{h^2}\int_0^1 \Big(D^3\tilde{W}(\tau \nabla_h u_h)[\nabla_h u_h, \nabla_h w_{(0)}], \nabla_h w_{(0)} \Big)_{L^2(\Omega)} d\tau\\
	& + \frac{1}{h^2} \int_0^1 \Big(D^3\tilde{W}(\tau\nabla_h u_h)\Big[\nabla_h u_h, \nabla_h \frac{1}{\mu(\Omega)}(w, x^\perp)_{L^2} x^\perp\Big], \nabla_h w_{(0)}\Big)_{L^2(\Omega)} d\tau\\
	& = (f, w_{(0)})_{L^2(\Omega)}.
	\end{align*}
	Here we used that
	\begin{equation*}
	\Big(D^2\tilde{W}(0) \nabla_h x^\perp, \nabla_h w_{(0)} \Big)_{L^2(\Omega)} = 0
	\end{equation*}
	due to \eqref{globalInequalityCoercivityOfDW}. Moreover, because of the specific construction of $w_{(0)}$ we can deduce  
	\begin{equation*}
	\left\|\frac{1}{h}\e_h(w_{(0)})\right\|_{L^2} = \left\|\frac{1}{h}\e_h(w)\right\|_{L^2} \;\;\text{ and }\;\; \int_\Omega w_{(0)} \cdot x^\perp dx = 0.
	\end{equation*}
	Hence,
	\begin{align*}
	\bigg\|\frac{1}{h}\e_h(w)\bigg\|^2_{L^2(\Omega)} &\leq |(f, w_{(0)})_{L^2(\Omega)}| + \bigg|\frac{1}{h^2}\int_0^1 \Big(D^3\tilde{W}(\tau \nabla_h u_h)[\nabla_h u_h, \nabla_h w_{(0)}], \nabla_h w_{(0)} \Big)_{L^2(\Omega)} d\tau\bigg|\\
	& \quad + \bigg|\frac{1}{h^2} \int_0^1 \Big(D^3\tilde{W}(\tau\nabla_h u_h)\Big[\nabla_h u_h, \nabla_h \frac{1}{\mu(\Omega)}(w, x^\perp)_{L^2} x^\perp\Big], \nabla_h w_{(0)}\Big)_{L^2(\Omega)} d\tau\bigg|.\\
	\end{align*}
	Now, by the Hölder, Young and Poincaré inequality 
	\begin{align*}
	|(f, w_{(0)})_{L^2(\Omega)}| &\leq C(\epsilon)\|f\|^2_{L^2(\Omega)} + \epsilon \|w_{(0)}\|^2_{L^2} \leq C(\epsilon)\|f\|^2_{L^2(\Omega)} + \epsilon \|\nabla  w_{(0)}\|^2_{L^2}\\
	& \leq C(\epsilon)\|f\|^2_{L^2(\Omega)} + \epsilon \bigg\|\frac{1}{h} \e_h(w)\bigg\|^2_{L^2}
	\end{align*}
	for any $\epsilon > 0$. Secondly with Corollary \ref{Corollary_L1BoundsD3TildeW}, \eqref{AssumptionsOnU2} and $\|\nabla_h w_{(0)}\|_{L^2(\Omega)} \leq C\left\|\frac{1}{h}\e_h (w_{(0)})\right\|_{L^2(\Omega)}$, we obtain
	\begin{equation*}
	\bigg|\frac{1}{h^2}\int_0^1 \Big(D^3\tilde{W}(\tau \nabla_h u_h)[\nabla_h u_h, \nabla_h w_{(0)}], \nabla_h w_{(0)} \Big)_{L^2(\Omega)} d\tau\bigg| \leq CR \bigg\|\frac{1}{h}\e_h(w)\bigg\|^2_{L^2(\Omega)}
	\end{equation*}
	and
	\begin{align*}
	&\bigg|\frac{1}{h^2} \int_0^1 \Big(D^3\tilde{W}(\tau\nabla_h u_h)\Big[\nabla_h u_h,\nabla_h \frac{1}{\mu(\Omega)}(w, x^\perp)_{L^2} x^\perp\Big], \nabla_h w_{(0)}\Big)_{L^2(\Omega)} d\tau\bigg| \\
	& \leq \frac{CR}{h} \|(w, x^\perp)_{L^2(\Omega)}\|_{L^2(\Omega)} \|\nabla_h w_{(0)}\|_{L^2(\Omega)} \leq CR \bigg(\bigg\|\frac{1}{h}\e_h(w)\bigg\|^2_{L^2(\Omega)} + \bigg|\frac{1}{h}\int_\Omega w\cdot x^\perp dx\bigg|^2 \bigg).
	\end{align*}
	For sufficiently small $\epsilon > 0$ and $R_0\in (0,1]$ it follows
	\begin{equation}
	\bigg\|\frac{1}{h}\e_h(w)\bigg\|^2_{L^2(\Omega)} \leq C\|f\|^2_{L^2(\Omega)} + CR \bigg|\frac{1}{h}\int_\Omega w\cdot x^\perp dx\bigg|^2.
	\label{LocalSymWInequality}
	\end{equation}
	In order to use $\varphi = \partial_{x_1}^2 w_{(0)}$, we exploit the density of $C^\infty_{per}(\overline{\Omega})$ in $H^1_{per}(\Omega)$. For $\psi\in C^\infty_{per}(\overline{\Omega})$ we can use $\varphi = \partial_{x_1}\psi$ as a testfunction and obtain 
	\begin{equation*}
		\frac{1}{h^2} \Big(D^2\tilde{W}(\nabla_h u_h) \nabla_h w, \nabla_h \partial_{x_1} \psi\Big)_{L^2(\Omega)} = (f, \partial_{x_1} \psi)_{L^2(\Omega)}.
	\end{equation*}
	Integration by parts, the periodicity and the homogeneous Neumann  boundary conditions lead to 
\begin{equation*}
	\frac{1}{h^2} \Big(\partial_{x_1}(D^2\tilde{W}(\nabla_h u_h) \nabla_h w), \nabla_h \psi\Big)_{L^2(\Omega)} = (f, \partial_{x_1} \psi)_{L^2(\Omega)}.
\end{equation*}
The density of $C^\infty_{per}(\overline\Omega)$ implies now that the latter equation holds for $\psi = \partial_{x_1} w_{(0)}\in H^1_{per}(\Omega)$ as well. Thus
\begin{align*}
	\frac{1}{h^2}\Big(D^2\tilde{W}(\nabla_h u_h)\nabla_h\partial_{x_1} &w_{(0)}, \nabla_h \partial_{x_1} w_{(0)}\Big)_{L^2(\Omega)} \\
	&= (f, \partial_{x_1}^2 w_{(0)})_{L^2{(\Omega)}} -  \frac{1}{h^2}\Big((\partial_{x_1} D^2\tilde{W}(\nabla_h u_h))\nabla_h w, \nabla_h \partial_{x_1} w_{(0)}\Big)_{L^2(\Omega)}
\end{align*}
as $\partial_{x_1} w_{(0)} = \partial_{x_1} w$. Using $\int_\Omega \partial_{x_1} w\cdot x^\perp dx = 0$, we can conclude
\begin{align*}
	\bigg\| \frac{1}{h}\e_h(\partial_{x_1} w)\bigg\|_{L^2(\Omega)}^2 &\leq |(f, \partial_{x_1}^2 w)_{L^2(\Omega)}| + \bigg|\frac{1}{h^2}\Big(\partial_{x_1}D^2\tilde{W}(\nabla_h u_h)\nabla_h w, \nabla_h \partial_{x_1} w\Big)_{L^2(\Omega)}\bigg|\\
	&\leq C(\epsilon)\|f\|^2_{L^2(\Omega)} + \epsilon\bigg\|\frac{1}{h} \e_h(\partial_{x_1}w)\bigg\|^2_{L^2(\Omega)} + CR \|\nabla_h w\|_{L^2_h(\Omega)}\|\nabla_h\partial_{x_1} w\|_{L^2_h(\Omega)}.
\end{align*}
In the preceding calculation Hölder's and Young's inequality are used as well as Lemma \ref{AbschätzungenAbleitungD2W}. Hence, by Korn's inequality and \eqref{LocalSymWInequality} 
\begin{align*}
	\bigg\| \frac{1}{h} \e_h(\partial_{x_1} w)\bigg\|^2_{L^2(\Omega)} &\leq C\|f\|^2_{L^2(\Omega)} + CR \Big(\|\nabla_h w\|^2_{L^2_h(\Omega)} + \|\nabla_h \partial_{x_1} w\|^2_{L^2_h(\Omega)}\Big)\\
	& \leq  C\bigg(\|f\|^2_{L^2(\Omega)} + R\bigg\|\frac{1}{h}\e_h (w)\bigg\|^2_{L^2(\Omega)} + R\bigg|\frac{1}{h}\int_\Omega w\cdot x^\perp dx\bigg|^2\bigg)\\
	&\qquad + CR \bigg\|\frac{1}{h}\e_h (\partial_{x_1} w)\bigg\|^2_{L^2(\Omega)}
\end{align*}
Choosing $R_0 \in (0,1]$ sufficiently small leads to
\begin{equation}
	\bigg\| \frac{1}{h} \e_h(\partial_{x_1} w)\bigg\|^2_{L^2(\Omega)} \leq C\bigg(\|f\|^2_{L^2(\Omega)} + R\bigg\|\frac{1}{h}\e_h (w)\bigg\|^2 + R\bigg|\frac{1}{h}\int_\Omega w\cdot x^\perp dx\bigg|^2\bigg).
	\label{LocalSymPartialX1WInequality}
\end{equation}
Combining \eqref{LocalSymWInequality} and \eqref{LocalSymPartialX1WInequality} with \eqref{InequalityHigherRegularity} the case $k=0$ follows.
	
For $k=1$ it remains to estimate $\|\frac{1}{h}\e_h(\partial^2_{x_1} w)\|_{L^2 (\Omega)}$. The bound can be seen as follows: Analogously as above we can choose first $\varphi = \partial^2_{x_1} \psi$ for $\psi\in C^\infty_{per}(\overline{\Omega})$. Thus, twice integration by parts leads to
\begin{equation*}
	\frac{1}{h^2} \Big(\partial_{x_1}^2(D^2\tilde{W}(\nabla_h u_h)\nabla_h w),\nabla_h \psi\Big)_{L^2(\Omega)} = - (\partial_{x_1} f, \partial_{x_1}\psi)_{L^2(\Omega)}.
\end{equation*}
Then the density of $C^\infty_{per}(\overline{\Omega}) \subset H^1_{per}(\Omega)$ implies that the latter equaltiy holds for $\psi = \partial_{x_1}^2 w$. Using
\begin{align*}
	\Big(\partial_{x_1}^2&(D^2\tilde{W}(\nabla_h u_h)\nabla_h w),\nabla_h \partial_{x_1}^2 w\Big)_{L^2(\Omega)} = \Big(D^2\tilde{W}(\nabla_h u_h) \nabla_h \partial_{x_1}^2 w, \nabla_h \partial_{x_1}^2 w\Big)_{L^2(\Omega)}\\
	& + 2 \Big((\partial_{x_1} D^2\tilde{W}(\nabla_h u_h)) \nabla_h \partial_{x_1} w, \nabla_h \partial_{x_1}^2 w\Big)_{L^2(\Omega)} + \Big((\partial_{x_1}^2 D^2\tilde{W}(\nabla_h u_h)) \nabla_h w, \nabla_h \partial_{x_1}^2 w\Big)_{L^2(\Omega)}.
\end{align*}
By virtue of Lemma \ref{AbschätzungenAbleitungD2W}, we obtain
\begin{align*}
	\bigg\| \frac{1}{h}\e_h(\partial_{x_1}^2 w)\bigg\|_{L^2(\Omega)}^2 &\leq |(\partial_{x_1}f, \partial_{x_1}^3 w)_{L^2(\Omega)}| + CR \|\nabla_h\partial_{x_1} w\|_{L^2_h(\Omega)}\|\nabla_h \partial_{x_1}^2 w\|_{L^2_h(\Omega)}\\
	&\qquad + CR \|\nabla_h w\|_{H^1_h(\Omega)}\|\nabla_h \partial_{x_1}^2 w\|_{L^2_h(\Omega)}\\
	&\leq C(\epsilon)\|f\|^2_{H^{0,1}(\Omega)} + \epsilon \bigg\|\frac{1}{h} \e_h(\partial_{x_1}^2 w)\bigg\|_{L^2(\Omega)}^2 +  CR\|\nabla_h\partial_{x_1} w\|_{L^2(\Omega)}^2 \\
	&\qquad + CR \|\nabla_h w\|_{H^1_h(\Omega)}^2 + CR \|\nabla_h \partial_{x_1}^2 w\|_{L^2_h(\Omega)}^2\\
	& \leq C(\epsilon)\|f\|_{H^{0,1}(\Omega)} + (\epsilon + CR)\bigg\|\frac{1}{h} \e_h(\partial_{x_1}^2 w)\bigg\|_{L^2(\Omega)}^2 \\
	&\qquad + CR \bigg\|\frac{1}{h} \e_h(w)\bigg\|_{H^{0,1}(\Omega)} + CR \bigg|\int_\Omega w\cdot x^\perp dx\bigg|
\end{align*}
Finally, choosing $\epsilon$ and $R_0$ small, using an absorption argument and applying \eqref{LocalSymWInequality} and \eqref{LocalSymPartialX1WInequality}, leads to the desired result. 
\end{proof}
Before we use the preceding Corollary \ref{CorollaryHigherRegularity_improved} to obtain higher regularity bounds for \eqref{linearEQ_1}--\eqref{linearEQ_4}, we state standard first order energy estimate for the solution of the dynamic system.
\begin{lem}
	Let $0 < T < \infty$, $h\in (0,1]$, $0 < R \leq R_0$ be given, where $R_0$ is chosen small enough, but independent of $h$. Furthermore, assume that $u_h$ satisfies \eqref{AssumptionsOnU1}.
	For every $f\in L^1(0,T; L^2(\Omega))$, $w_0\in H^1_{per}(\Omega)$ and $w_1\in L^2(\Omega)$ there exists a unique solution $w\in C^0([0,T]; H^1_{per}(\Omega))\cap C^1([0,T]; L^2(\Omega))$ of the system \eqref{linearEQ_1}--\eqref{linearEQ_4} satisfying
	\begin{align}
	\bigg\|\bigg(\partial_t w, \frac{1}{h}\e_h(w)\bigg)\bigg\|_{C^0([0,T];L^2)}^2 \leq C_L \bigg(\|w_1\|_{L^2(\Omega)}^2& + |A_0| + \|f\|^2_{L^1(0,T; L^2(\Omega))} \label{BasicInequality}\\
	&+ (1+T)R \bigg\|\frac{1}{h}\int_\Omega w\cdot x^\perp dx\bigg\|_{C^0([0,T])}^2\bigg)\notag
	\end{align}
	where $C_L>0$ is independent of $h, w, w_0, w_1, f, A_0$ and 
	\[A_0 = \frac{1}{h^2}\Big(D^2\tilde{W}(\nabla_h u_{h}|_{t=0})\nabla_h w_0, \nabla_h w_0\Big)_{L^2(\Omega)}.\]
	\label{Lemma_BasicInequality}
\end{lem}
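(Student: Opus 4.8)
The plan is to establish existence, uniqueness and the uniform estimate separately. For each fixed $h\in(0,1]$ the system \eqref{linearEQ_1}--\eqref{linearEQ_4} is a linear second order hyperbolic system whose coefficient $D^2\tilde W(\nabla_h u_h)$ is, by \eqref{AssumptionsOnU2} and $H^2(\Omega)\hookrightarrow C^0(\overline\Omega)$, continuous on $\overline\Omega\times[0,T]$ and takes values in a ball where $\tilde W$ is smooth; combined with the weak coercivity \eqref{Coercivity} and Lemma~\ref{LemmaKornIntegralForm} it satisfies a G\r{a}rding inequality of the form $\tfrac1{h^2}(D^2\tilde W(\nabla_h u_h)\nabla_h v,\nabla_h v)_{L^2}\ge\alpha_h\|\nabla v\|_{L^2}^2-\beta_h\|v\|_{L^2}^2$ with $\alpha_h>0$, where the constants may depend on $h$. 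Hence existence and uniqueness of $w\in C^0([0,T];H^1_{per})\cap C^1([0,T];L^2)$ follow from the classical theory for such equations (equivalently, from a Galerkin approximation in a basis of $H^1_{per}(\Omega;\R^3)$, the estimates below furnishing the compactness to pass to the limit). The substance of the lemma is the $h$-uniform bound \eqref{BasicInequality}, which I would derive as follows.

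First I would test \eqref{linearEQ_1} with $\partial_t w$. Integration by parts in $x$, the natural boundary condition \eqref{linearEQ_2} on $(0,L)\times\partial S$ and the $x_1$-periodicity annihilate all boundary terms, so that with
\begin{equation*}
E(t):=\tfrac12\|\partial_t w(t)\|_{L^2(\Omega)}^2+\tfrac1{2h^2}\bigl(D^2\tilde W(\nabla_h u_h(t))\nabla_h w(t),\nabla_h w(t)\bigr)_{L^2(\Omega)}
\end{equation*}
one obtains the energy identity
\begin{equation*}
\tfrac{d}{dt}E(t)=(f,\partial_t w)_{L^2(\Omega)}+\tfrac1{2h^2}\Bigl(\bigl(\partial_t D^2\tilde W(\nabla_h u_h)\bigr)\nabla_h w,\nabla_h w\Bigr)_{L^2(\Omega)},
\end{equation*}
where $\partial_t D^2\tilde W(\nabla_h u_h)=D^3\tilde W(\nabla_h u_h)[\partial_t\nabla_h u_h]$. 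This is justified at the Galerkin level, where $w$ is smooth in $t$ and lies in the test space; passing to the limit it becomes the integral inequality $E(t)\le E(0)+\int_0^t(\cdots)\,ds$, which is all that is needed below.

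Next, assembling the estimates: writing $K:=\bigl\|\tfrac1h\int_\Omega w\cdot x^\perp\,dx\bigr\|_{C^0([0,T])}^2$, the central coercive estimate \eqref{globalCentralCoerciveEstimateForAppendix} (itself a consequence of \eqref{Coercivity} and Lemma~\ref{LemmaKornIntegralForm} after shrinking $R_0$) gives
\begin{equation*}
E(t)\ge\mathcal E(t)-\tfrac{CR}{2}K,\qquad \mathcal E(t):=\tfrac12\|\partial_t w(t)\|_{L^2}^2+\tfrac{c_0}{4}\bigl\|\tfrac1h\e_h(w(t))\bigr\|_{L^2}^2,
\end{equation*}
while $E(0)=\tfrac12\|w_1\|_{L^2}^2+\tfrac12 A_0\le\tfrac12\bigl(\|w_1\|_{L^2}^2+|A_0|\bigr)$. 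The remainder term in the identity is controlled by Lemma~\ref{AbschätzungenAbleitungD2W} (case $|\beta|=1$) followed by Lemma~\ref{LemmaKornIntegralForm}, yielding a bound $CR\|\nabla_h w\|_{L^2_h(\Omega)}^2\le CR\bigl(\|\tfrac1h\e_h(w)\|_{L^2}^2+K\bigr)\le CR\,\mathcal E(t)+CRK$, and the force term by Cauchy--Schwarz, $(f,\partial_t w)_{L^2}\le\sqrt2\,\|f\|_{L^2}\sqrt{\mathcal E(t)}$.

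Finally, integrating in time and absorbing: from $\mathcal E(t)\le E(t)+\tfrac{CR}{2}K\le E(0)+\tfrac{CR}{2}K+\int_0^t(\cdots)\,ds$ together with Young's inequality on the force term, one gets, with $m(t):=\sup_{[0,t]}\mathcal E$,
\begin{equation*}
m(t)\le C\bigl(\|w_1\|_{L^2}^2+|A_0|+\|f\|_{L^1(0,T;L^2)}^2+(1+T)RK\bigr)+CR\int_0^t m(s)\,ds,
\end{equation*}
the term $(1+T)RK$ arising precisely from the coercivity gap $\tfrac{CR}{2}K$ together with the time integral $CRTK$ of the remainder. Gronwall's lemma then gives $m(T)\le C_L\bigl(\|w_1\|_{L^2}^2+|A_0|+\|f\|_{L^1(0,T;L^2)}^2+(1+T)RK\bigr)$ with $C_L=C_L(T,R_0,\Omega,W)$ depending on $T$ only through a factor $e^{CR_0T}$ and independent of $h,w,w_0,w_1,f,A_0$; since $\bigl\|(\partial_t w,\tfrac1h\e_h(w))\bigr\|_{C^0([0,T];L^2)}^2\le C\,m(T)$, this is \eqref{BasicInequality}. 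The main obstacle throughout is the \emph{weak} coercivity of $\tfrac1{h^2}(D^2\tilde W(\nabla_h u_h)\nabla_h\cdot,\nabla_h\cdot)$ — it controls $\|\tfrac1h\e_h(\cdot)\|_{L^2}$ only up to the rigid rotations $x\mapsto\alpha x^\perp$, not the full scaled gradient — which forces one to carry the rotational moment $\tfrac1h\int_\Omega w\cdot x^\perp$ as an extra datum on the right-hand side and to verify at each step that the induced loss enters with a prefactor $R$; that moment will only be controlled later, through the balance of angular momentum of the nonlinear solution.
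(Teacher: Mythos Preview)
Your proposal is correct and is exactly the ``classical energy estimate'' the paper has in mind; in fact the paper does not spell out a proof here but refers to \cite{AmeismeierDiss}, and your argument (test with $\partial_t w$, use the energy identity, control the leading term via \eqref{globalCentralCoerciveEstimateForAppendix}/\eqref{Coercivity} together with Lemma~\ref{LemmaKornIntegralForm}, bound the remainder by Lemma~\ref{AbschätzungenAbleitungD2W}, then Gronwall) is the same scheme the paper carries out in full for the next-order estimate in Theorem~\ref{TheoremSecondOrderInequality}. The only cosmetic remark is that the resulting $C_L$ carries the factor $e^{CR_0T}$, which is consistent with the lemma since $C_L$ is not claimed to be independent of $T$ or $R_0$.
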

\begin{proof}
	The proof uses classical energy estimates and can be found in \cite{AmeismeierDiss}, Lemma 5.2.3.
\end{proof}
The second order regularity bounds are given in the following theorem.
\begin{theorem}
	Let $0 < T < \infty$, $h\in (0,1]$, $0 < R \leq R_0$ be given, where $R_0$ is chosen small enough, but independent of $h$. Furthermore, let $u_h$ satisfy \eqref{AssumptionsOnU1}.
	Assume $w\in  \bigcap_{k=0}^2 C^k([0,T]; H^{2-k}(\Omega))$ is the unique solution of the system \eqref{linearEQ_1}--\eqref{linearEQ_4} for some $f\in W^1_1(0,T; L^2(\Omega))$, $w_0\in H^2_{per}(\Omega)$ and $w_1\in H^1_{per}(\Omega)$, then there exist constants $C_{L1}$, $C_1>0$ (independent of $h, T, R, w, f, A_k$) such that
	\begin{align*}
	\bigg\|\bigg(\partial_t^2 w, \nabla_{x,t}\frac{1}{h}\e_h(w)&, \nabla_h^2 w\bigg)\bigg\|^2_{C^0([0,T];L^2(\Omega))} \leq C_{L1} e^{C_1 T R } \bigg(\|f\|^2_{W^1_
		1(0,T; L^2)} + \|(w_1, w_2, f|_{t=0})\|^2_{L^2}\\
	& \qquad + |(A_0, A_1)| + (1+T)R \max_{\sigma = 0,1}  \bigg\|\frac{1}{h}\int_\Omega \partial_t^\sigma w \cdot x^\perp dx\bigg\|_{C^0(0,T)}^2\bigg) 
	\end{align*}
	holds, where
	\begin{equation*}
	A_k := \frac{1}{h^2}\big(D^2\tilde{W}(\nabla_h u_h|_{t=0})\nabla_h w_k, \nabla_h w_k\big)_{L^2(\Omega)} \quad \text{ for } k=0, 1
	\end{equation*}
	and
	\begin{equation*}
	w_2 := \frac{1}{h^2}\divh\big(D^2\tilde{W}(\nabla_h u_h|_{t=0})\nabla_h w_0\big) + f|_{t=0}.
	\end{equation*}
	\label{TheoremSecondOrderInequality}
\end{theorem}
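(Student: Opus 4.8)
The plan is to prove the three bounds by three devices and combine them: (i) the first-order energy estimate of Lemma~\ref{Lemma_BasicInequality} applied to $w$ itself, which controls $\|(\partial_t w,\tfrac1h\e_h(w))\|_{C^0([0,T];L^2)}$ and therefore all lower-order quantities reused below; (ii) a second, analogous energy estimate obtained by differentiating \eqref{linearEQ_1}--\eqref{linearEQ_4} in time, which yields $\|(\partial_t^2 w,\tfrac1h\e_h(\partial_t w))\|_{C^0([0,T];L^2)}$; and (iii) the slicewise elliptic estimate of Corollary~\ref{CorollaryHigherRegularity_improved}, which recovers $\nabla_h^2 w$ and $\nabla\tfrac1h\e_h(w)$. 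Since $\nabla_{x,t}\tfrac1h\e_h(w)$ splits into $\tfrac1h\e_h(\partial_t w)$, handled by (ii), and $\nabla\tfrac1h\e_h(w)$, handled by (iii), no separate $x_1$-energy estimate is needed.

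For (ii), set $v:=\partial_t w$. Differentiating \eqref{linearEQ_1}--\eqref{linearEQ_2} in time shows that $v$ solves $\partial_t^2 v-\tfrac1{h^2}\divh(D^2\tilde{W}(\nabla_h u_h)\nabla_h v)=\partial_t f+\tfrac1{h^2}\divh\big((\partial_t D^2\tilde{W}(\nabla_h u_h))\nabla_h w\big)$ with the inhomogeneous Neumann condition $\big(D^2\tilde{W}(\nabla_h u_h)[\nabla_h v]+(\partial_t D^2\tilde{W}(\nabla_h u_h))[\nabla_h w]\big)\nu=0$ on $(0,L)\times\partial S$. Keeping the extra source in divergence form, the boundary contributions cancel in the weak formulation, which is tested with $\partial_t v=\partial_t^2 w$. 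Writing the elastic pairing as $\tfrac{d}{dt}$ of the associated energy minus a $\partial_t D^2\tilde{W}$ remainder and integrating over $(0,t)$, the only dangerous term, $\tfrac1{h^2}\big((\partial_t D^2\tilde{W}(\nabla_h u_h))\nabla_h w,\nabla_h\partial_t v\big)$, contains the uncontrolled third derivative $\nabla_h\partial_t^2 w$; integrating it by parts in time replaces it by $\tfrac1{h^2}\big((\partial_t^2 D^2\tilde{W}(\nabla_h u_h))\nabla_h w,\nabla_h v\big)$, $\tfrac1{h^2}\big((\partial_t D^2\tilde{W}(\nabla_h u_h))\nabla_h v,\nabla_h v\big)$ and a time-boundary term. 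All of these are bounded with Lemma~\ref{AbschätzungenAbleitungD2W} (cases $|\beta|=1,2$), Korn's inequality~\eqref{KornMeanValue} and the coercivity bound~\eqref{Coercivity}/\eqref{globalCentralCoerciveEstimateForAppendix}; wherever the $|\beta|=2$ case forces $\|\nabla_h w\|_{H^1_h}$, one substitutes the pointwise bound $\|\nabla_h^2 w(t)\|_{L^2}\le C\big(\|f(t)-\partial_t^2 w(t)\|_{L^2}+R\big|\tfrac1h\int_\Omega w\cdot x^\perp\,dx\big|\big)$ from step (iii). After Young's inequality, absorption for $R\le R_0$ sufficiently small, and Gronwall's lemma, this gives the bound for $\|(\partial_t^2 w,\tfrac1h\e_h(\partial_t w))\|_{C^0([0,T];L^2)}^2$ with the $e^{C_1TR}$ factor; here the initial $v$-energy equals $\tfrac12\|w_2\|_{L^2}^2+\tfrac12 A_1$ because $v|_{t=0}=w_1$ and $\partial_t v|_{t=0}=\partial_t^2 w|_{t=0}=w_2$ by \eqref{linearEQ_1}, while $\|w_1\|_{L^2}^2$, $|A_0|$ and the rotation term for $w$ enter through step (i).

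For (iii), read \eqref{linearEQ_1}--\eqref{linearEQ_2} at each fixed $t$ as the stationary problem $-\tfrac1{h^2}\divh(D^2\tilde{W}(\nabla_h u_h(t))\nabla_h w(t))=f(t)-\partial_t^2 w(t)$ with homogeneous Neumann data. Since $u_h$ satisfies \eqref{AssumptionsOnU1}, Corollary~\ref{CorollaryHigherRegularity_improved} with $k=0$ gives $\big\|\big(\nabla\tfrac1h\e_h(w(t)),\nabla_h^2 w(t)\big)\big\|_{L^2}\le C\big(\|f(t)-\partial_t^2 w(t)\|_{L^2}+R\big|\tfrac1h\int_\Omega w(t)\cdot x^\perp\,dx\big|\big)$. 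Taking the supremum in $t$, inserting the bound on $\partial_t^2 w$ from step (ii), and using $\|f\|_{C^0([0,T];L^2)}\le\|f|_{t=0}\|_{L^2}+\|\partial_t f\|_{L^1(0,T;L^2)}$ (which is why $f|_{t=0}$ appears on the right-hand side) completes the estimates for $\nabla_h^2 w$ and $\nabla\tfrac1h\e_h(w)$; assembling (i)--(iii) gives the asserted inequality.

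The main obstacle is the apparent circularity between (ii) and (iii): the time-differentiated energy estimate needs a bound on the second spatial derivatives $\nabla_h^2 w$, which is only available from the elliptic estimate (iii), whereas that estimate involves $\partial_t^2 w$, whose bound is the very output of (ii). It is resolved by noting that the elliptic inequality of (iii) is purely pointwise in $t$ and presupposes no control of $\partial_t^2 w$: one records it first, feeds it into the remainder terms of (ii) where the resulting $\|\partial_t^2 w\|_{L^2}^2$ and $\|\tfrac1h\e_h(\partial_t w)\|_{L^2}^2$ either carry a small factor $\propto R$ and are absorbed or are pushed into the Gronwall integral, and only afterwards reinserts the now-established bound on $\partial_t^2 w$ into (iii). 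A further technical point is the inhomogeneous Neumann condition for $\partial_t w$, handled by leaving the extra source in divergence form, together with the time integration by parts needed to eliminate the uncontrolled term $\nabla_h\partial_t^2 w$.
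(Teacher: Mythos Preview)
Your proposal is correct and follows essentially the same route as the paper: differentiate \eqref{linearEQ_1} in time, test with $\partial_t^2 w$, write the elastic term as a total time derivative, integrate the dangerous pairing $\tfrac{1}{h^2}\big((\partial_t D^2\tilde{W})\nabla_h w,\nabla_h\partial_t^2 w\big)$ by parts in time, estimate the resulting remainders via Lemma~\ref{AbschätzungenAbleitungD2W} and Korn, feed in the pointwise elliptic bound of Corollary~\ref{CorollaryHigherRegularity_improved} for $\nabla_h^2 w$, combine with Lemma~\ref{Lemma_BasicInequality}, and close with Gronwall. Your discussion of the apparent circularity between (ii) and (iii) and its resolution by first recording the pointwise elliptic estimate is exactly how the paper proceeds as well.
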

\begin{proof}
	Differentiating \eqref{linearEQ_1} with respect to $t$ and testing the result with $\partial_t^2 w$ yields
	\begin{align}
	\begin{split}
	\Big(\partial_t^3 w, \partial_t^2 w \Big)_{L^2(\Omega)}& + \frac{1}{h^2}\Big(D^2\tilde{W}(\nabla_h u_h)\nabla_h \partial_t w,\nabla_h \partial_t^2 w \Big)_{L^2(\Omega)}\\
	&= \Big(\partial_t f, \partial_t^2 w \Big)_{L^2(\Omega)} - \frac{1}{h^2}\Big(\partial_t D^2\tilde{W}(\nabla_h u_h)\nabla_h w, \nabla_h \partial_t^2 w \Big)_{L^2(\Omega)}.
	\end{split}	
	\end{align}	
	As
	\begin{align*}
		\frac{d}{dt} \Big(D^2\tilde{W}(\nabla_h u_h)\nabla_h \partial_t w, \nabla_h \partial_t w \Big)_{L^2(\Omega)} &= 2\Big(D^2\tilde{W}(\nabla_h u_h)\nabla_h \partial_t w, \nabla_h \partial_t^2 w \Big)_{L^2(\Omega)} \\
		&\quad + \Big(\partial_t D^2\tilde{W}(\nabla_h u_h) \nabla_h \partial_t w, \nabla_h \partial_t w \Big)_{L^2(\Omega)}
	\end{align*}
	and
	\begin{align*}
		\frac{d}{dt} \Big(\partial_t &D^2\tilde{W}(\nabla_h u_h) \nabla_h w, \nabla_h \partial_t w \Big)_{L^2(\Omega)} = \Big(\partial_t D^2\tilde{W}(\nabla_h u_h)\nabla_h w, \nabla_h \partial_t^2 w \Big)_{L^2(\Omega)}\\
		& + \Big(\partial_t D^2\tilde{W}(\nabla_h u_h)\nabla_h \partial_t w, \nabla_h \partial_t w \Big)_{L^2(\Omega)} + \Big(\partial_t^2 D^2\tilde{W}(\nabla_h u_h)\nabla_h w, \nabla_h \partial_t w \Big)_{L^2(\Omega)}
	\end{align*}
	it follows
	\begin{align*}
	\frac{d}{dt}&\frac{1}{2} \bigg[\|\partial_t^2 w\|_{L^2(\Omega)}^2 + \frac{1}{h^2}\Big(D^2\tilde{W}(\nabla_h u_h)\nabla_h \partial_t w, \nabla_h \partial_t w\Big)_{L^2(\Omega)}  \bigg] \\
	& \leq |(\partial_t f, \partial_t^2 w)_{L^2(\Omega)}| + \frac{3}{2}\bigg|\frac{1}{h^2}\Big(\partial_t D^2\tilde{W}(\nabla_h u_h)\nabla_h \partial_t w, \nabla_h \partial_t w\Big)_{L^2(\Omega)}\bigg|\\
	& \quad +\bigg|\frac{1}{h^2} \Big(\partial_t^2 D^2\tilde{W}(\nabla_h u_h) \nabla_h w, \nabla_h \partial_t w\Big)_{L^2(\Omega)}\bigg| -\frac{d}{dt}\frac{1}{h^2}\Big(\partial_t D^2\tilde{W}(\nabla_h u_h)\nabla_h w, \nabla_h \partial_t w\Big)_{L^2(\Omega)}.
	\end{align*}
	Due to \eqref{equationAbschätzungenAbleitungD2W} and Korn's inequality
	\begin{align*}
	\bigg|\frac{1}{h^2}\Big(\partial_t D^2\tilde{W}(\nabla_h u_h)\nabla_h \partial_t w, \nabla_h \partial_t w\Big)_{L^2(\Omega)}\bigg| &\leq CR \|\nabla_h \partial_t w\|_{L_h^2(\Omega)}^2 \\
	& \leq CR\bigg(\bigg\|\frac{1}{h}\e_h(\partial_t w)\bigg\|_{L^2(\Omega)}^2 + \frac{1}{h^2} \bigg|\int_\Omega \partial_t w\cdot x^\perp\bigg|^2 \bigg),\\
	\bigg|\frac{1}{h^2} \Big(\partial_t^2 D^2\tilde{W}(\nabla_h u_h) \nabla_h w, \nabla_h \partial_t w\Big)_{L^2(\Omega)}\bigg| &\leq CR \|\nabla_h w\|_{H^1_h(\Omega)} \|\nabla_h \partial_t w\|_{L_h^2(\Omega)}\\
	&\leq CR \|\nabla_h w\|_{H^1_h(\Omega)}^2 + CR\|\nabla_h \partial_t w\|_{L_h^2(\Omega)}^2.
	\end{align*}
	Using the definition of $H^1_h(\Omega)$ and Korn inequality, it follows
	\begin{align*}
	\|\nabla_h w\|_{H^1_h(\Omega)}^2 &\leq C\|\nabla_h w\|_{L^2_h(\Omega)}^2 + C\sum_{i=1}^{3} \|\partial_{x_i} \nabla_h w\|_{L^2_h(\Omega)}^2\\
	&\leq C\bigg\|\frac{1}{h}\e_h(w)\bigg\|_{L^2(\Omega)}^2 + \frac{C}{h^2}\bigg|\int_\Omega w\cdot x^\perp dx\bigg|^2 + C\bigg\|\bigg(\nabla \frac{1}{h}\e_h(w), \nabla_h^2 w\bigg)\bigg\|_{L^2(\Omega)}^2.
	\end{align*}
	Moreover,
	\begin{align*}
	\sup_{\tau\in [0, t]} \bigg|\frac{1}{h^2} \Big(\partial_t D^2\tilde{W}&(\nabla_h u_h(\tau))\nabla_h w(\tau)), \nabla_h \partial_t w(\tau)\Big)_{L^2(\Omega)}\bigg|\\
	&\leq CR \sup_{\tau\in [0, t]}  \Big(\|\nabla_h w(\tau)\|_{L^2_h(\Omega)} \|\nabla_h \partial_t w(\tau)\|_{L^2_h(\Omega)}\Big)\\
	& \leq CR \bigg(\bigg\|\frac{1}{h}\e_h(w)\bigg\|_{L^\infty(0,t; L^2(\Omega))}^2 + \bigg\|\frac{1}{h}\int_\Omega w\cdot x^\perp dx\bigg\|_{L^\infty(0,t)}^2\bigg)\\
	& \quad + CR \bigg(\bigg\|\frac{1}{h}\e_h(\partial_t w)\bigg\|_{L^\infty(0,t; L^2(\Omega))}^2 + \bigg\|\frac{1}{h}\int_\Omega \partial_t w\cdot x^\perp dx\bigg\|_{L^\infty(0,t)}^2\bigg).
	\end{align*}
	Putting everything together, using the coercivity \eqref{Coercivity} of $D^2\tilde{W}(\nabla_h u_h)$ and Young's inequality we obtain
	\begin{align*}
	\sup_{\tau\in [0, t]} & \bigg\|\Big(\partial_t^2 w(\tau), \frac{1}{h}\e_h(\partial_t w(\tau))\Big)\bigg\|_{L^2(\Omega)}^2\\
	&\leq \|w_2\|_{L^2(\Omega)}^2 + |A_1| + CR \sup_{\tau\in [0, t]} \bigg|\frac{1}{h} \int_\Omega \partial_t w\cdot x^\perp dx \bigg|^2 + C\|\partial_t f\|_{L^1(0,T;L^2(\Omega))}^2\\
	&\quad + \frac{1}{2}\|\partial_t^2 w\|_{L^\infty(0, t; L^2(\Omega))}^2 + CR \bigg(\bigg\|\frac{1}{h}\e_h(\partial_t w)\bigg\|_{L^2(0,t; L^2(\Omega))}^2 + \bigg\|\frac{1}{h}\int_\Omega \partial_t w\cdot x^\perp dx\bigg\|^2_{L^2(0,t)}\bigg)\\
	&\quad + CR\bigg(\bigg\|\frac{1}{h}\e_h(w)\bigg\|_{L^2(0,t; L^2(\Omega))}^2 + \bigg\|\frac{1}{h}\int_\Omega w\cdot x^\perp dx\bigg\|_{L^2(0,t)}^2\bigg)\\
	& \quad + CR \bigg(\bigg\|\frac{1}{h}\e_h(w)\bigg\|_{L^\infty(0,t; L^2(\Omega))}^2 + \bigg\|\frac{1}{h}\int_\Omega w\cdot x^\perp dx\bigg\|_{L^\infty(0,t)}^2\bigg)\\
	&\quad + CR \bigg(\bigg\|\frac{1}{h}\e_h(\partial_t w)\bigg\|_{L^\infty(0,t; L^2(\Omega))}^2 + \bigg\|\frac{1}{h}\int_\Omega \partial_t w\cdot x^\perp dx\bigg\|_{L^\infty(0,t)}^2\bigg)\\
	&\quad + CR\bigg\|\Big(\nabla \frac{1}{h}\e_h(w), \nabla_h^2 w\Big)\bigg\|_{L^2(0,t; L^2(\Omega))}^2
	\end{align*}
	uniformly in $0\leq t \leq T$. We use an absorption argument for
	\begin{equation*}
		\frac{1}{2}\|\partial_t^2 w\|_{L^\infty(0, t; L^2(\Omega))}^2 \;\;\text{ and }\;\; CR \Big\|\frac{1}{h}\e_h(\partial_t w)\Big\|_{L^\infty(0,t; L^2(\Omega))}^2
	\end{equation*}
	and the fact that $L^\infty(0,t) \hookrightarrow L^2(0,t)$ with $\|g\|_{L^2(0,t)} \leq \sqrt{t} \|g\|_{L^\infty(0,t)}$. Now, due to \eqref{InequalityHigherRegularity_improved}
	\begin{align*}
	\sup_{\tau\in [0, t]} \bigg\|\bigg(\nabla\frac{1}{h}\e_h (w(\tau)), \nabla_h^2 w(\tau)\bigg)\bigg\|^2_{L^2(\Omega)} &\leq C \|f\|^2_{C^0(0,t; L^2)} + C\|\partial_t^2 w\|^2_{C^0(0,t; L^2(\Omega))}\\
	&\qquad + CRt  \bigg\|\frac{1}{h}\int_\Omega w\cdot x^\perp dx\bigg\|^2_{L^\infty(0,t)}.
	\end{align*}
	Applying 
	\begin{equation*}
	\|g\|_{C^0([0,t]; L^2(\Omega))} \leq C\Big(\|g\|_{W^1_1(0,t;L^2(\Omega))} + \|g(0)\|_{L^2(\Omega)}\Big) \;\text{ for all } g\in W^1_1(0,t;L^2(\Omega))
	\end{equation*}
	for $f$, we arrive at
	\begin{align*}
	\sup_{\tau\in [0, t]} &\bigg\|\bigg(\partial_t^2 w(\tau), \nabla_{x,t}\frac{1}{h}\e_h(w(\tau)), \nabla_h^2 w(\tau)\bigg)\bigg\|^2_{L^2(\Omega)}\\
	&\leq \sup_{\tau\in [0, t]} \bigg\|\bigg(\partial_t^2 w(\tau), \frac{1}{h}\e_h(\partial_t w(\tau))\bigg)\bigg\|_{L^2(\Omega)}^2 + \sup_{\tau\in [0, t]} \bigg\|\bigg(\nabla\frac{1}{h}\e_h (w(\tau)), \nabla_h^2 w(\tau)\bigg)\bigg\|_{L^2(\Omega)} \\
	& \leq \|w_2\|^2_{L^2(\Omega)} + |A_1| + C\|\partial_t f\|^2_{L^1(0, T; L^2)} + C\|f\|^2_{W^1_1(0,t; L^2(\Omega))} + C\|f|^2_{t=0}\|_{L^2(\Omega)}\\
	&\quad + CR(1+t) \bigg\|\frac{1}{h}\e_h(w)\bigg\|_{L^\infty(0,t; L^2(\Omega))}^2 + CR (1+t) \max_{\sigma = 0,1} \bigg\|\frac{1}{h} \int_\Omega \partial_t^\sigma w\cdot x^\perp dx\bigg\|^2_{L^\infty(0,t)}\\
	&\quad+ CR \bigg\|\bigg(\frac{1}{h}\e_h(\partial_t w), \nabla \frac{1}{h}\e_h(w), \nabla_h^2 w\bigg)\bigg\|^2_{L^2(0,t; L^2(\Omega))}
	\end{align*}
	where we used $L^\infty(0,t)\hookrightarrow L^2(0,t)$ again. Hence, due \eqref{BasicInequality} and choosing $R_0$ sufficiently small
	\begin{align*}
	\sup_{\tau\in [0, t]}& \bigg\|\bigg(\partial_t^2 w(\tau), \nabla_{x,t}\frac{1}{h}\e_h(w(\tau)), \nabla_h^2 w(\tau)\bigg)\bigg\|^2_{L^2(\Omega)}\\
	& \leq \|w_2\|^2_{L^2(\Omega)} + |A_1| + C\|f|_{t=0}\|^2_{L^2(\Omega)} + CR\lambda_t \Big(\|w_1\|^2_{L^2} + |A_0| + \|f\|^2_{L^1(0,t; L^2)}\Big)\\
	&\quad + C\|\partial_t f\|^2_{L^1(0, T; L^2)} + C\|f\|^2_{W^1_1(0,t; L^2)} + CR (1+t) \max_{\sigma = 0,1} \bigg\|\frac{1}{h} \int_\Omega \partial_t^\sigma w\cdot x^\perp dx\bigg\|^2_{L^\infty(0,t)}\\
	&\quad + CR \bigg\|\bigg(\partial_t^2 w, \nabla_{t,x}\frac{1}{h}\e_h(w), \nabla_h^2 w\bigg)\bigg\|^2_{L^2(0,t; L^2)},
	\end{align*}
	where $\lambda_t = \max\{1, t\}$. As $0 < t < T < \infty$, there exists $R_0\in (0,1]$, such that
	\begin{equation*}
	C\|\partial_t f\|^2_{L^1(0, T; L^2)} + C\|f\|^2_{W^1_1(0,t; L^2)} + CR\lambda_t \|f\|^2_{L^1(0,t; L^2)} \leq C \|f\|^2_{W^1_1(0,T; L^2)}
	\end{equation*}
	because of $W^1_1(0,t)\hookrightarrow L^1(0,t)$ with $\|g\|_{L^1(0,t)} \leq \|g\|_{W^1_1(0,t)}$. The Lemma of Gronwall yields then 
	\begin{align*}
	\bigg\|\bigg(\partial_t^2 w, &\nabla_{x,t}\frac{1}{h}\e_h(w), \nabla_h^2 w\bigg)\bigg\|^2_{C([0,T];L^2(\Omega))} \leq C_{L1} e^{C_1 (1+T)R} \bigg(\|f\|^2_{W^1_1(0,T; L^2)} + |(A_0, A_1)| \\
	&\qquad + \|(w_0, w_1, f|_{t=0})\|^2_{L^2} + (1+T)R \max_{\sigma = 0,1} \bigg\|\frac{1}{h} \int_\Omega \partial_t^\sigma w\cdot x^\perp dx\bigg\|^2_{C^0([0,T])}\bigg). \qedhere
	\end{align*}
\end{proof}
\begin{bem}
	The existence of a unique solution $w\in C^0([0,T]; H^1_{per}(\Omega))\cap C^1([0,T]; L^2(\Omega))$ for \eqref{linearEQ_1}--\eqref{linearEQ_4} under the conditions of Theorem \ref{TheoremSecondOrderInequality} follows from classical PDE theory. For higher regularity one would then apply hyperbolic regularity theory, cf. \cite{WlokaPDE} and \cite[Chapter 5]{LionsMagenes}. Necessarily we need at this point that suitable compatibility conditions hold. As in the proof of the main result solutions of the linearised system are obtained via differentiation, we will not show the details here. 
\end{bem}
In order to prove the main theorem via contradiction, we need that $\nabla_{x,t}^2 u_h$ is regular enough, to avoid blow ups. In our situation we need the following regularity theorem.
\begin{theorem}
	Let $0 < T < \infty$, $h\in (0,1]$, $0 < R \leq R_0$ be given, where $R_0$ is chosen small enough, but independent of $h$ and let $u_h$ satisfy \eqref{AssumptionsOnU1}. Assume $w\in  \bigcap_{k=0}^3 C^k([0,T]; H^{3-k}_{per}(\Omega))$ to be the unique solution of the linearised system \eqref{linearEQ_1}--\eqref{linearEQ_4} for some $f\in W^2_1(0,T; L^2(\Omega))\cap W^1_1(0,T;H^1_{per}(\Omega))$, $w_0\in H^3_{per}(\Omega)$ and $w_1\in H^2_{per}(\Omega)$. Then there exist constants $C_{max}\geq 1$, $C' >0$ depending on $T$ such that
	\begin{align*}
	&\max_{\sigma = 0,1,2}\bigg\|\bigg(\partial_t^{1+\sigma} w, \nabla_{t,x}^\sigma\frac{1}{h}\e_h(w), \nabla_{x,t}^{\max\{\sigma-1, 0\}}\nabla_h^2 w\bigg)\bigg\|^2_{C^0([0,T];L^2(\Omega))}\\
	&\;\leq C_{max} e^{C'(1+T) R} \bigg(\|\partial_t^2 f\|^2_{L^1(0,T;L^2)} + \|\partial_t f\|^2_{L^\infty(0,T;L^2)\cap L^1(0,T;H^{0,1})} + \|f\|^2_{L^\infty(0,T;H^1)} \\
	& \quad + \|(w_1, w_2, w_3, f|_{t=0})\|^2_{L^2} + \bigg\|\bigg(\frac{1}{h}\e_h(\partial_{x_1}w_1), \partial_{x_1} w_2\bigg)\bigg\|^2_{L^2}\\
	& \quad +\max_{k=0,1,2}\bigg(\bigg\|\frac{1}{h}\e_h(w_k)\bigg\|^2_{L^2} + \bigg|\frac{1}{h}\int_\Omega w_k \cdot x^\perp dx\bigg|^2\bigg)\\
	&\quad + \frac{1}{h^2}\bigg\|\int_\Omega \partial_t^2 w\cdot x^\perp  dx\bigg\|^2_{L^\infty(0,T)} + (1+T)R \max_{\sigma = 0,1,2} \bigg\|\frac{1}{h} \int_\Omega\partial_t^\sigma w\cdot x^\perp dx\bigg\|^2_{C^0([0,T])}\bigg)
	\end{align*}
	\label{UniversalInequality}
\end{theorem}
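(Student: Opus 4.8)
The plan is to establish the bound scale by scale. For $\sigma=0$ and $\sigma=1$ the claimed inequality follows from Lemma~\ref{Lemma_BasicInequality} and Theorem~\ref{TheoremSecondOrderInequality} respectively, once one observes that the energy quantities $A_k$ appearing there are dominated by $\|\tfrac1h\e_h(w_k)\|_{L^2}^2+\tfrac1{h^2}|\int_\Omega w_k\cdot x^\perp dx|^2$ (expand $D^2\tilde W(\nabla_h u_h|_{t=0})=D^2\tilde W(0)+O(\nabla_h u_h)$, use \eqref{globalInequalityCoercivityOfDW}, Lemma~\ref{AbschätzungenAbleitungD2W}, and Korn's inequality \eqref{KornMeanValue}), and that all the right-hand sides of the two previous results are dominated by the one asserted here. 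So the genuinely new work is the $\sigma=2$ estimate, which is obtained by combining a third-order energy estimate with the elliptic regularity of Theorem~\ref{TheoremHigherRegularity}/Corollary~\ref{CorollaryHigherRegularity_improved} with $k=1$.

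\emph{Energy step.} Differentiate \eqref{linearEQ_1} twice in $t$ and test with $\partial_t^3 w$. In the Leibniz expansion of $\partial_t^2\divh(D^2\tilde W(\nabla_h u_h)\nabla_h w)$ the term without a time derivative on $D^2\tilde W$ produces, exactly as in Theorem~\ref{TheoremSecondOrderInequality}, the time derivative of the energy $\tfrac12\|\partial_t^3 w\|_{L^2}^2+\tfrac1{2h^2}(D^2\tilde W(\nabla_h u_h)\nabla_h\partial_t^2 w,\nabla_h\partial_t^2 w)_{L^2}$; the remaining terms, which contain the too-high-order factor $\nabla_h\partial_t^3 w$, are rewritten as total time derivatives minus remainders of the form $\partial_t^\ell D^2\tilde W(\nabla_h u_h)$ ($\ell\le 3$) paired with $\nabla_h\partial_t^m w$ ($m\le 2$), all estimated by $CR$ times products of $\|\nabla_h\partial_t^m w\|_{H^{2-m}_h}$-norms via Lemma~\ref{AbschätzungenAbleitungD2W}. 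Coercivity \eqref{Coercivity} together with Korn's inequality \eqref{KornMeanValue} converts the energy into control of $\|\tfrac1h\e_h(\partial_t^2 w)\|_{L^2}^2$ at the cost of the term $\tfrac1{h^2}|\int_\Omega\partial_t^2 w\cdot x^\perp dx|^2$; since this term is produced by Korn and not by an $R$-small remainder, it must be carried along as a genuine data term, which is why it appears without the factor $R$ in the statement. After integrating in time and bounding the boundary contributions by the initial quantities $w_1,w_2,w_3$ (and $A$-type terms handled as above) one gets control of $\|(\partial_t^3 w,\tfrac1h\e_h(\partial_t^2 w))\|_{C^0([0,T];L^2)}^2$ by the right-hand side of the theorem plus $CR$ times $L^2(0,T;L^2)$-norms of $\tfrac1h\e_h(\partial_t^\sigma w)$, $\nabla_{x,t}\tfrac1h\e_h(w)$ and $\nabla_{x,t}\nabla_h^2 w$. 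The mixed quantities $\partial_t\partial_{x_1}\tfrac1h\e_h(w)$ and $\partial_{x_1}^2\tfrac1h\e_h(w)$ are obtained the same way by first differentiating \eqref{linearEQ_1} once, resp.\ twice, in $x_1$ (so that $\partial_{x_1}w$, $\partial_{x_1}^2 w$ solve the linearised system with modified right-hand side and an inhomogeneous Neumann datum built from $D^2\tilde W$-derivatives of $u_h$, hence of size $CR$), running the corresponding first/second-order energy estimate; this introduces the data term $\|(\tfrac1h\e_h(\partial_{x_1}w_1),\partial_{x_1}w_2)\|_{L^2}^2$.

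\emph{Elliptic step and closing.} From \eqref{linearEQ_1}, $w$ solves \eqref{StationaryZeroNeumannBoundaryEquation} with right-hand side $f-\partial_t^2 w$ and homogeneous Neumann datum, so Corollary~\ref{CorollaryHigherRegularity_improved} with $k=1$ controls $\|(\nabla\tfrac1h\e_h(w),\nabla_h^2 w)\|_{H^1}$ by $\|f-\partial_t^2 w\|_{H^1}+R|\tfrac1h\int_\Omega w\cdot x^\perp dx|$, with $\|f-\partial_t^2 w\|_{H^1}\le\|f\|_{L^\infty(H^1)}+C\|\nabla_h\partial_t^2 w\|_{L^2}$, the last summand already controlled (through Korn) by the energy step; differentiating the stationary identity and its boundary condition once in $t$, resp.\ $x_1$, turns $\partial_t w$, $\partial_{x_1}w$ into solutions of the inhomogeneous-Neumann problem \eqref{LocalStationaryNonZeroNeumannBoundaryEquationHigherRegularity} with data assembled from $\partial_z f$, $\partial_t^3 w$, $\partial_t^2\partial_{x_1}w$ and $CR$-small terms, so Theorem~\ref{TheoremHigherRegularity} with $k=0$ yields $\nabla_h^2\partial_t w$ and $\nabla_h^2\partial_{x_1}w$, hence all of $\nabla_{x,t}\nabla_h^2 w$ and the remaining pieces of $\nabla_{t,x}^2\tfrac1h\e_h(w)$. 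Inserting these elliptic bounds into the energy estimate, absorbing the $CR$-terms for $R_0$ small, bounding the lower-order $L^2(0,t;L^2)$-norms by Lemma~\ref{Lemma_BasicInequality} and Theorem~\ref{TheoremSecondOrderInequality}, using $L^\infty(0,t)\hookrightarrow L^2(0,t)$ with factor $\sqrt t$ to turn the remaining $C^0$-in-time norms into time-integrals, and applying Gronwall's lemma, produces the factor $e^{C'(1+T)R}$ and the asserted inequality.

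The main obstacle is the elliptic/hyperbolic \emph{coupling}: the stationary estimates for $\nabla_h^2 w$ draw on $\partial_t^2 w,\partial_t^3 w$ and their first $x_1$-derivatives, while the energy estimate draws on $\nabla_{x,t}\nabla_h^2 w$; one must order the estimates and track prefactors so that every dangerous cross term carries a factor $R$ (absorbable for small $R_0$) or is already controlled at the previous scale, leaving on the right only the $f$-data, the initial data, and the angular-momentum terms $\tfrac1h\int_\Omega\partial_t^\sigma w\cdot x^\perp dx$. The delicate bookkeeping lies in the inhomogeneous Neumann data generated by differentiating the stationary equation in $x_1$ and $t$, and in checking — via Lemma~\ref{AbschätzungenAbleitungD2W} — that they are $O(R)$ in the norms required by Theorem~\ref{TheoremHigherRegularity}.
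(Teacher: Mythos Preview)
Your proposal is correct and follows essentially the same strategy as the paper's proof. Both arguments reduce to the lower-order results for $\sigma=0,1$, handle the $A_k$ terms by the Taylor-plus-Korn argument you describe, and obtain the $\sigma=2$ bound by combining a third-order energy identity with the stationary elliptic estimates, closing via absorption of $O(R)$ terms and Gronwall.

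The only organizational difference is packaging: the paper introduces $\tilde w_j=\partial_{z_j}w$ for $j=0,1$ as the solution of a linearised system with inhomogeneous Neumann datum, applies Theorem~\ref{TheoremHigherRegularity} with $k=0$ to $\tilde w_j$ to get \eqref{LocalStationaryThirdOrderInequality}, and then invokes verbatim the machinery of Theorem~\ref{TheoremSecondOrderInequality} (differentiating the $\tilde w_j$-equation once in $t$ and testing with $\partial_t^2\tilde w_j$). Your version unpacks this: your ``differentiate twice in $t$ and test with $\partial_t^3 w$'' is exactly the paper's step for $j=0$, and your $x_1$-differentiated runs correspond to $j=1$. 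You additionally invoke Corollary~\ref{CorollaryHigherRegularity_improved} with $k=1$ directly on $w$, which cleanly picks up the pure $x'$-derivative pieces of $\nabla_{x,t}\nabla_h^2 w$ and $\nabla_{t,x}^2\tfrac1h\e_h(w)$ that the paper's sketch leaves implicit; this is a welcome clarification rather than a different method. Your identification of why the $\tfrac1{h^2}|\int_\Omega\partial_t^2 w\cdot x^\perp|^2$ term appears without an $R$ factor (it comes from Korn applied to $\nabla_h\partial_t^2 w$, not from a perturbative remainder) is also on target.
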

\begin{proof}
	Due to Lemma \ref{Lemma_BasicInequality} and Theorem \ref{TheoremSecondOrderInequality} it remains to bound only the third order terms. 
	
	Differentiation with respect to $z_j$ of the system \eqref{linearEQ_1}--\eqref{linearEQ_2} leads to
	\begin{align*}
		\partial_t^2 \tilde{w}_j - \frac{1}{h^2} \divh\Big(D^2\tilde{W}(\nabla_h u_h)\nabla_h \tw_j\Big) & = \partial_{z_j} f + \frac{1}{h^2}\divh\Big(\partial_{z_j}D^2\tilde{W}(\nabla_h u_h)\nabla_h w\Big),\\
		D^2\tilde{W}(\nabla_h u_h) \nabla_h \tw_j \nu\Big|_{(0,L)\times \partial S} & = -\partial_{z_j} D^2\tilde{W}(\nabla_h u_h) \nabla_h w \nu\Big|_{(0,L)\times\partial S},\\
		\tw \text{ is $L$-periodic}& \text{ with respect to } x_1,
	\end{align*}
	where $\tw_j = \partial_{z_j} w$. First we want to apply Theorem \ref{TheoremHigherRegularity} with $\varphi := \tw_j$ and
	\begin{align*}
		& g:=  \tilde{f}_j + \frac{1}{h^2}\divh\Big(\partial_{z_j}D^2\tilde{W}(\nabla_h u_h)\nabla_h w\Big) - \partial_t^2 \tilde{w}_j,\\
		& g_N := -\partial_{z_j} D^2\tilde{W}(\nabla_h u_h)\nabla_h w \nu\Big|_{(0,L)\times\partial S},
	\end{align*}
	where we used the convention $\tilde{f}_j := \partial_{z_j} f$. Then we obtain
	\begin{align*}
		\bigg\|\bigg(\nabla\frac{1}{h}\e_h(\tw_j), \nabla_h^2 \tw_j \bigg)\bigg\|_{L^2(\Omega)} & \leq Ch^2\bigg\|\tilde{f}_j -\partial_t^2 \tw_j + \frac{1}{h^2}\divh\Big(\partial_{z_j}D^2\tilde{W}(\nabla_h u_h)\nabla_h w\Big)\bigg\|_{L^2(\Omega)} \\
		& \qquad + C \bigg\|\frac{1}{h}\tr{\partial S}\Big(\partial_{z_j} D^2\tilde{W}(\nabla_h u_h) \partial_h w\Big)\bigg\|_{L^2(0,L;H^\frac{1}{2}(\partial S))}\\
		& \qquad + C \bigg\|\frac{1}{h}\e_h(\tw_j)\bigg\|_{H^{0,1}(\Omega)} + CR \bigg|\frac{1}{h}\int_\Omega \tw_j \cdot x^\perp dx\bigg|.
	\end{align*}
	Moreover, as $D^3\tilde{W}(\nabla_h u_h)$ is uniformly bounded and $u_h$ satisfies \eqref{AssumptionsOnU1}
	\begin{align*}
		\Big\|\divh\Big(\partial_{z_j} D^2\tilde{W}(\nabla_h u_h)\nabla_h w\Big)\Big\|_{L^2(\Omega)} &\leq \Big\|\divh\Big(D^3\tilde{W}(\nabla_h u_h)[\nabla_h \partial_{z_j} u_h]\nabla_h w\Big)\Big\|_{L^2(\Omega)}\\
		&\leq CR \Big\|\Big(\nabla_h w,\nabla_h^2 w\Big)\Big\|_{L^2(\Omega)}
	\end{align*}
	and
	\begin{align*}
		\bigg\|\frac{1}{h}\tr{\partial S}\Big(\partial_{z_j} D^2\tilde{W}(\nabla_h u_h) \nabla_h w\Big)\bigg\|_{L^2(0,L;H^\frac{1}{2}(\partial S))} & \leq C\bigg\|\frac{1}{h}D^3\tilde{W}(\nabla_h u_h)[\nabla_h\partial_{z_j} u_h]  \nabla_h w\Big)\bigg\|_{L^2(0,L;H^1(S))}\\
		& \leq CR \|\nabla_h w\|_{L^2(\Omega)} + CRh \|\nabla_h^2 w\|_{L^2(\Omega)}.
	\end{align*}
	Hence
	\begin{align}\nonumber
			\bigg\|\bigg(\nabla\frac{1}{h}\e_h (\tw_j), \nabla_h^2 \tw_j \bigg)\bigg\|_{L^2(\Omega)} &\leq Ch^2\Big(\|\tilde{f}_j\|_{L^2(\Omega)} + \|\partial_t^2 \tw_j\|_{L^2(\Omega)}\Big) + C \bigg\|\frac{1}{h}\e_h(\tw_j)\bigg\|_{H^{0,1}(\Omega)}\\
			& \quad  + CR\frac{1}{h}\bigg|\int_\Omega \tw_j \cdot x^\perp dx\bigg| + CR \Big\|\Big(\nabla_h w,\nabla_h^2 w\Big)\Big\|_{L^2(\Omega)}
		\label{LocalStationaryThirdOrderInequality}
	\end{align}	
	for almost every $t\in (0,T)$. With this we can follow a similar argument as in the proof of Theorem \ref{TheoremSecondOrderInequality}. For this we differentiate the equation for $\tw_j$ in time and test with $\partial_t^2 \tw_j$. Then it follows
	\begin{align*}
		(\partial_t^3 \tw_j,& \partial_t^2\tw_j)_{L^2(\Omega)} + \frac{1}{h^2} \Big(D^2\tilde{W}(\nabla_h u_h)\nabla_h \partial_t\tw_j, \nabla_h\partial_t^2\tw_j\Big)_{L^2(\Omega)}\\
		& = (\partial_t \tilde{f}_j, \partial_t^2 \tw_j)_{L^2(\Omega)} - \frac{1}{h^2}\Big(\partial_t\big(\partial_{z_j}D^2\tilde{W}(\nabla_h u_h)\nabla_h w\big), \nabla_h\partial_t^2 \tw_j\Big)_{L^2(\Omega)}\\
		& \qquad- \frac{1}{h^2}\Big(\partial_t D^2\tilde{W}(\nabla_h u_h)\nabla_h \tw_j, \nabla_h\partial^2_t \tw_j \Big)_{L^2(\Omega)}.
	\end{align*}
	because all boundary integrals disappear due to periodicity of $\tw_j$ and $u_h$, respectively, and the Neumann boundary conditions. With this we can follow a similar strategy as in Theorem \ref{TheoremSecondOrderInequality} to obtain an analogous result. Finally using 
	\begin{equation*}
		|A_k| = \bigg| \frac{1}{h} \Big(D^2\tilde{W}(\nabla_h u_{0,h}) \nabla_h w_k,\nabla_h w_k \Big)_{L^2}\bigg| \leq C \bigg\|\frac{1}{h} \e_h(w_k)\bigg\|^2_{L^2} + C\bigg|\frac{1}{h} \int_\Omega w_k \cdot x^\perp dx\bigg|^2
	\end{equation*}
	for $k=0, 1, 2$ we obtain the claimed inequality.	
\end{proof}

\subsection{Proof of Theorem \ref{MainTheorem}}
\label{subsec::ProofOfMainTheorem}
Before we start the proof of the main theorem, we will prepare some bounds on the rotation of the solution around the $x_1$-axis. More precisly we need to bound the following quantity
\begin{equation*}
	\operatorname{max}_{\sigma = 1,2,3} \bigg\|\frac{1}{h} \int_\Omega \partial_t^\sigma u_h\cdot x^\perp dx\bigg\|_{C^0([0,T(h)])}.
\end{equation*}
We can transform the system \eqref{NLS1}--\eqref{NLS4} via $\phi_h \colon \Omega_h \to \Omega$, $x\mapsto (x_1, \frac{1}{h}x_2, \frac{1}{h}x_3)$. Hence $y_h := u_h \circ \phi_h$ solve the equation
\begin{align*}
	\partial_t^2 y_h - \frac{1}{h^2}\operatorname{div} D\tilde{W}(\nabla y_h) &= \hat{f}_h := h^{1+\theta} f_h\circ \phi_h \quad \text{in } \Omega_h \times [0, T_*)\\
	D\tilde{W}(\nabla y_h)\nu|_{(0, L)\times h \partial S} &= 0\\
	y_h \text{ is $L$-periodic} &\text{ w.r.t. } x_1 \\
	(y_h, \partial_t y_h)|_{t=0} &= (y_{0,h}, y_{1,h})
\end{align*}
with $(y_{0,h}, y_{1,h}):=(u_{0,h}\circ\phi_h, u_{1,h}\circ\phi_h)$. Due to the frame invariance the Piola-Kirchhoff stress $DW(F)$ fulfils the appropriate symmetry condition and one can apply the balance law of angular momentum 
\begin{align*}
	\int_{\partial\Omega_h} (x + u_h\circ\phi_h) \times \frac{1}{h^2} D\tilde{W}(\nabla(u_h\circ \phi_h))\nu d\sigma(x)& + \int_{\Omega_h} (x+ u_h\circ\phi_h)\times\hat{f} dx\\
	& = \int_{\Omega_h} (x + u_h\circ\phi_h) \times \partial_t^2 u_h\circ\phi_h dx
\end{align*}
With the transformation formula applied for $\phi^{-1}$ we conclude with $g^h := h^{1+\theta} f^h$
\begin{align*}
	h \int_{\partial\Omega} (\phi^{-1}_h(x) + u_h) \times \frac{1}{h^2} D\tilde{W}(\nabla_h u_h)\nu d\sigma(x) &+ h^2 \int_\Omega (\phi^{-1}_h(x) + u_h) \times g^h dx\\
	& = h^2\int_{\Omega} (\phi^{-1}_h(x) + u_h) \times \partial_t^2 u_h dx
\end{align*}
We can restrict to just the first component, as only rotations around $x_1$-axis have to be controlled for the use of Korn's inequality. For the first component we have
\begin{align*}
	\bigg(\int_{\partial\Omega}& (\phi^{-1}_h(x) + u_h)\times \frac{1}{h^2} D\tilde{W}(\nabla_h u_h)\nu d\sigma(x)\bigg)_1\\
	& = \int_{\partial\Omega} (hx^\perp + u_h^\perp)\cdot \frac{1}{h^2} D\tilde{W}(\nabla_h u_h)\nu d\sigma(x) = \int_{S} \Big[(hx^\perp + u_h) \cdot \frac{1}{h^2} D\tilde{W}(\nabla_h u_h)\nu\Big]_0^L dx'
\end{align*}
because $D\tilde{W}(\nabla_h u_h)\nu = 0$ on $(0,L)\times\partial S$ and as $x^\perp$ does not depend on $x_1$ it follows that $hx^\perp + u_h$ is $L$-periodic in the $x_1$-direction. Using this and $\nu(0,x') = - \nu(L,x')$ for all $x'\in S$ we deduce
\begin{equation*}
	\bigg(\int_{\partial\Omega} (\phi^{-1}_h(x) + u_h)\times \frac{1}{h^2} D\tilde{W}(\nabla_h u_h)\nu d\sigma(x)\bigg)_1 = 0.
\end{equation*}
Thus we have
\begin{equation}
\partial_t^2\int_\Omega x^\perp\cdot u_h dx = \int_\Omega x^\perp\cdot g^h dx + \frac{1}{h}\int_\Omega u_h^\perp\cdot g^h dx - \frac{1}{h}\int_\Omega u_h^\perp\cdot\partial_t^2 u_hdx
\label{MomentumBalanceLaw}
\end{equation}
for almost all $t\in [0,T_{max}(h))$. With this we can later bound $\|\frac{1}{h} \int_\Omega \partial_t^\sigma u_h\cdot x^\perp dx\|_{C^0([0, T(h)])}$ for $\sigma = 1,2,3$ uniformly in $h\in (0,1]$. We note that with \eqref{MomentumBalanceLaw} it follows
\begin{align}
	&\bigg\|\frac{1}{h} (\partial_t u_h, x^\perp)_{L^2(\Omega)}\bigg\|_{C^0([0,T(h)])} \leq \bigg|\frac{1}{h} (u_{1,h}, x^\perp)_{L^2(\Omega)}\bigg| + \frac{1}{h} \int_0^{T(h)} |(x^\perp, g^h)_{L^2}| d\tau \label{InequalityFirstOrderRoation}\\
	& \qquad + \frac{1}{h^2} \int_0^{T(h)} |(u_h^\perp, g^h)_{L^2}| d\tau + \frac{1}{h^2} \int_0^{T(h)} |(u_h^\perp, \partial_t^2 u_h)_{L^2}|d\tau\notag\\
	& \bigg\|\frac{1}{h} (\partial_t^2 u_h, x^\perp)_{L^2(\Omega)}\bigg\|_{C^0([0,T(h)])} \leq \bigg\|\frac{1}{h} (g^h, x^\perp)_{L^2(\Omega)}\bigg\|_{C^0([0,T(h)])}\label{InequalitySecondOrderRoation}\\
	&\qquad + \bigg\|\frac{1}{h^2} (u_h^\perp, g^h)_{L^2(\Omega)}\bigg\|_{C^0([0,T(h)])} +\bigg\|\frac{1}{h^2} (u_h^\perp, \partial_t^2 u_h)_{L^2(\Omega)}\bigg\|_{C^0([0,T(h)])} \notag\\
	&\bigg\|\frac{1}{h} (\partial_t^3 u_h, x^\perp)_{L^2(\Omega)}\bigg\|_{C^0([0,T(h)])} \leq \bigg\|\frac{1}{h} (x^\perp, \partial_t g^h)_{L^2(\Omega)}\bigg\|_{C^0([0,T(h)])}\label{InequalityThirdOrderRoation}\\
	& \qquad + \bigg\|\frac{1}{h^2} (\partial_t u_h^\perp, g^h)_{L^2(\Omega)}\bigg\|_{C^0([0,T(h)])} + \bigg\|\frac{1}{h^2} (u_h^\perp, \partial_t g^h)\bigg\|_{C^0([0,T(h)])} \notag\\
	&\qquad + \bigg\|\frac{1}{h^2} (\partial_t u_h^\perp, \partial_t^2 u_h)_{L^2(\Omega)} \bigg\|_{C^0([0,T(h)])} + \bigg\|\frac{1}{h^2} (u_h^\perp, \partial_t^3 u_h)_{L^2(\Omega)} \bigg\|_{C^0([0,T(h)])} \notag
\end{align}
\begin{proof}[Proof of Theorem \ref{MainTheorem}]
	Without loss of generality we will assume that $0<T\leq 1$. This is possible as we can perform a rescaling in $h$ and $t$ by $T^{-1}$, changing only $M$ by a $T$-depending factor. Furthermore we assume that $R_0$ is sufficiently small, such that all results of Section \ref{subsec::UniformEstimatesForLinearisedSystem} are applicable.
	
	The assumptions \eqref{AssumptionOnInitialDataU1}--\eqref{AssumptionOnInitialDataU3} and \eqref{AssumptionsOnNonlinearf} imply
	\begin{align}
		&\max_{|\alpha| = 1} h^{1+\theta}\Big(\|\partial_t^2 \partial_z^\alpha f\|_{L^1(0,T;L^2)} + \|\partial_t \partial_z^\alpha f\|_{L^\infty(0,T;L^2)\cap L^1(0,T;H^{0,1})} + \|\partial_z^\alpha f\|_{L^\infty(0,T;H^1)}\Big)\notag\\
		&\;+ \max_{k=0,1,2} \bigg(\Big\|\Big(\frac{1}{h} \e_h(u_{1+k,h}), \partial_{x_1} \frac{1}{h} \e_h(u_{k,h})\Big)\Big\|_{L^2} + \bigg|\frac{1}{h} \int_\Omega u_{1+k, h} \cdot x^\perp dx\bigg|\bigg)\notag\\
		&\;+ \max_{|\alpha| = 1} h^{1+\theta} \|\partial_z^\alpha f|_{t=0}\|_{L^2} + \Big\|\Big(\frac{1}{h}\e_h(\partial_{x_1} u_{2,h}), \frac{1}{h}\e_h(\partial^2_{x_1} u_{1,h}), \partial_{x_1} u_{3,h}, \partial^2_{x_1} u_{2,h}\Big)\Big\|^2_{L^2}\notag\\
		&\;+ \Big\|u_{4,h} - \frac{1}{h^2} \divh\Big(D^2\tilde{W}(\nabla_h u_{0,h}) [\nabla_h u_{1,h}, \nabla_h u_{1,h}]\Big)\Big\|_{L^2}\notag\\
		&\;+\Big\|\partial_{x_1} u_{3,h} - \frac{1}{h^2} \divh\Big(D^2\tilde{W}(\nabla_h u_{0,h}) [\nabla_h u_{1,h}, \nabla_h \partial_{x_1} u_{0,h}]\Big)\Big\|_{L^2}\notag\\
		&\;+ \max_{k=1,2} \|(u_{1+k,h}, \partial_{x_1} u_{k,h})\| \leq \tilde{M} h^{1+\theta} \label{localInequalityForInitialValues}
	\end{align}
	for $\tilde{M} = CM$ with some universal constant $C\geq 1$. We choose $h_0 >0$ small enough such that $R:=106 C_{max}\tilde{M} h_0^\theta \leq R_0$ holds, where $C_{max} \geq 1$ is chosen as in Corollary \ref{UniversalInequality}. Let $u_h\in \bigcap_{k=0}^4 C^k([0,T_{max}(h)); H^{4-k}_{per})$ be the solution of \eqref{NLS1}--\eqref{NLS4} from Theorem \ref{TheoremNonlinearShortTimeExistence}. Then there exists some maximal $0 < T'=T'(h) \leq\min\{T_{max}(h), T\}$ such that
	\begin{align}
	\max_{\substack{|\alpha| \leq 1, |\beta|\leq 2, |\gamma|\leq 1\\\sigma=0,1,2}} \bigg(\Big\|\Big(\partial_t^{2+\sigma}& u_h, \nabla_{x,t}^\beta\frac{1}{h} \e_h(\partial_z^\alpha u_h), \nabla_{x,t}^\gamma \nabla_h^2 \partial_z^\alpha u_h\Big)\Big\|_{C^0([0,T'(h)];L^2(\Omega))}  \label{localInequalityForU}\\
	&\qquad + \bigg\|\frac{1}{h}\int_\Omega \partial_z^{\alpha + \beta} u_h\cdot x^\perp dx\bigg\|_{C^0([0,T'(h)])} \bigg) \leq 106 C_{max} \tilde{M} h^{1+\theta}. \notag
	\end{align}
	This maximum exists since as, if \eqref{localInequalityForU} holds, the set $\{\nabla_h u_h(x,t) \;:\; x\in\overline{\Omega}, t\in[0,T'(h)]\}$ is precompact in $U_h$, cf. Appendix \ref{Appendix::ExistenceOfClassicalSolutionsForFixedH}. Moreover it holds
	\begin{equation}
	\int_0^{T'(h)} \|\nabla_{x,t}^2 u_h\|_{L^\infty(\Omega)} dt < \infty.
	\label{localNoBlowUpCondition}
	\end{equation}
	This can be seen by using \eqref{localInequalityForU}, as it follows 
	\begin{equation*}
		\nabla_h^2 u_h(t,\cdot)\in H^{1,1}(\Omega)\hookrightarrow H^1(0,L; H^1(S))\hookrightarrow\operatorname{BUC}([0,L]; L^\infty(S))
	\end{equation*}
	and 
	\begin{equation*}
		\partial_t^2 u(t,\cdot)\in H^2(\Omega)\hookrightarrow C^0(\overline{\Omega})
	\end{equation*}
	for all $t\in [0, T'(h))$. Moreover as long as \eqref{localInequalityForU} is valid, $u_h$ satisfies \eqref{AssumptionsOnU1} and all the results of Section \ref{subsec::UniformEstimatesForLinearisedSystem}, especially Corollary \ref{UniversalInequality}, are applicable. We want to reduce to the case that $u_h$ is mean value free. Hence we assume in a first step that 
	\begin{equation*}
		\int_\Omega u_h dx = \int_\Omega u_{0,h} dx =\int_\Omega u_{1,h} dx =\int_\Omega f^h dx = 0
	\end{equation*} 
	for all $t\in [0,T'(h)]$. Using \eqref{NLS1}--\eqref{NLS4}, we obtain that $w_h^j := \partial_{z_j} u_h$, $j = 0, 1$, solves 
	\begin{gather*}
	\partial_t^2 w_h^j - \frac{1}{h^2} \divh(D^2\tilde{W}(\nabla_h u_h)\nabla_h w_h^j) =h^{1+\theta} \partial_{z_j} f_h \quad\text{in } \Omega\times (0,T'(h))\\
	D^2\tilde{W}(\nabla_h u_h)[\nabla_h w_h^j] \nu|_{(0,L)\times\partial S} = 0 \\
	w_h^j \text{ is $L$-periodic in $x_1$}\\
	(w_h^j, \partial_t w_h^j)\Big|_{t=0} = (w_{0,h}^j, w_{1,h}^j)
	\end{gather*}
	with $w_{k,h}^0 = u_{1+k,h}$ and $w_{k,h}^1 = \partial_{x_1} u_{k,h}$. Hence with Theorem \ref{UniversalInequality} and \eqref{localInequalityForInitialValues} it follows
	\begin{align*}
	&\max_{\substack{|\alpha| = 1, |\beta|\leq 2, |\gamma|\leq 1\\\sigma=0,1,2}} \bigg(\bigg\|\bigg(\partial_t^{2+\sigma} u_h, \nabla_{x,t}^\beta\frac{1}{h}\e_h(\partial_z^\alpha u_h), \nabla_{x,t}^\gamma\nabla_h^2 \partial_z^\alpha u_h\bigg)\bigg\|_{C^0([0,T'(h)], L^2)} \\
	& \hspace{2cm}+ \bigg\|\frac{1}{h} \int_\Omega \partial_z^{\alpha + \beta} u_h \cdot x^\perp dx\bigg\|_{C^0([0,T'(h)])}\bigg)\\
	& \leq C_{max} e^{C \tilde{M}h_0^\theta}\bigg(\tilde{M}h^{1+\theta} + \frac{1}{h}\bigg\|\int_\Omega \partial_t^3 u_h \cdot x^\perp  dx\bigg\|_{L^\infty(0,T'(h))}\\
	&\quad  + 2R \max_{\sigma = 0,1,2} \bigg\|\frac{1}{h} \int_\Omega\partial_t^{1+\sigma} u_h \cdot x^\perp dx\bigg\|_{L^\infty(0,T'(h))}\bigg) + \max_{\sigma = 0,1,2} \bigg\|\frac{1}{h} \int_\Omega \partial_t^{1 + \sigma} u_h \cdot x^\perp dx\bigg\|_{C^0([0,T'(h)])}\\
	& \leq C_{max} e^{C \tilde{M}h_0^\theta}\bigg(\tilde{M}h^{1+\theta} + 4 \max_{\sigma = 0,1,2} \bigg\|\frac{1}{h} \int_\Omega \partial_t^{1 + \sigma} u_h \cdot x^\perp dx\bigg\|_{C^0([0,T'(h)])}\bigg),
	\end{align*}
	where we used  $(\partial_{x_1} w, x^\perp)_{L^2(\Omega)} = 0$ and note that $\partial_t^2 \partial_{x_1}^2 u_h$ can be estimated by
	\begin{equation*}
	\|\partial_t^2 \partial_{x_1}^2 u_h\|_{L^2(\Omega)} \leq \|\nabla_h \partial_t^2 \partial_{x_1} u_h\|_{L^2(\Omega)} \leq C \bigg\|\nabla \frac{1}{h}\e_h(\partial_t^2 u_h)\bigg\|_{L^2(\Omega)}
	\end{equation*}
	due to Korn's inequality. 
	Now we want to apply \eqref{InequalityFirstOrderRoation}--\eqref{InequalityThirdOrderRoation} in order to bound the rotational part of $u_h$. It follows for $\sigma\in\{0,1,2\}$
	\begin{align*}
		&\bigg\|\frac{1}{h}  (\partial_t^{1+\sigma} u_h, x^\perp)_{L^2}\bigg\|_{C^0([0,T'(h)])} \leq \bigg\|\frac{1}{h} (u_{1,h}, x^\perp)_{L^2}\bigg\|_{C^0([0,T'(h)])} + \bigg\|\frac{1}{h^2} (\partial_t u_h^\perp, g^h)_{L^2}\bigg\|_{C^0([0,T'(h)])}\\
		& \qquad  + \bigg\|\frac{1}{h} (\partial_t u_h^\perp, \partial_t^2 u_h)_{L^2}\bigg\|_{C^0([0,T'(h)])} + \max_{k=0,1} \bigg(\bigg\|\frac{1}{h} (x^\perp, \partial_t^k g^h)_{L^2}\bigg\|_{C^0([0,T'(h)])} \\
		& \qquad + \bigg\|\frac{1}{h^2} (u_h^\perp, \partial_t^k g^h)_{L^2}\bigg\|_{C^0([0,T'(h)])} + \bigg\|\frac{1}{h^2} (u_h^\perp, \partial_t^{2+k} u_h)_{L^2}\bigg\|_{C^0([0,T'(h)])}\bigg)	
	\end{align*}
	Due to the assumptions on initial values $u_{1,h}$ and the external force $f^h$, we obtain that 
	\begin{equation*}
		\bigg|\frac{1}{h} (u_{1, h}, x^\perp)_{L^2(\Omega)}\bigg| \leq \tilde{M} h^{1+\theta}
	\end{equation*}
	and 
	\begin{equation*}
		\max_{\sigma=0,1} \bigg\|\frac{1}{h} (x^\perp, \partial_t^\sigma g^h)_{L^2}\bigg\|_{C^0([0,T'(h)])} \leq \tilde{M} h^{1+\theta}.
	\end{equation*}
	Moreover for $\sigma \in \{0,1\}$ we deduce with the Cauchy-Schwarz and Poincaré inequality 
	\begin{align*}
		\bigg\|\frac{1}{h^2} (u_h^\perp, \partial_t^\sigma g^h)_{L^2}\bigg\|_{C^0([0,T'(h)])} & \leq \frac{1}{h^2} \|u_h\|_{C^0([0,T'(h)];L^2)} \|\partial_t^\sigma g^h\|_{C^0([0,T'(h)]; L^2)}\\
		& \leq \frac{C_p}{h^2} \|\nabla_h u_h\|_{C^0([0,T'(h)];L^2)} \|\partial_t^\sigma g^h\|_{C^0([0,T'(h)];L^2)} \leq C_p \tilde{M} h^{1+\theta}
	\end{align*}
	as $h^{\theta - 1} \leq 1$ for $h\in (0,1]$. Similarly we obtain
	\begin{equation*}
		\bigg\|\frac{1}{h^2} (\partial_t u_h^\perp, \partial_t^\sigma g^h)_{L^2}\bigg\|_{C^0([0,T'(h)])} \leq C_p \tilde{M} h^{1+\theta}
	\end{equation*}
	and
	\begin{align*}
		\bigg\|\frac{1}{h^2} (u_h^\perp, \partial_t^{2+\sigma} u_h)_{L^2}\bigg\|_{C^0([0,T'(h)])} &\leq \frac{C_p^2}{h^2} \|\nabla_h u_h\|_{C^0([0,T'(h)]; L^2)} \|\nabla_h\partial_t^{2+\sigma} u_h\|_{C^0([0,T'(h)];L^2)}\\
		& \leq C_p^2 \tilde{M} h^{1+\theta}.
	\end{align*}
	Altogether this leads to
	\begin{equation*}
		\max_{\sigma = 0,1,2} \bigg\|\frac{1}{h}  (\partial_t^{1+\sigma} u_h, x^\perp)_{L^2}\bigg\|_{C^0([0,T'(h)])} \leq 6\tilde{M}h^{1+\theta}.
	\end{equation*}
	Thus we have shown
	\begin{align}
		\max_{\substack{|\alpha| = 1, |\beta|\leq 2, |\gamma|\leq 1\\\sigma=0,1,2}}& \bigg(\bigg\|\bigg(\partial_t^2\partial_t^\sigma u_h, \nabla_{x,t}^\beta\frac{1}{h}\e_h(\partial_z^\alpha u_h), \nabla_{x,t}^\gamma\nabla_h^2 \partial_z^\alpha u_h\bigg)\bigg\|_{C^0([0,T'(h)];L^2)} \label{localInequalityHigherDerivatives} \\
		& \hspace{1cm}+ \bigg\|\frac{1}{h} \int_\Omega \partial_t^{1 + \sigma} u_h \cdot x^\perp dx\bigg\|_{C^0([0,T'(h)])}\bigg)\leq 25 C_{max} e^{C \tilde{M}h_0^\theta} \tilde{M}h^{1+\theta}.\notag
	\end{align}
	Moreover, exploiting 
	\begin{equation*}
		u_h = u_{0,h} + \int_0^t \partial_t u_h ds\quad\text{ and }\quad \nabla_h u_h = \nabla_h u_{0,h} + \int_0^t \nabla_h w_h^0 ds
	\end{equation*}
	we obtain with \eqref{AssumptionOnInitialDataU2} 
	\begin{align*}
	\|\nabla_h^2 u_h\|_{C^0([0,T'(h)]; L^2)} &\leq \|\nabla_h^2 u_{0,h}\|_{L^2} + T \|\nabla_h^2 \partial_t u_h\|_{C^0([0,T'(h)]; L^2)}\\
	& \leq Mh^{1+\theta} + 25 C_{max} e^{C \tilde{M}h_0^\theta} \tilde{M}h^{1+\theta}
	\end{align*} 
	and 
	\begin{align*}
	&\bigg\|\frac{1}{h} \e_h(u_h)\bigg\|_{C^0([0,T'(h)], L^2)} + \bigg\|\frac{1}{h} \int_\Omega u_h\cdot x^\perp dx\bigg\|_{C^0([0,T'(h)])}\\
          &\quad \leq \bigg\|\frac{1}{h} \e_h(u_{0,h})\bigg\|_{L^2} + T\bigg\|\frac{1}{h} \e_h(\partial_t u_h)\bigg\|_{C^0([0,T'(h)], L^2)}  + \bigg|\frac{1}{h} \int_\Omega u_{0,h}\cdot x^\perp dx\bigg|\\
          &\qquad + T \bigg\|\frac{1}{h} \int_\Omega \partial_t u_{0,h}\cdot x^\perp dx\bigg\|_{C^0([0,T'(h)])} \leq   2Mh^{1+\theta} + 50 C_{max} e^{C \tilde{M}h_0^\theta} \tilde{M}h^{1+\theta}
	\end{align*}
	due to \eqref{AssumptionOnInitialDataU1}, \eqref{AssumptionOnInitialDataU3} and \eqref{localInequalityHigherDerivatives}. Hence we deduce
	\begin{align}
	&\max_{\substack{|\alpha| \leq 1, |\beta|\leq 2, |\gamma|\leq 1\\\sigma=0,1,2}} \bigg(\bigg\|\bigg(\partial_t^2\partial_t^\sigma u_h, \nabla_{x,t}^\beta\frac{1}{h}\e_h(\partial_z^\alpha u_h), \nabla_{x,t}^\gamma\nabla_h^2 \partial_z^\alpha u_h\bigg)\bigg\|_{C^0([0,T'(h)]; L^2)}\notag \\
	&\hspace{3cm}+ \bigg\|\frac{1}{h} \int_\Omega \partial_z^{\alpha + \beta} u_h \cdot x^\perp dx\bigg\|_{C^0([0,T'(h)])}\bigg) \leq 103 C_{max} e^{C \tilde{M}h_0^\theta} \tilde{M}h^{1+\theta}. \label{localInequaltiyFinalZeroMeanValue}
	\end{align}
	As $\theta > 0$ we can find $h_0>0$ such that 
	\begin{align*}
		&\max_{\substack{|\alpha| \leq 1, |\beta|\leq 2, |\gamma|\leq 1\\\sigma=0,1,2}} \bigg(\bigg\|\bigg(\partial_t^2\partial_t^\sigma u_h, \nabla_{x,t}^\beta\frac{1}{h}\e_h(\partial_z^\alpha u_h), \nabla_{x,t}^\gamma\nabla_h^2 \partial_z^\alpha u_h\bigg)\bigg\|_{C^0([0,T'(h)]; L^2)}\\
		&\hspace{3cm}+ \bigg\|\frac{1}{h} \int_\Omega \partial_z^{\alpha + \beta} u_h \cdot x^\perp dx\bigg\|_{C^0([0,T'(h)])}\bigg) \leq 104 C_{max}\tilde{M}h^{1+\theta}. 
	\end{align*}
	uniformly in $h\in(0,h_0]$. 
	
	Now we have to consider the case that the force or the initial data is not mean value free. In this case we define
	\begin{equation}
		a(t) := \frac1{|\Omega|}\left(\int_\Omega u_{0,h}(t) dx - t\int_\Omega u_{1,h} (t) - \int_0^t (t-s) \int_\Omega f^h(s)dx ds\right).
		\label{GlobalReductionToMeanValueFreeDataDefinitionOfA}
	\end{equation}
	Then $\tilde{u}_h(t) := u_h(t) - a(t)$ solves
	\begin{align*}
		\partial_t^2 \tilde{u}_h - \frac{1}{h^2}\divh D\tilde{W}(\nabla_h \tilde{u}_h) &= h^{1+\theta} \tilde{f}_h \quad \text{in } \Omega\times [0, T),\\
		D\tilde{W}(\nabla_h \tilde{u}_h)\nu|_{(0, L)\times \partial S} &= 0,\\
		\tilde{u}_h \text{ is $L$-periodic} &\text{ w.r.t. } x_1, \\
		(\tilde{u}_h, \partial_t \tilde{u}_h)|_{t=0} &= (\tilde{u}_{0,h}, \tilde{u}_{1,h}),
	\end{align*}
	where we subtracted from $(f, u_{0,h}, u_{1,h})$ their mean values to obtain $(\tilde{f}, \tilde{u}_{0,h}, \tilde{u}_{1,h})$. 
	
	Then it holds for $\tilde{u}$
	\begin{equation*}
		\int_\Omega \tilde{u}_h(t) dx = \int_\Omega (u_h(t) - a(t))dx = 0
	\end{equation*}
	as, integration of the nonlinear equation \eqref{NLS1} implies with the boundary and periodicity condition, \eqref{NLS2} and \eqref{NLS3}, respectively
	\begin{equation*}
		\partial_t^2 \int_\Omega u(x,t) dx = \int_\Omega f^h(x,t) dx.
	\end{equation*}
	Moreover \eqref{AssumptionOnInitialDataU3} and \eqref{AssumptionsOnRotationOfNonlinearf} is fulfilled for $\tilde{u}_{k,h}$, $k\in\{0,1,2,3\}$ and $\tilde{f}$, because $\int_\Omega x^\perp dx = 0$. Deploying the fact that the initial data is only changed by a constant, \eqref{AssumptionOnInitialDataU2} holds for the new initial values. With $L^2(\Omega) \hookrightarrow L^1(\Omega)$ and triangle inequality it follows \eqref{AssumptionsOnNonlinearf} with $\tilde{C}M$ instead of $M$, for some $\tilde{C}\geq 1$ independent of $h_0$, $h$ and $M$. In the same way one can deal with $\tilde{u}_{2+k,h}$ in \eqref{AssumptionsOnU1}. Hence, as for \eqref{AssumptionOnInitialDataU2}, we obtain that \eqref{AssumptionOnInitialDataU1} holds with $M$ replaced by $\tilde{C}M$. Thus we can apply \eqref{localInequaltiyFinalZeroMeanValue} for $\tilde{u}_h$ and get
	\begin{align*}
		&\max_{\substack{|\alpha| \leq 1, |\beta|\leq 2, |\gamma|\leq 1\\\sigma=0,1,2}} \bigg(\bigg\|\bigg(\partial_t^2\partial_t^\sigma \tilde{u}_h, \nabla_{x,t}^\beta\frac{1}{h}\e_h(\partial_z^\alpha \tilde{u}_h), \nabla_{x,t}^\gamma\nabla_h^2 \partial_z^\alpha \tilde{u}_h\bigg)\bigg\|_{C^0([0,T'(h)]; L^2)}\\
		&\hspace{3cm}+ \bigg\|\frac{1}{h} \int_\Omega \partial_z^{\alpha + \beta} \tilde{u}_h \cdot x^\perp dx\bigg\|_{C^0([0,T'(h)])}\bigg) \leq 103 C_{max}\tilde{C} e^{C \tilde{C}\tilde{M}h_0^\theta} \tilde{M}h^{1+\theta}.
	\end{align*}
	From the definition of $\tilde{u}_h$ it follows 
	\begin{equation*}
		\nabla_{x,t}^\beta\frac{1}{h}\e_h(\partial_z^\alpha \tilde{u}_h) = \nabla_{x,t}^\beta\frac{1}{h}\e_h(\partial_z^\alpha u_h) 
		\quad \text{ and }\quad \nabla_{x,t}^\gamma\nabla_h^2 \partial_z^\alpha \tilde{u}_h = \nabla_{x,t}^\gamma\nabla_h^2 \partial_z^\alpha u_h.
	\end{equation*}
	Because of $\int_\Omega x^\perp dx = 0$, we have
	\begin{equation*}
		\frac{1}{h} \int_\Omega \partial_z^{\alpha + \beta} \tilde{u}_h \cdot x^\perp dx = \frac{1}{h} \int_\Omega \partial_z^{\alpha + \beta} u_h \cdot x^\perp dx.
	\end{equation*}
	Lastly for $\sigma\in\{0,1,2\}$ we deduce from $\partial_t^{2+\sigma} a(t) = h^{1+\theta} \int_\Omega \partial_t^{\sigma} f^h dx$ and \eqref{AssumptionsOnRotationOfNonlinearf} that
	\begin{align*}
		\max_{\sigma = 0,1,2} &\|\partial_t^{2+\theta} u_h\|_{C^0([0,T'(h)]; L^2)}\\
		&\leq \max_{\sigma = 0,1,2}\|\partial_t^{2+\sigma} \tilde{u}_h\|_{C^0([0,T'(h)];L^2)} + \max_{\sigma = 0,1,2}\|\partial_t^{2+\sigma} a(t)\|_{C^0([0,T'(h)];L^2)}\\
		& \leq  \max_{\sigma = 0,1,2}\|\partial_t^{2+\sigma} \tilde{u}_h\|_{C^0([0,T'(h)];L^2)} + Mh^{1+\theta}
	\end{align*}
	Thus it follows 
	\begin{align*}
		&\max_{\substack{|\alpha| \leq 1, |\beta|\leq 2, |\gamma|\leq 1\\\sigma=0,1,2}} \bigg(\bigg\|\bigg(\partial_t^2\partial_t^\sigma u_h, \nabla_{x,t}^\beta\frac{1}{h}\e_h(\partial_z^\alpha u_h), \nabla_{x,t}^\gamma\nabla_h^2 \partial_z^\alpha u_h\bigg)\bigg\|_{C^0([0,T'(h)]; L^2)} \\
		&\hspace{3cm}+ \bigg\|\frac{1}{h} \int_\Omega \partial_z^{\alpha + \beta} u_h \cdot x^\perp dx\bigg\|_{C^0([0,T'(h)])}\bigg) \leq 104 C_{max}\tilde{C} e^{C \tilde{C}\tilde{M}h_0^\theta} \tilde{M}h^{1+\theta}\\
		&\qquad\leq 105 C_{max} \tilde{M} h^{1+\theta}
	\end{align*}
	for $h_0$ small.
\end{proof}
\appendix
\section{Appendix: Existence of classical solutions for fixed $h>0$}
\label{Appendix::ExistenceOfClassicalSolutionsForFixedH}
In this appendix we want to give more details on how the existence result of \cite{Koch} is applied in the regarded situation. First we shortly summaries the assumptions and equation considered in \cite{Koch} and the main result \cite[Theorem 1.1]{Koch}, which we want to apply. Second we give some remarks on how our system is obtained and why the assumptions assumed in Theorem \ref{MainTheorem} are sufficient.

In \cite{Koch} a quasi-linear hyperbolic equation of the form 
\begin{align}
\sum_{i=0}^{n} \partial_{x_i} F^i_j(t,x,u,Du) &= w_j(t,x,u,Du) &&\hspace{-1cm}\text{in } \Omega\times(0,T) \label{globalSystemKoch1}\\
\sum_{i=0}^{n} \nu_i F^i_j(t,x,u,Du) &= g_j &&\hspace{-1cm}\text{on } \partial\Omega\times(0,T) \label{globalSystemKoch2}\\
(u|_{t=0},\partial_t u|_{t=0}) &= (u_0, u_1)&&\hspace{-1cm}\text{in } \Omega \label{globalSystemKoch3}
\end{align}
is considered, where $1 \leq j \leq N$, $x_0 = t$, $\Omega\subset\R^n$ is a bounded domain with boundary of class $C^{s+2}$, $s>n/2 + 1$ and $\nu$ is the outer normal. Moreover $u\colon \Omega\times [0,T)\to\R^n$ and $Du = (\partial_t u, \partial_{x_1} u, \ldots, \partial_{x_n} u)$, with $0 < T \leq \infty$. For convenience we state the assumptions made in \cite{Koch} in a slightly simplified version. 
\begin{itemize}
	\myitem[A1]\label{itm:A1} {We assume that $u_0\in H^{s+1}(\Omega)$, $u_1\in H^s(\Omega)$ and let $U$ be an open neighbourhood of $\{0\}\times\operatorname{graph}((u_0, u_1, D_x u_0))$ in $[0,T)\times\overline{\Omega}\times\R^N\times\R^N\times\R^{N\times n}$. Moreover assume $F$, $g\in C^{s+1}(U)$, $w\in C^s(U)$ and define
		\begin{equation*}	
		a^{ik}_{jl} = \frac{\partial F^i_j}{\partial(\partial_{x_k} u^l)}(t, x, u, Du) \text{ for all } 0\leq i,k\leq n, 1 \leq l,k\leq N.
		\end{equation*}}
	\myitem[A2]\label{itm:A2} {$a^{ik}_{jl} = a^{ki}_{lj}$ in $U$ for all $0\leq i, k\leq n$, $1 \leq l,k\leq N$.}
	\myitem[A3]\label{itm:A3} {For any $t\in [0,T)$, $v_0\in C^1(\overline{\Omega})$ and $v_1\in C(\overline{\Omega})$ with $\{t\}\times\operatorname{graph}((v_0, v_1, D_x v_0))\subset U$ there exists $\kappa_0 > 0$ and $\mu \geq 0$ such that for all $\psi\in C^\infty(\overline{\Omega})$ the inequality
		\begin{equation*}
		\sum_{\alpha,\beta=1}^{n} \sum_{j,l=1}^{N} (a^{\alpha\beta}_{jl} \partial_{x_\beta}\psi_j, \partial_{x_\alpha}\psi_l)_{L^2(\Omega)} \geq \kappa_0 \|\psi\|^2_{H^1(\Omega)} - \mu \|\psi\|^2_{L^2(\Omega)}.
		\end{equation*}}
	\myitem[A4]\label{itm:A4} {For any $\xi\in U$ there exists a $\kappa > 0$ such that for all $\eta\in\R^N$ the inequality
		\begin{equation*}
		a^{00}_{jl} (\xi)\eta_j \eta_l \leq -\kappa_1 |\eta|^2
		\end{equation*}
		holds.}
	\myitem[A5]\label{itm:A5} {We suppose that the compatibility condition holds up to order $s$.}
\end{itemize}
Under these assumptions the following theorem holds:
\begin{theorem}[Theorem 1.1, \cite{Koch}]~\\
	There exists a unique $0 < t_0 \leq T$ and a unique classical solution $u\in C^2([0,t_0)\times \overline{\Omega})$ of \eqref{globalSystemKoch1}--\eqref{globalSystemKoch3} such that $D^\sigma u(t)\in L^2(\Omega)$ for $0\leq \sigma\leq s+1$. Moreover $t_0$ is characterised by the two alternatives: either the graph of $(u, Du)$ is not precompact in $U$ or 
	\begin{equation*}
	\int_0^t \|D^2 u(\tau)\|_{L^\infty(\Omega)} d\tau \to \infty\quad \text{ for } t\to t_0.
	\end{equation*}	
\end{theorem}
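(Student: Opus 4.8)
The plan is to prove this local existence and blow-up result by the classical energy method for quasilinear hyperbolic systems, adapted to the natural (Neumann-type) boundary condition \eqref{globalSystemKoch2}; this is precisely the content of \cite{Koch}, and I describe the structure of the argument. First I would set up an iteration: given the data $(u_0,u_1)$ and an approximation $v$ with $(v,Dv)$ valued in $U$, let $u$ solve the \emph{linear} hyperbolic problem obtained by freezing the coefficients $a^{ik}_{jl}=a^{ik}_{jl}(t,x,v,Dv)$ and linearising $F$, $g$, $w$, with the same initial data. By (A2) the principal part is symmetric, by (A4) the matrix $a^{00}$ is uniformly negative definite so the problem is genuinely hyperbolic and second order in $t$, and by (A3) the spatial part satisfies a G{\aa}rding inequality; standard linear hyperbolic theory (energy estimates together with a Galerkin or semigroup construction) then yields a unique solution $u$ with $D^\sigma u(t)\in L^2(\Omega)$ for $0\le\sigma\le s+1$.

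The heart of the proof is a uniform energy estimate for this linear problem. Testing with $\partial_t u$ and integrating by parts, the linearised boundary condition cancels the surface term at top order, so no uncontrolled boundary contribution appears; using (A3)--(A4) and Gronwall one controls $\|\partial_t u(t)\|_{L^2}^2+\|\nabla u(t)\|_{L^2}^2$ in terms of the data. To propagate $H^{s+1}$ regularity one differentiates the equation in $t$ and in directions tangential to $\partial\Omega$ (these commute, up to lower-order terms, with the operator $\nu\cdot F$, hence preserve the boundary condition), estimates the derivatives by the same identity, and recovers the remaining normal derivatives from the equation itself --- i.e.\ from coercivity of the frozen spatial operator at each fixed $t$ (elliptic regularity up to the boundary, using $\partial\Omega\in C^{s+2}$). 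The compatibility conditions (A5) guarantee that the formal time-Taylor coefficients $\partial_t^k u|_{t=0}$ computed from the equation lie in $H^{s+1-k}(\Omega)$ and are compatible with \eqref{globalSystemKoch2}, so the scheme can be started without loss of derivatives. Using tame (Moser-type) product and composition estimates one then shows that, on a time interval $[0,t_0]$ whose length depends only on the size of the data and on the distance of $\{0\}\times\mathrm{graph}(u_0,u_1,D_x u_0)$ to $\partial U$, the iterates stay in a fixed ball of the energy space with $(v,Dv)$ in a compact subset of $U$.

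From here the argument is routine: the iteration map is a contraction in a lower-order norm (the $L^2$-based energy norm without derivatives), so the iterates converge; interpolation with the uniform high-norm bound upgrades the limit to a solution in the stated class, and since $s>n/2+1$ Sobolev embedding gives $u\in C^2([0,t_0)\times\overline\Omega)$. Uniqueness follows from the same difference estimate. For the blow-up alternative one reruns the nonlinear a priori estimate: as long as $(u,Du)$ remains in a compact subset of $U$ and $\int_0^t\|D^2u(\tau)\|_{L^\infty}\,d\tau$ stays finite, the Moser estimates keep the coefficients $a^{ik}_{jl}(\cdot,u,Du)$ and their first derivatives bounded and the $H^{s+1}$-norm cannot blow up, so the solution extends past $t_0$; hence at the maximal time one of the two stated alternatives must hold.

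The main obstacle is the boundary analysis. One has to verify that tangential and time differentiation stays compatible with the nonlinear natural condition \eqref{globalSystemKoch2}, that the G{\aa}rding-type coercivity (A3) localizes correctly near $\partial\Omega$ (partition of unity, flattening of the boundary, freezing of coefficients), and above all that the \emph{normal} derivatives can genuinely be recovered from the interior equation --- this is where the coercivity/Legendre--Hadamard structure encoded in (A3) and the $C^{s+2}$ smoothness of $\partial\Omega$ are really used. The second subtlety is keeping all estimates uniform along the iteration, which forces the systematic use of tame estimates so that the substitution $v\mapsto a^{ik}_{jl}(\cdot,v,Dv)$ does not cost derivatives.
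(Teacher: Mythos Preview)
The paper does not prove this theorem at all: it is quoted verbatim as Theorem~1.1 of \cite{Koch} and used as a black box, with the appendix only explaining how the system \eqref{NLS1}--\eqref{NLS4} fits into the framework (A1)--(A5) after passing to the manifold $M=(\R/L\mathbb{Z})\times S$. So there is no ``paper's own proof'' to compare against.

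That said, your sketch is a faithful outline of the standard strategy and, as far as one can tell from the discussion in the present paper, of the argument in \cite{Koch}: symmetry (A2) and the sign condition (A4) give a genuine second-order hyperbolic structure, the G{\aa}rding condition (A3) furnishes both the basic energy inequality and the elliptic step for recovering normal derivatives, tangential/time differentiation together with the compatibility conditions (A5) propagates $H^{s+1}$ regularity, Moser-type estimates close the iteration, and the blow-up alternative follows from rerunning the a~priori bound. The paper itself uses exactly this philosophy in Section~\ref{subsec::UniformEstimatesForLinearisedSystem} (differentiate in $t$ and $x_1$, then solve an elliptic problem on the cross section to recover the $x'$-derivatives), which is consistent with your description. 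There is nothing to correct; just be aware that in the context of this paper the result is cited, not reproved.
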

In the situation this paper the considered domain $\Omega$ is not sufficiently smooth, but due to the periodic boundary condition on the end faces of $\Omega$ the equations \eqref{NLS1}--\eqref{NLS4} are equivalent to the equations on the manifold $M := (\R/L\mathbb{Z}) \times S$. This is a bounded manifold with smooth boundary, as $S$ is a $C^\infty$ domain. The ideas of \cite{Koch} are similar as in Subsection \ref{subsec::UniformEstimatesForLinearisedSystem}, using differentiation in time and applying results from the elliptic theory. 

Choosing $n,N=3$, $g_j \equiv 0$, $w_j (t,x,u,Du) = -h^{1+\theta} (f_h)_j$ and
\begin{equation*}
F^0_j(t,x,u,Du) = -\partial_t u_j, \quad F^i_j (t,x,u,Du) = \frac{1}{h^2}(D\tilde{W}(\nabla_h u))_{j,i} = \frac{1}{h^2} \bigg(\frac{\partial W}{\partial(\partial_{x_i} u_j)}\bigg) (\nabla_h u)
\end{equation*}
for $i,j=1,2,3$. Then we obtain the symmetry condition \ref{itm:A2}. As $(a^{00}_{jl})_{j,l=1,2,3} = - Id\in\Rtimes$ the assumption \ref{itm:A4} is fulfilled, with $\kappa_1 = 1$. Moreover the compatability assumptions of Theorem \ref{MainTheorem} imply \ref{itm:A5}. For the first assumption we choose $s=3$. Then the initial data is sufficiently regular and as $f^h$ does not depend on $(u,Du)$ the prescribed regularity is sufficient. Lastly we can choose $U$ as
\begin{equation*}
U = [0,T)\times\overline{\Omega}\times\R^3\times\R^3\times U_h\quad\text{where } U_h := \bigg\{A\in\Rtimes\;\:\;  \bigg|\bigg(A, \frac{1}{h} \operatorname{sym}(A)\bigg)\bigg| \leq \e h \bigg\} 
\end{equation*}
for some sufficiently small $\e$. This is indeed an applicable neighbourhood as for small $h>0$, it holds $\nabla_h u_{0,h}(x) \in U_h$ for all $x\in\overline{\Omega}$, as $H^2(\Omega) \hookrightarrow C^0(\overline{\Omega})$. Finally due to Lemma \ref{Lemma_DecompD3TildeW}, \eqref{globalCentralCoerciveEstimateForAppendix} and Korn's inequality we obtain that the coerciveness assumption \ref{itm:A3} is satisfied.
\begin{bem}
	We want to give a short remark, why the graph of the solution $u_h$ is precompact in $U_h$ for all $h>0$ as long as \eqref{localInequalityForU} holds. First we note that the neighbourhood $U$ lies in a finite dimensional space. Thus $\mathcal{G}(u,Du) := \{(x,t,u,\partial_t, \nabla u) \;:\; (x,t)\in \overline{\Omega}\times [0,T'(h)]\}$ is precompact if and only if $\{(x,t,u,\partial_t, \nabla u) \;:\; (x,t)\in \overline{\Omega}\times [0,T'(h)]\}$ is bounded and the closure satisfies $\overline{\mathcal{G}(u,Du)} \subset U$. Due to the regularity of $u_h$, it follows that $\mathcal{G}(u_h,Du_h)$ is bounded and for $h_0>0$ sufficiently small we have 
	\begin{equation*}
	\operatorname{dist}(\{\nabla_h u_h(x,t)\;:\; (x,t)\in \overline{\Omega}\times [0,T'(h)]\}, \partial U_h) \geq \e > 0
	\end{equation*}
	for some uniformly $\e>0$. Hence, the graph of $(u_h, Du_h)$ is precompact in $U$.
\end{bem}


\begin{thebibliography}{10}

\bibitem{AbelsMoraMueller}
H.~Abels, M.~G. Mora, and S.~M\"{u}ller.
\newblock Large time existence for thin vibrating plates.
\newblock {\em Comm. Partial Differential Equations}, 36(12):2062--2102, 2011.

\bibitem{AbelsMoraMuellerConvergence}
H.~Abels, M.~G. Mora, and S.~M\"{u}ller.
\newblock The time-dependent von {K}\'{a}rm\'{a}n plate equation as a limit of
  3d nonlinear elasticity.
\newblock {\em Calc. Var. Partial Differential Equations}, 41(1-2):241--259,
  2011.

\bibitem{AmeismeierDiss}
T.~Ameismeier.
\newblock Thin vibrating rods: {\ensuremath{\Gamma}}-convergence, large time
  existence and first order asymptotics.
\newblock 2021.

\bibitem{Antman}
S.~S. Antman.
\newblock {\em Nonlinear Problems of Elasticity}, volume 107.
\newblock Springer-Verlag, Berlin-New York, second edition, 2005.

\bibitem{GiaquintaMartinazzi}
M.~Giaquinta and L.~Martinazzi.
\newblock {\em An introduction to the regularity theory for elliptic systems,
  harmonic maps and minimal graphs}, volume~11.
\newblock Edizioni della Normale, Pisa, second edition, 2012.

\bibitem{Griso}
G.~Griso.
\newblock Decomposition of rods deformations. {A}symptotic behavior of
  nonlinear elastic rods.
\newblock In {\em Multiscale problems}, volume~16 of {\em Ser. Contemp. Appl.
  Math. CAM}. Higher Ed. Press, Beijing, 2011.

\bibitem{Koch}
H.~Koch.
\newblock Mixed problems for fully nonlinear hyperbolic equations.
\newblock {\em Math. Z.}, 214(1):9--42, 1993.

\bibitem{MuellerLecumberry}
M.~Lecumberry and S.~M\"{u}ller.
\newblock Stability of slender bodies under compression and validity of the von
  {K}\'{a}rm\'{a}n theory.
\newblock {\em Arch. Ration. Mech. Anal.}, 193(2):255--310, 2009.

\bibitem{LionsMagenes}
J.-L. Lions and E.~Magenes.
\newblock {\em Non-homogeneous boundary value problems and applications. {V}ol.
  {I-III}}.
\newblock Springer-Verlag, New York-Heidelberg, 1972.
\newblock Translated from the French by P. Kenneth, Die Grundlehren der
  mathematischen Wissenschaften, Band 181.

\bibitem{McLean}
W.~McLean.
\newblock {\em Strongly elliptic systems and boundary integral equations}.
\newblock Cambridge University Press, Cambridge, 2000.

\bibitem{MoraMueller2003}
M.~G. Mora and S.~M\"{u}ller.
\newblock Derivation of the nonlinear bending-torsion theory for inextensible
  rods by {$\Gamma$}-convergence.
\newblock {\em Calc. Var. Partial Differential Equations}, 18(3):287--305,
  2003.

\bibitem{MoraMueller2004}
M.~G. Mora and S.~M\"{u}ller.
\newblock A nonlinear model for inextensible rods as a low energy
  {$\Gamma$}-limit of three-dimensional nonlinear elasticity.
\newblock {\em Ann. Inst. H. Poincar\'{e} Anal. Non Lin\'{e}aire},
  21(3):271--293, 2004.

\bibitem{QinYao}
Y.~Qin and P.-F. Yao.
\newblock The {T}ime-{D}ependent {V}on {K}\'{a}rm\'{a}n {S}hell {E}quation as a
  {L}imit of {T}hree-{D}imensional {N}onlinear {E}lasticity.
\newblock {\em J. Syst. Sci. Complex.}, 34(2):465--482, 2021.

\bibitem{Scardia2006}
L.~Scardia.
\newblock The nonlinear bending-torsion theory for curved rods as
  {$\Gamma$}-limit of three-dimensional elasticity.
\newblock {\em Asymptot. Anal.}, 47(3-4):317--343, 2006.

\bibitem{Scardia2009}
L.~Scardia.
\newblock Asymptotic models for curved rods derived from nonlinear elasticity
  by {$\Gamma$}-convergence.
\newblock {\em Proc. Roy. Soc. Edinburgh Sect. A}, 139(5):1037--1070, 2009.

\bibitem{WlokaPDE}
J.~Wloka.
\newblock {\em Partial differential equations}.
\newblock Cambridge University Press, Cambridge, 1987.

\end{thebibliography}

\end{document}